\documentclass[a4paper,10pt]{article} 
\newif\ifsiamart
\makeatletter%
\@ifclassloaded{siamart220329}%
  {\siamarttrue}%
  {\siamartfalse}%
\makeatother%

\ifsiamart\else
\usepackage{authblk}
\fi

\usepackage{silence}
\WarningFilter{latex}{Overwriting file}
\usepackage[utf8]{inputenc}
\usepackage[margin=0.85in]{geometry}
\usepackage{xcolor}
\usepackage{graphicx}
\usepackage{bm}
\usepackage{bbm}
\usepackage{amsmath}
\usepackage{amssymb}
\usepackage{stmaryrd}
\usepackage{amsfonts}
\usepackage{mathrsfs}
\usepackage{mathtools}
\usepackage{xparse}
\usepackage{epstopdf}
\usepackage{enumitem}
\usepackage{comment}

\ifsiamart\else
\usepackage{amsthm}
\usepackage{hyperref}
\hypersetup{%
    linktocpage=true,
    colorlinks=true,
    citecolor=red,
    linkcolor=blue,
    urlcolor=darkgreen,
}
\usepackage[nameinlink,capitalise]{cleveref}
\newcommand{\email}[1]{\href{mailto:#1}{#1}}
\newenvironment{keywords}{\small \noindent\begin{quote}\textbf{Keywords.}}{\end{quote}}
\newenvironment{AMS}{\vspace{.2cm}\small \noindent \begin{quote}\textbf{AMS subject classification.}}{\end{quote}}
\fi

\definecolor{darkred}{rgb}{.7,0,0}
\definecolor{darkgreen}{rgb}{.1,.7,0}

\usepackage{setspace}

\ifsiamart\else
\fi

\DeclarePairedDelimiter\matnorm{\lVert}{\rVert}
\DeclarePairedDelimiter\vecnorm{\lvert}{\rvert}
\DeclarePairedDelimiter\abs{\lvert}{\rvert}
\DeclarePairedDelimiter\norm{\lVert}{\rVert}

\DeclareMathOperator{\e}{e}

\DeclareMathOperator*{\trace}{tr}

\DeclareMathOperator{\cov}{\mathcal C}

\newcommand{\kalman}{{\rm EK}}
\newcommand{\kalmanN}{{\rm EK}, N}

\newcommand{\dimu}{d_u}
\newcommand{\dimy}{d_y}
\renewcommand{\t}{\mathsf T}

\newcommand{\kl}[2]{\mathrm {KL} (#1 \| #2)}

\newcommand{\placeholder}{\mathord{\color{black!33}\bullet}}%
\newcommand{\ind}{\perp\!\!\!\!\perp}
\newcommand{\expect}{\mathbb{E}}

\newcommand{\range}[2]{\llbracket #1, #2 \rrbracket}
\newcommand{\mat}{}
\newcommand{\op}{\mathsf}
\newcommand{\normal}{\mathcal{N}}
\newcommand{\nat}{\mathbb N}

\newcommand{\real}{\mathbb R}

\newcommand{\vect}[1]{#1}

\renewcommand{\d}{\mathrm d}

\renewcommand{\leq}{\leqslant}
\renewcommand{\geq}{\geqslant}
\renewcommand{\le}{\leqslant}
\renewcommand{\ge}{\geqslant}

\newcommand{\CC}{\mathcal{C}}

\newcommand{\yd}{{y}^\dag}

\ifsiamart
\newtheorem{remark}[theorem]{Remark}
\newtheorem{assumption}{Assumption}
\renewcommand{\theassumption}{A}
\else
\theoremstyle{plain}

\newtheorem{theorem}{Theorem}[section]
\newtheorem{lemma}[theorem]{Lemma}

\newtheorem{proposition}[theorem]{Proposition}

\newtheorem{remark}[theorem]{Remark}
\newtheorem{definition}[theorem]{Definition}

\newtheorem{assumption}{Assumption}
\renewcommand{\theassumption}{H}
\numberwithin{equation}{section}
\crefname{lemma}{Lemma}{Lemmas}
\crefname{remark}{Remark}{Remarks}
\crefname{assumption}{Assumption}{Assumptions}
\crefname{proposition}{Proposition}{Propositions}
\crefname{equation}{}{}
\Crefname{equation}{Equation}{Equations}
\fi

\crefname{section}{Section}{Sections}
\crefname{subsection}{Subsection}{Subsections}

\usepackage{tikz}
\usepackage{tikz-cd}
\usepackage{pgfplotstable}
\pgfplotsset{compat=1.14}
\usetikzlibrary{patterns}
\usetikzlibrary{calc}
\usetikzlibrary{angles}
\usetikzlibrary{quotes}
\usetikzlibrary{external}

\usepackage[url=true,backend=biber,style=trad-abbrv,url=false,doi=false,isbn=false]{biblatex}
\addbibresource{main.bib}
\DeclareFieldFormat{volume}{volume \textbf{#1}}
\DeclareFieldFormat[article]{volume}{\textbf{#1}}

\ifsiamart\else
\let\oldparagraph=\paragraph
\renewcommand\paragraph[1]{\oldparagraph{#1.}}
\fi

\newlist{assumpenum}{enumerate}{5}
\setlist[assumpenum]{font={\bfseries}}
\crefname{assumpenumi}{Assumption}{Assumptions}

\newlist{itemenum}{enumerate}{5}
\setlist[itemenum]{font={\bfseries}}
\crefname{itemenumi}{Assumption}{Assumptions}

\newcommand{\emp}{\pi^{\kalman,N}}
\newcommand{\empmfl}{\overline \pi^{\kalman,N}}
\newcommand{\mfl}{\overline \pi^{\kalman}}

\makeatletter
\def\th@plain{%
  \thm@notefont{}
  \itshape
}
\def\th@definition{%
  \thm@notefont{}
  \normalfont
}
\makeatother

\title{%
    Accuracy of the Ensemble Kalman Filter in the Near-Linear Setting
}

\ifsiamart
\author{%
    E. Calvello\thanks{%
        Department of Computing and Mathematical Sciences, Caltech, USA  (\email{e.calvello@caltech.edu}).
    } \and
    P. Monmarch\'e\thanks{%
        Laboratoire Jacques-Louis Lions, Sorbonne Universit\'e \& Institut Universitaire de France, France (\email{pierre.monmarche@sorbonne-universite.fr}).
    } \and
    A. M. Stuart\thanks{%
        Department of Computing and Mathematical Sciences, Caltech, USA (\email{astuart@caltech.edu}).
    } \and
    U. Vaes\thanks{%
        MATHERIALS, Inria Paris \& CERMICS, \'Ecole des Ponts, France (\email{urbain.vaes@inria.fr})
    }
}

\else
\author[1]{E. Calvello$^{a,}$}
\author[2,3]{P. Monmarch\'e$^{b,}$}
\author[1]{A. M. Stuart $^{c,}$}
\author[4,5]{U. Vaes $^{d,}$}
\affil[ ]{\footnotesize
    $^a$\email{e.calvello@caltech.edu},
    $^b$\email{pierre.monmarche@sorbonne-universite.fr},
    $^c$\email{astuart@caltech.edu},
    $^d$\email{urbain.vaes@inria.fr}
}
\affil[1]{\footnotesize Department of Computing and Mathematical Sciences, Caltech, USA}
\affil[2]{\footnotesize Laboratoire Jacques-Louis Lions, Sorbonne Université, France}
\affil[3]{\footnotesize Institut Universitaire de France, France}
\affil[4]{\footnotesize MATHERIALS project-team, Inria Paris, France}
\affil[5]{\footnotesize CERMICS, \'Ecole des Ponts, France}
\fi

\begin{document}
\maketitle


\begin{abstract}
The filtering distribution captures the statistics of the state of a dynamical system from partial and noisy observations. Classical particle filters provably approximate this distribution in quite general settings; however they behave poorly for high dimensional problems, suffering weight collapse. This issue is circumvented by the ensemble Kalman filter which is an equal-weight
interacting particle system. However, this finite particle system is only proven to approximate the
true filter in the linear Gaussian case. In practice, however, it is applied in much broader settings; as a result, establishing its approximation properties more generally is important. There has been recent progress in the theoretical analysis of the algorithm, establishing stability and error estimates in non-Gaussian settings, but the assumptions on the dynamics and observation models
rule out the unbounded vector fields that arise in practice and the analysis applies only to the mean field limit of the ensemble Kalman filter. The present work establishes error bounds between the filtering distribution and the finite particle ensemble Kalman filter when the dynamics and observation vector fields may be unbounded, allowing linear growth.
\end{abstract}

\begin{keywords}
    Ensemble Kalman filter, Stochastic filtering, Weighted total variation metric, Stability estimates,
    Accuracy estimates, Near-Linear setting.
\end{keywords}

\begin{AMS}
60G35, 
62F15, 
65C35, 
70F45, 
93E11. 
\end{AMS}

\section{Introduction}
\label{sec:I}

\subsection{Literature Review, Contributions and Outline}
\label{ssec:O}

Algorithms for filtering employ noisy observations arising from a possibly random dynamical system to estimate the distribution of the state of the dynamical system conditional on the observations.
The Kalman filter~\cite{kalman1960new} determines the filtering distribution exactly for linear Gaussian dynamics and observations. The extended Kalman filter was developed as an extension of the Kalman filtering technique to nonlinear problems and is based on a linearization approximation; see \cite{jazwinski2007stochastic} and \cite{anderson2012optimal}. The linearization approximation leads to an inexact distribution for nonlinear problems, and furthermore requires evaluation of covariance matrices, making the methodology impractical for large-scale geophysical applications \cite{ghil}. Particle filters or sequential Monte Carlo methods \cite{doucet2001sequential,chopin:20} offer an alternative methodology for nonlinear filtering problems, allowing recovery of the exact filtering distribution in the large particle limit. However, this class of methods scales poorly with dimension, and in particular suffers
weight collapse, making its application to geophysical problems prohibitive; see for example \cite{snyder2008obstacles,bickel2008sharp,rebeschini2015can,agapiou2017importance}.

The ensemble Kalman filter was introduced in the seminal paper \cite{evensen1994sequential}.
Its success stems from the low-rank approximation of large covariances, cheaply computed using an ensemble of particles, which allows the methodology to be deployed in geophysical applications.
However analysis of
its accuracy, in relation to the true filtering distribution, remains in its infancy. The papers
\cite{le2009large,mandel2011convergence} studied this issue in the linear Gaussian setting where
the mean field limit of the Kalman filter is exact; they demonstrate that the ensemble Kalman filter
may be viewed as an interacting particle system approximation of the mean field limit, and establish Monte Carlo type error bounds. The recent article \cite{calvello2025ensemble} overviews
the formulation of ensemble Kalman methods using mean field dynamical systems and provides
a platform from which the analyses of \cite{le2009large,mandel2011convergence} may be generalized
beyond the linear Gaussian setting. In the recent paper \cite{carrillo2022ensemble} the authors
establish stability properties of the mean field ensemble Kalman filter and use them to
prove accuracy of the filter in a near-Gaussian setting. However, the paper does not consider
finite particle approximations of the mean field, and the conditions on the dynamics-observation model require boundedness of the vector fields arising. In this paper we address both these issues, establishing
error bounds between the finite particle ensemble Kalman filter and the true filtering
distribution in settings where the dynamics and observation vector fields grow linearly.
We make the following contributions.

\begin{enumerate}
    \item \label{contribution:stability}
    \Cref{theorem:main_theorem} is a stability result for the mean field ensemble Kalman filter in the setting of dynamical models and Lipschitz obervation operators that grow at most linearly at infinity.
    \item \label{contribution:mfenkf}
    \cref{thm:error_estimate_mfenkf} quantifies the error between the mean field ensemble Kalman filter and the true filter in the setting of dynamical models and Lipschitz observation operators that are near-linear.
    \item \label{contribution:enkf}
     \cref{theorem:enkf_theorem} quantifies the error between the finite particle ensemble Kalman filter (found as an interacting particle system approximation of the mean field) and the true filter in the setting of near-linear, Lipschitz dynamical models and linear observation operators.
\end{enumerate}

In going beyond the work in \cite{carrillo2022ensemble},
the current paper simultaneously addresses a more applicable problem class,
by allowing linear growth in the dynamics and observation operators,
and confronts the substantial technical challenges which arise from doing so.
Bounds on moments of the filtering distribution and mean field ensemble Kalman filter must be established; these bounds grow in the number of iterations which is in contrast to \cite{carrillo2022ensemble} where the $L^\infty$ bounds on model and observation operator ensure a uniform bound on moments. A further challenge is presented by the need to establish stability bounds for the conditioning map, giving rise to the true filter, and the transport map giving rise to the ensemble Kalman filter. These results exhibit dependence on moments in the stability constants and require control of the growth given by the dynamics; this is again in contrast to \cite{carrillo2022ensemble} where the $L^\infty$ bounds allow for a uniform control.

After discussing notation that will be used throughout the paper in~\cref{ssec:N}, we introduce the filtering problem in~\cref{ssec:FD}. In~\cref{sec:EKF} we outline the main results of the paper concerning the ensemble Kalman filter. In~\cref{ssec:MF} we formulate the ensemble Kalman filter that we consider in this paper along with the relevant assumptions we will use in the analysis. We state a stability theorem for the mean field formulation of the ensemble Kalman filter in~\cref{ssec:SEKF}, hence Contribution \ref{contribution:stability}. We leverage this result in~\cref{ssec:AEKF} to derive a theorem quantifying the error between the mean field ensemble Kalman filter and the true filter, Contribution \ref{contribution:mfenkf}. Finally, in~\cref{ssec:AEnKF}, we make use of the results of the previous subsections to state a theorem quantifying the error between the (finite particle) ensemble Kalman filter itself and the true filter, yielding Contribution \ref{contribution:enkf}. Various technical results, used in the
proof of our three main theorems, may be found in the appendices. We conclude with closing remarks in~\cref{sec:discussion}.

\subsection{Notation}
\label{ssec:N}

We denote by~$\vecnorm{\placeholder}$ the Euclidean norm on $\real^n$,
while the induced operator norm on matrices is denoted by~$\matnorm{\placeholder}$.
For symmetric positive definite matrix $\mat S \in \real^{n \times n}$,
we denote by~$\vecnorm{\placeholder}_{\mat S}$ the weighted norm $\vecnorm{ \mat S^{-1/2} \placeholder }$.
Given a function $f \colon \real^n \to \real$ and $r \geq 0$,
we let $B_{L^{\infty}}(f,r)$ denote the $L^\infty$ ball of radius $r$ centered at~$f$.
Similarly, for functions $f, g$ and $r\geq 0$,
we denote by $B_{L^{\infty}}\bigl((f,g),r\bigr)$ the $L^\infty$ ball of radius $r$ centered at $(f, g)$,
on the product space, i.e. the set of all~$(\mathfrak f, \mathfrak g) \in L^\infty(\real^n) \times L^\infty(\real^d)$ satisfying
\[
    \|\mathfrak f-f\|_{L^\infty(\real^{n})}\leq r,\quad \|\mathfrak g-g\|_{L^\infty(\real^{d})}\leq r.
\] We write $|\placeholder|_{C^{0,1}}$ for the $C^{0,1}$ semi-norm, referring in particular to the Lipschitz constant.

We apply the notation $\ind$ to denote independence of two random variables.
We write $\normal(\vect m, \mat C)$
to denote the Gaussian distribution with mean~$\vect m \in \real^n$ and covariance~$\mat C \in \real^{n \times n}$.
We use $\mathcal P(\real^n)$ to denote the space of probability measures over~$\real^n$,
while we write $\mathcal G(\real^n)$ for the space of Gaussian probability measures over~$\real^n$.
With the exception of empirical measures formed from finite ensembles, this manuscript primarily
deals with probability measures that have a Lebesgue density, because of the assumptions
concerning the noise structure in the dynamics model and the data acquisition model. In this setting
we abuse notation by using the same symbols for probability measures and their densities.
For~$\mu \in \mathcal P(\real^n)$,
the notation $\mu(x)$ for $x \in \real^n$ refers to the Lebesgue density of~$\mu$ evaluated at~$x$,
while~$\mu[f]$ for a function~$f\colon \real^n \to \real$ will be notation for $\int_{\real^n} f \, \d \mu$.
For function $f:\real^n\to\real^m$ and measure $\mu \in \mathcal P(\real^n)$, we denote by $f_\sharp\mu \in \mathcal P(\real^m)$ the measure given by the pushforward, under $f$, of $\mu$.

For a probability measure~$\mu \in \mathcal P(\real^n)$,
the notation $\mathcal{M}_q(\mu)$ denotes the $q$th polynomial moment under the measure $\mu$, defined as
\begin{equation}
\label{eq:mmoments}
\mathcal{M}_q(\mu) \coloneqq \int_{\real^{n}}|x|^q \mu(\d x).
\end{equation}
We denote by~$\mathcal M(\mu)$ and~$\mathcal C(\mu)$ the mean and covariance under $\mu$, respectively:
\[
    \mathcal M(\mu) = \mu[x], \qquad
    \mathcal C(\mu) = \mu \Bigl[\bigl(x - \mathcal M(\mu)\bigr) \otimes \bigl(x - \mathcal M(\mu)\bigr)\Bigr].
\]
We use $\mathcal P_R(\real^n)$ with $R \geq 1$ to denote the subset of $\mathcal P(\real^n)$ of probability measures
with mean and covariance satisfying the bounds
\begin{equation}
    \label{eq:space_local_lipschitz}
    \lvert \mathcal M(\mu) \rvert \leq R,
    \qquad
    \frac{1}{R^2} \mat I_n \preccurlyeq \mathcal C(\mu) \preccurlyeq R^2 \mat I_n,
\end{equation}
where $\mat I_n$ denotes the identity matrix in $\real^{n \times n}$,
and~$\preccurlyeq$ is the partial ordering defined by the convex cone of positive semi-definite matrices.
Additionally, $\mathcal G_R(\real^n)$ is the subset of $\mathcal G(\real^n)$ of probability measures satisfying~\eqref{eq:space_local_lipschitz}.

For $\pi \in \mathcal P(\real^{\dimu} \times \real^{\dimy})$ defined on the joint space of state and data
associated with random variable $(u,y) \in \real^{\dimu} \times \real^{\dimy}$
we denote by $\mathcal M^u(\pi)$, $\mathcal M^y(\pi)$ the means of the marginal distributions,
and by $\mathcal C^{uu}(\pi)$, $\mathcal C^{uy}(\pi)$ and~$\mathcal C^{yy}(\pi)$
the blocks of the covariance matrix~$\mathcal C(\pi)$.
In particular, we write
\begin{equation}
\label{eq:covj}
    \mathcal M(\pi) =
    \begin{pmatrix}
        \mathcal M^u(\pi) \\
        \mathcal M^y(\pi)
    \end{pmatrix},
    \qquad
    \mathcal C(\pi) =
    \begin{pmatrix}
        \mathcal C^{uu}(\pi) & \mathcal C^{uy}(\pi) \\
        \mathcal C^{uy}(\pi)^\t & \mathcal C^{yy}(\pi)
    \end{pmatrix}.
\end{equation}
For $h\colon \real^{\dimu} \to \real^{\dimy}$ we also define ${\mathcal C}^{hh}(\pi)$ to be the covariance of vector $h(u)$,
for $u$ distributed according to the marginal of $\pi$ on $u$,
and ${\mathcal C}^{uh}(\pi)$ to be the covariance between $u$ and $h(u)$. Given this notation, we also introduce the set $\mathcal P^2_{\succ 0}(\real^{\dimu \times \dimy})$ of probability measures~$\pi$ with finite second moment satisfying $\mathcal C^{yy}(\pi) \succ 0$. 

Throughout, we will denote operators acting on the space of probability measures via the $\mathsf{mathsf}$ font. We introduce the Gaussian projection operator
$\op G \colon \mathcal P(\real^n) \to \mathcal G(\real^n)$ defined by
$\op G \mu = \normal\bigl(\mathcal M(\mu), \mathcal C(\mu)\bigr).$
We refer to $\op G$ as a projection because $\op G \mu$ is the Gaussian distribution closest to $\mu$ with respect to~$\kl{\mu}{\placeholder}$~\cite{MR2247587},
where~$\kl{\mu}{\nu}$ denotes the Kullback--Leibler (KL) divergence of $\mu$ from $\nu$.
We note that~$\op G$ defines a nonlinear mapping.
Throughout the paper we make use of the following weighted total variation distance as in \cite{carrillo2022ensemble}:
\begin{definition}
    \label{definition:weighted_total_variation}
    Let $g\colon \real^n \to [1, \infty)$ define the function $g(v) := 1 + \vecnorm{v}^2$.
    The weighted total variation metric~$d_g \colon \mathcal P(\real^n) \times \mathcal P(\real^n) \to \real$ is defined by
    \[
    d_g(\mu_1, \mu_2) = \sup_{|f|\leq g}\bigl| \mu_1[f]-\mu_2[f]\bigr|,
    \]
    where the supremum is taken over all functions $f\colon \real^n \to \real$ bounded from above by~$g$
    pointwise and in absolute value.
\end{definition}

\begin{remark}
    \label{remark:total_variation}
    The following remarks concern the distance $d_g$:
    \begin{itemize}
        \item
            If $\mu_1, \mu_2$ have Lebesgue densities $\rho_1$, $\rho_2$,
            then
            \[
                d_g(\mu_1, \mu_2) = \int g(v) \abs*{\rho_1(v) - \rho_2(v)} \, \d v.
            \]
        \item
            Unlike the usual total variation distance,
            the weighted metric in~\cref{definition:weighted_total_variation}
            enables control of the differences $\lvert \mathcal M(\mu_1) - \mathcal M(\mu_2) \rvert$ and $\lVert \mathcal C(\mu_1) - \mathcal C(\mu_2) \rVert$.
            This is the content of~\cref{lemma:moment_bound},
            stated in~\cref{sub:moment_bounds},
            which is used in the proof of a key auxiliary result~(\cref{lemma:lipschitz_mean_field_affine}). 
            \end{itemize}
\end{remark}

\subsection{Filtering Distribution}
\label{ssec:FD}

Here we introduce the hidden Markov model that gives rise to the filtering distribution. We employ
notation similar to that established in \cite{calvello2025ensemble,carrillo2022ensemble}.
We consider $\{u_j\}_{j \in \llbracket 0, J \rrbracket}\subset \real^{\dimu}$ to be unknown states
to be determined from associated observations $\{y_j\}_{j \in \llbracket 1, J \rrbracket}\subset \real^{\dimy}$. We assume the states and observations to be governed by the following stochastic dynamics and data model:
\begin{subequations}
    \label{eq:filtering}
    \begin{alignat}{2}
        \label{eq:stochasic_dynamics}
        u_{j+1} &= \vect \Psi(u_j) + \xi_j, \qquad &&\xi_j \sim \normal(0, \mat \Sigma), \\
        \label{eq:data_model}
        y_{j+1} &= \vect h(u_{j+1}) + \eta_{j+1}, \qquad &&\eta_{j+1} \sim \normal(0, \mat \Gamma).
    \end{alignat}
\end{subequations}
We assume that the initial state is a Gaussian random variable $u_0 \sim \normal(\vect m_0, \mat C_0)$ and that the following independence condition is satisfied:
\begin{equation}
    \label{eq:independence}
    u_0 \ind \xi_0 \ind \dotsb \ind \xi_{J-1} \ind \eta_1 \ind \dotsb \ind \eta_{J}.
\end{equation}
We define the conditional distribution of the state $u_j$ given
a realization $Y_j^\dagger := \{y_1^\dagger, \dotsc, y_j^\dagger \}$ as the \textit{filtering distribution}, which we denote by $\mu_j$. We also refer to $\mu_j$ as the \textit{true filter}.
In the context of this paper, data~$Y_j^\dagger$ is assumed to be a collection of realizations from \eqref{eq:filtering}. 
As formulated in~\cite{MR3363508,reich2015probabilistic}, the evolution of the filtering distribution may be written as
\begin{equation}
    \label{eq:true_filtering}
    \mu_{j+1} = \op L_j \op P \mu_j,
\end{equation}
where $\op P$ and $\op L_j$ are maps from $\mathcal P(\real^{\dimu})$ into itself and which effect what are
referred to in
the data assimilation community as the \emph{prediction} and \emph{analysis} steps, respectively \cite{asch2016data}.
The prediction operator $\op P$ is linear and is determined by the Markov kernel
arising from~\eqref{eq:stochasic_dynamics}. On the other hand, the operator $\op L_j$ is nonlinear and encodes the incorporation of the new data point~$y^\dagger_{j+1}$ using Bayes' theorem. These operators are defined via action on probability measures with Lebesgue density~$\mu$ as
\begin{subequations}
\begin{align}
    \label{eq:Markov_kernel}
    \op P \mu(u) &= \frac{1}{\sqrt{(2\pi)^{\dimu} \det \mat \Sigma}}\int_{\real^{\dimu}} \exp \left( - \frac{1}{2} \vecnorm*{u - \vect \Psi(v)}_{\mat \Sigma}^2 \right) \mu(v)\, \d v,\\
    \label{eq:decomposition_of_the_analysis}
    \op L_j \mu(u) &=
    \frac
    {\displaystyle \exp \left( - \frac{1}{2} \bigl\lvert y_{j+1}^{\dagger} - \vect h(u) \bigr\rvert_{\mat \Gamma}^2 \right)  \mu(u)}
    {\displaystyle \int_{\real^{\dimu}} \exp \left( - \frac{1}{2} \bigl\lvert y_{j+1}^{\dagger} - \vect h(U) \bigr\rvert_{\mat \Gamma}^2 \right)  \mu(U) \, \d U}
\end{align}
\end{subequations}
We may rewrite the analysis map $\op L_j$
as the composition $\op B_j \op Q$,
where the operators $\op Q\colon \mathcal P(\real^{\dimu}) \to \mathcal P(\real^{\dimu} \times \real^{\dimy})$
and $\op B_j \colon \mathcal P(\real^{\dimu} \times \real^{\dimy}) \to \mathcal P(\real^{\dimu})$ are defined by
\begin{subequations}
\begin{align}
    \label{eq:definition_Q}
    \op Q \mu (u, y) &= \frac{1}{\sqrt{(2\pi)^{\dimy} \det \mat \Gamma}} \exp \left( - \frac{1}{2} \bigl\lvert y - \vect h(u) \bigr\rvert_{\mat \Gamma}^2 \right)  \mu(u),\\
    \label{eq:definition_B}
    \op B_j \mu(u) &= \frac{\mu(u, y_{j+1}^{\dagger})}{\displaystyle \int_{\real^{\dimu}} \mu(U, y_{j+1}^{\dagger}) \, \d U}.
\end{align}
\end{subequations}
Linear operator $\op Q$ maps a probability measure with density $\mu$ into the law of random variable $\bigl(U,\vect h(U) + \eta\bigr)$, with $U \sim \mu$ independent of~$\eta \sim \normal(0, \mat \Gamma)$. Nonlinear operator~$\op B_j$ effects conditioning on the datum~$y^\dagger_{j+1}$. Hence, we may reformulate \eqref{eq:true_filtering} as
\begin{equation}
    \label{eq:true_filtering2}
    \mu_{j+1} = \op B_j \op Q \op P \mu_j.
\end{equation}

\section{The Ensemble Kalman Filter}
\label{sec:EKF}

We begin in~\cref{ssec:MF} by introducing the specific form of the mean field ensemble Kalman filter
that we consider in this paper; other versions leading to implementable algorithms and for which similar analysis may be developed can be found in \cite{calvello2025ensemble}. In this subsection we also outline the various assumptions that will be employed for the results of the paper. We note that the subsections that follow proceed with increasingly restrictive assumptions on dynamics and observation operator. However, all of the theorems allow for linear growth of the vector fields
defining the dynamics and the observation processes. In~\cref{ssec:SEKF} we state and prove a stability theorem, which shows that the error
between the true filter and the mean field ensemble Kalman filter may be controlled by the error between the true filter and its Gaussian projection on the joint space of state and observations.
In~\cref{ssec:AEKF} we establish an error estimate which shows that the mean field
ensemble Kalman filter provides an accurate approximation of the true filtering distribution for dynamics and observation operators that are close-to-linear. In~\cref{ssec:AEnKF} we deploy the results of the two preceding subsections to state a theorem quantifying the error between the finite particle ensemble Kalman filter itself and the true filtering distribution.

\subsection{The Algorithm}
\label{ssec:MF}

The ensemble Kalman filter as implemented in practice may be derived as a particle approximation of
various mean field dynamics~\cite{calvello2025ensemble}.
The particular mean field ensemble Kalman filter that
we consider in this paper may be written as
\begin{subequations}
\label{eq:ensemble_kalman_mean_field}
\begin{alignat}{2}
    \widehat u_{j+1} &= \vect \Psi (u_j) + \xi_j, \qquad & \xi_j \sim \normal(0, \mat \Sigma), \\
    \widehat y_{j+1} &= \vect h(\widehat u_{j+1}) + \eta_{j+1},  \qquad & \eta_{j+1} \sim \normal(0, \mat \Gamma), \\
    \label{eq:mean_field_ekf_analysis}
    u_{j+1} &= \widehat u_{j+1} + \mathcal C^{uy}\left(\widehat \pi_{j+1}^{\kalman}\right) \mathcal C^{yy}\left(\widehat \pi_{j+1}^{\kalman}\right)^{-1} \bigl(y^\dagger_{j+1} - \widehat y_{j+1} \bigr),
\end{alignat}
\end{subequations}
where $\widehat \pi^{\kalman}_{j+1} = {\rm Law}(\widehat u_{j+1}, \widehat y_{j+1})$
and where the independence condition~\eqref{eq:independence} holds.
We refer the reader back to~\cref{ssec:N} for the definition of the covariance matrices appearing in~\eqref{eq:mean_field_ekf_analysis}.
We denote by~$\mu^{\kalman}_j$ the law of $u_j$.
We aim to formulate the evolution of $\mu_j^{\kalman}$ in terms of maps on probability measures, hence we introduce,
for a given $y^\dagger_{j+1}$,
the map~$\op T_j \colon \mathcal P^2_{\succ 0}(\real^{\dimu} \times \real^{\dimy}) \to \mathcal P(\real^{\dimu})$ defined by
\begin{equation}\label{eq:Tmap}
\op T_j\pi =  \mathscr T(\placeholder, \placeholder; \pi, y^\dagger_{j+1})_{\sharp} \pi.
\end{equation}
Here for given $\pi\in {\mathcal P^2_{\succ 0}}(\real^{\dimu} \times \real^{\dimy})$, $z\in \real^{\dimy}$, the transport map $ \mathscr T$ is defined as
\begin{subequations}
  \label{eq:wenan}
\begin{align}
    \mathscr T(\placeholder, \placeholder; \pi, z) \colon
    &\real^{\dimu} \times \real^{\dimy} \to \real^{\dimu}; \\
    \label{eq:mean_field_map}
    (u, y) \mapsto &\,\, u + \mathcal C^{uy}(\pi) \mathcal C^{yy}(\pi)^{-1} \bigl(z - y \bigr).
\end{align}
\end{subequations}
Evolution of the probability measure $\mu_j^{\kalman}$ may then be written (see \cite{calvello2025ensemble}) as
\begin{equation}
    \label{eq:compact_mf_enkf}
    \mu_{j+1}^{\kalman} = \op T_j\op Q \op P \mu_j^{\kalman}.
\end{equation}
We note that a specific instance of map $\mathscr T$ defined in \eqref{eq:wenan} appears in~\eqref{eq:mean_field_ekf_analysis} and is determined by probability measure~$\pi:=\widehat \pi^{\kalman}_{j+1}$ (here equal to $\op Q \op P \mu_j^{\kalman}$) and data $z:=y^\dagger_{j+1}$.
We refer to \cite{carrillo2022ensemble} for a step-by-step argument detailing  why the law of~$u_j$ in~\eqref{eq:ensemble_kalman_mean_field} evolves according to~\eqref{eq:compact_mf_enkf}.
The operator~$\op T_j$ is equivalent, over the Gaussians $\mathcal G(\real^{\dimu} \times \real^{\dimy}) \subset \mathcal P(\real^{\dimu} \times \real^{\dimy}),$ to the conditioning operator~$\op B_j$; see~\cref{lemma:mean_field_map}.
We recall that in the particular case where~$\mu_0$ is Gaussian, which we assume throughout the paper,
and the operators~$\vect \Psi$ and~$\vect h$ are linear,
the mean field ensemble Kalman filter~\eqref{eq:compact_mf_enkf} coincides with the filtering distribution~\eqref{eq:true_filtering2}. 
However, this paper focuses on the aim of analyzing the accuracy of ensemble Kalman methods when~$\vect \Psi$ and~$\vect h$ are not assumed to be linear.

The ensemble Kalman filter as implemented in practice may be derived as a particle approximation of the mean field dynamics as defined by sample paths in~\eqref{eq:ensemble_kalman_mean_field} or as an evolution on measures in~\eqref{eq:compact_mf_enkf}.
To this end, we observe that for any $\pi$ in the range of $\op Q$,
$\mathcal C^{yy}(\pi)=\mathcal C^{hh}(\pi)+\Gamma$ and $\mathcal C^{uy}(\pi)= \mathcal C^{uh}(\pi)$, using the notation
introduced in, and following, \eqref{eq:covj}. This is since the noise in the observation component defining $\pi$ is then independent of the state component defining $\pi.$
Using this observation, the particle approximation of
\eqref{eq:ensemble_kalman_mean_field} takes the form
\begin{subequations}
\label{eq:ensemble_kalman_particle}
\begin{alignat}{2}
    \widehat u^{(i)}_{j+1} &= \vect \Psi \bigl(u^{(i)}_j\bigr) + \xi^{(i)}_j, \qquad & \xi^{(i)}_j \sim \normal(0, \mat \Sigma), \\
    \widehat y^{(i)}_{j+1} &= \vect h\bigl(\widehat u^{(i)}_{j+1}\bigr) + \eta^{(i)}_{j+1},  \qquad & \eta^{(i)}_{j+1} \sim \normal(0, \mat \Gamma), \\
    u^{(i)}_{j+1} &= \widehat u^{(i)}_{j+1} + \mathcal C^{uh}\left(\widehat \pi_{j+1}^{\kalmanN}\right) \Bigl(\mathcal C^{hh}\Bigl(\widehat \pi_{j+1}^{\kalmanN}\Bigr)+\Gamma\Bigr)^{-1} \bigl(y^\dagger_{j+1} - \widehat y^{(i)}_{j+1} \bigr).
\end{alignat}
\end{subequations}
where $\widehat \pi^{\kalmanN}_{j+1}$ is the empirical measure
\[
    \widehat \pi^{\kalmanN}_{j+1} = \frac1N\sum_{i=1}^N\delta_{\bigl(\widehat u^{(i)}_{j+1}, \widehat y^{(i)}_{j+1}\bigr)},
\]
and $\xi^{(i)}_j \sim \normal(0, \mat \Sigma)$ i.i.d.\ in both $i$ and $j$ and $\eta^{(i)}_{j+1} \sim \normal(0, \mat \Gamma)$ i.i.d.\ in both $i$ and $j$; furthermore, the set of $\{\xi^{(i)}_j\}$ are independent from the set of $\{ \eta^{(i)}_{j+1}\}$. Choosing to express the particle approximation of the covariance in observation space through $\mathcal C^{hh}$ and $\Gamma$ ensures invertibility, provided $\Gamma$ is invertible.
From the particles evolving according to the dynamics in~\eqref{eq:ensemble_kalman_particle} we define the empirical measure
\begin{equation}
\label{eq:kalman_measure}
\mu_{j+1}^{\kalmanN} = \frac1N\sum_{i=1}^N\delta_{u_{j+1}^{(i)}},
\end{equation}
whose evolution describes the ensemble Kalman filter.

Our theorems  in~\cref{ssec:SEKF,ssec:AEKF} regard the relationship between
the true filter \eqref{eq:true_filtering2} and the mean field ensemble Kalman filter
\eqref{eq:compact_mf_enkf}. In~\cref{ssec:SEKF} we consider the setting in which $\vect \Psi$ and~$\vect h$ exhibit linear growth but are not assumed to be linear, hence the true filter is not Gaussian; in~\cref{ssec:AEKF}
we then consider small perturbations of the Gaussian setting that arise when the vector fields
$\vect \Psi$ and~$\vect h$  are close to affine. The theorem in~\cref{ssec:AEnKF} concerns the relationship between the true filter \eqref{eq:true_filtering2} and the ensemble Kalman filter \eqref{eq:kalman_measure}. In this subsection, we combine existing analysis
on the convergence of the ensemble Kalman filter to the mean field ensemble Kalman filter with the results from the previous subsections to state and prove an error estimate between the ensemble Kalman filter and the true filter in the nonlinear setting. Specifically, we study the case of a vector field $\vect \Psi$ that is a bounded perturbation away from affine and an affine vector field $\vect h$. To state our theorems we will use the following set of assumptions.

\begin{assumption}
    \label{assumption:ensemble_kalman}
    There exist positive constants $\kappa_y, \kappa_\Psi, \kappa_h, \ell_h, \sigma$ and $\gamma$ such that the data $\{y_j^\dagger\}$, the vector fields $(\vect \Psi, \vect h)$
    and the covariances $(\mat \Sigma, \mat \Gamma)$ satisfy:

    \begin{assumpenum}[label=(H\arabic*)]
    \item
        \label{assumpenum:assumption1}
        data $Y^\dagger =\{y_j^\dagger\}_{j=1}^J$ lies in set $B_y \subset \real^{KJ}$ defined by
        \[
            B_y := \left\{Y^\dagger \in \real^{KJ}: \max_{j \in \range{1}{J}} \lvert y^\dagger_j \rvert \le \kappa_y\right\};
        \]

    \item
        {
        \label{assumpenum:assumption5}
        covariance matrices $\mat \Sigma$ and $\mat \Gamma$ are positive definite:
        $\mat \Sigma \succcurlyeq \sigma \mat I_{\dimu}$ and~$\mat \Gamma \succcurlyeq \gamma \mat I_{\dimy}$ for positive $\sigma$ and $\gamma$.
    }

    \item
        \label{assumpenum:assumption2}
        function $\Psi \colon \real^{\dimu} \to \real^{\dimu}$ satisfies $\bigl\lvert \Psi(u) \bigr\rvert \leq \kappa_{\Psi} \bigl( 1  + \abs{u}  \bigr)$ for all $u\in\real^{d_u}$;
    \item
        \label{assumpenum:assumption3}
        function $h\colon \real^{\dimu} \to \real^{\dimy}$ satisfies $\bigl\lvert h(u) \bigr\rvert \leq \kappa_{h} \bigl( 1  + \abs{u}  \bigr)$ for all $u\in\real^{d_u}$;

    \item
        \label{assumpenum:assumption_lipschitz}
        function $h\colon \real^{\dimu} \to \real^{\dimy}$ satisfies $|h|_{C^{0,1}} \leq \ell_{\vect h} < \infty$;
    \end{assumpenum}
\end{assumption}

\renewcommand{\theassumption}{V}
\begin{assumption}
    \label{assumption:affine_vector_fields}
    The vector fields $(\Psi,h)$ are affine, i.e. they satisfy the following

    \begin{assumpenum}[label=(V\arabic*)]

    \item
        \label{assumpenum:affine_assumption1}
        The function $\Psi \colon \real^{\dimu} \to \real^{\dimu}$ satisfies $ \Psi(u) \coloneqq Mu + b$, for some $M\in\real^{\dimu \times \dimu}$ and $b\in \real^{\dimu}$.

    \item
        \label{assumpenum:affine_assumption2}
        The function $h\colon \real^{\dimu} \to \real^{\dimy}$ satisfies $ h(u) \coloneqq Hu + w$, for some $H\in \real^{\dimy \times \dimu}$ and $w\in\real^{\dimy}$.

    \end{assumpenum}

\end{assumption}

\subsection{Stability Theorem:  Mean Field Ensemble Kalman Filter}
\label{ssec:SEKF}

In this section we consider the setting of dynamics and observation operator satisfying the linear growth assumptions contained in~\cref{assumption:ensemble_kalman}. Informally the result of \cref{theorem:main_theorem} shows that, under these assumptions, and if the true filtering distributions $(\mu_j)_{j \in \range{0}{J-1}}$ are close to Gaussian after lifting to the joint state and data space,
then the distribution $\mu^{\kalman}_j$ given by the mean field ensemble Kalman filter~\eqref{eq:compact_mf_enkf}
is close to the filtering distribution~$\mu_j$  given by \eqref{eq:true_filtering2}
for all $j \in \llbracket 0, J \rrbracket$.

\begin{theorem}
    [Stability: Mean Field Ensemble Kalman Filter]
    \label{theorem:main_theorem}
    Assume that the probability measures~$(\mu_j)_{j \in \range{0}{J}}$ and  $(\mu^{\kalman}_j)_{j \in \range{0}{J}}$ are given respectively by the dynamical systems~\eqref{eq:true_filtering2} and~\eqref{eq:compact_mf_enkf},
    initialized at the Gaussian probability measure $\mu_0 = \mu_0^{\kalman}\in~\mathcal G(\real^{\dimu})$.
    That~is,
    \[
        \mu_{j+1} = \op B_j \op Q \op P \mu_j, \qquad
        \mu^{\kalman}_{j+1} = \op T_j \op Q \op P \mu_j^{\kalman}.
    \]
    If \cref{assumption:ensemble_kalman} holds,
    then there exists~$C = C\bigl(\mathcal{M}_{\max\{3+\dimu, 4+\dimy\}}(\mu_0),\kappa_y, \kappa_{\vect \Psi}, \kappa_{\vect h}, \ell_{\vect h}, \mat \Sigma, \mat \Gamma, J\bigr)$
    such that
    \[
        d_g(\mu^{\kalman}_J, \mu_J) \leq C  \max_{j \in \range{0}{J-1}} d_g( \op Q \op P \mu_j, \op G \op Q \op P \mu_j).
    \]
\end{theorem}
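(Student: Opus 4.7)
The plan is to proceed by a telescoping decomposition over the $J$ filtering steps, combined with two ingredients: (i) a one-step consistency estimate exploiting the fact, recorded in \cref{lemma:mean_field_map}, that $\op T_j$ and $\op B_j$ coincide on Gaussians, so that the only source of error at step $j$ is the non-Gaussianity of $\op Q \op P \mu_j$; and (ii) a one-step stability estimate for the composition $\op T_j \op Q \op P$ in the weighted total variation metric $d_g$. Both ingredients will necessarily produce constants depending on moments of $\mu_j$ and $\mu_j^{\kalman}$, so a preliminary task is to propagate a priori moment bounds along both trajectories, using \ref{assumpenum:assumption2}--\ref{assumpenum:assumption3} to ensure that the moments of $\op Q \op P \mu$ are controlled by moments of $\mu$ (with exponents growing in $J$).

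Setting $\pi_j \coloneqq \op Q \op P \mu_j$ and $\pi_j^{\kalman} \coloneqq \op Q \op P \mu_j^{\kalman}$, I would write
\begin{align*}
d_g\bigl(\mu_{j+1}^{\kalman}, \mu_{j+1}\bigr)
&= d_g\bigl(\op T_j \pi_j^{\kalman}, \op B_j \pi_j\bigr) \\
&\leq d_g\bigl(\op T_j \pi_j^{\kalman}, \op T_j \pi_j\bigr) + d_g\bigl(\op T_j \pi_j, \op B_j \pi_j\bigr).
\end{align*}
For the second, consistency, term, inserting the identity $\op T_j \op G \pi_j = \op B_j \op G \pi_j$ gives
\[
    d_g\bigl(\op T_j \pi_j, \op B_j \pi_j\bigr) \leq d_g\bigl(\op T_j \pi_j, \op T_j \op G \pi_j\bigr) + d_g\bigl(\op B_j \op G \pi_j, \op B_j \pi_j\bigr),
\]
each term being controlled by a local Lipschitz estimate for $\op T_j$, respectively $\op B_j$, applied to the ``Gaussianization defect'' $d_g(\pi_j, \op G \pi_j)$, which is precisely the quantity appearing on the right-hand side of the theorem.

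For the first, stability, term I would combine local Lipschitz continuity of $\op T_j$ (whose Kalman-gain inversion $\mathcal C^{yy}(\pi)^{-1}$ is tamed by \ref{assumpenum:assumption5}) with local Lipschitz continuity of $\op Q \op P$ (whose constants reflect linear growth of $\vect \Psi$ via \ref{assumpenum:assumption2} and of $\vect h$ via \ref{assumpenum:assumption3}--\ref{assumpenum:assumption_lipschitz}). This yields a one-step recursion
\[
    d_g\bigl(\mu_{j+1}^{\kalman}, \mu_{j+1}\bigr) \leq L_j \, d_g\bigl(\mu_j^{\kalman}, \mu_j\bigr) + K_j \, d_g\bigl(\pi_j, \op G \pi_j\bigr),
\]
with $L_j,K_j$ depending polynomially on $\mathcal M_q(\mu_j)$ and $\mathcal M_q(\mu_j^{\kalman})$ for $q = \max\{3+\dimu,4+\dimy\}$, the specific exponent being dictated by \cref{lemma:moment_bound} applied to $\pi_j \in \mathcal P(\real^{\dimu} \times \real^{\dimy})$ when bounding $|\mathcal M(\pi) - \mathcal M(\op G\pi)|$ and $\|\mathcal C(\pi) - \mathcal C(\op G\pi)\|$ by $d_g(\pi,\op G\pi)$. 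Iterating $J$ times from $d_g(\mu_0^{\kalman},\mu_0) = 0$ and invoking the propagated moment bounds gives the claimed estimate with $C$ depending on $J$ and the stated parameters.

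The hard part is verifying the local Lipschitz estimates for $\op T_j$ and $\op B_j$ in the weighted metric $d_g$ with explicit moment-dependent constants. The transport map $\op T_j$ requires controlling sensitivity of the Kalman gain, i.e.\ bounding $\|\mathcal C^{uy}(\pi) \mathcal C^{yy}(\pi)^{-1} - \mathcal C^{uy}(\pi') \mathcal C^{yy}(\pi')^{-1}\|$ by $d_g(\pi,\pi')$, which is where \cref{lemma:moment_bound} and the exponent $\max\{3+\dimu,4+\dimy\}$ enter. The conditioning map $\op B_j$ is more delicate still: one must bound the normalizing constant $\int \pi(U, y^\dagger_{j+1}) \, \d U$ from below uniformly in $j$, which relies on boundedness of the data under \ref{assumpenum:assumption1}, the Gaussian likelihood structure of $\op Q$ via \ref{assumpenum:assumption5}, and the linear growth of $\vect h$ from \ref{assumpenum:assumption3}, so as to secure an explicit lower bound on the $\pi_j$-mass near $y^\dagger_{j+1}$. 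Tracking moment exponents carefully through this web of estimates, while letting constants degrade mildly in $J$, is where most of the technical effort of the theorem concentrates.
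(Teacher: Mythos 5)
Your proposal follows essentially the same route as the paper: the same triangle-inequality decomposition (stability of $\op T_j$ along the two trajectories, plus the consistency term split by inserting $\op G$ and using $\op T_j\op G=\op B_j\op G$), the same supporting lemmas (propagated moment bounds, Lipschitz continuity of $\op P$ and $\op Q$, local Lipschitz stability of $\op T_j$ and $\op B_j$ in $d_g$), and the same conclusion via discrete Gr\"onwall from $\mu_0^{\kalman}=\mu_0$. The only minor inaccuracy is attributing the exponent $\max\{3+\dimu,4+\dimy\}$ to \cref{lemma:moment_bound}; in the paper it arises from the pointwise density regularity estimate of \cref{lemma:lipschitz_density_new} used inside the stability proof for $\op T_j$, but this does not affect the validity of your argument.
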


Proof of Theorem \ref{theorem:main_theorem} below relies on the following auxiliary results,
all proved in~\cref{appendix:A}.
\begin{enumerate}
    \item
        \label{item1}
        For any probability measure~$\mu$ with finite first and second order polynomial moments $\mathcal{M}_1(\mu)$ and $\mathcal{M}_2(\mu)$, the means of~$\op P \mu$ and~$\op Q \op P \mu$ are bounded from above,
        and their covariances are bounded both from above and from below.
        The constants in these bounds depend only on the parameters~$\kappa_{\vect \Psi}$, $\kappa_{\vect h}$, $\mat \Sigma, \mat \Gamma$ and on $\mathcal{M}_1(\mu)$ and $\mathcal{M}_2(\mu)$. See~\cref{lemma:bound_on_Pmu_new,lemma:bound_on_QPmu_new}.

    \item
        \label{item2}
        Let $(\mu_j)_{j \in \range{1}{J}}$ and  $(\mu^{\kalman}_j)_{j \in \range{1}{J}}$ denote the probability measures obtained respectively from the dynamical systems~\eqref{eq:true_filtering2} and~\eqref{eq:compact_mf_enkf},
        initialized at the same Gaussian measure $\mu_0 = \mu_0^{\kalman} \in~\mathcal G(\real^{\dimu})$.
        Then for any integer $q\geq 2$,
        there exist constants $M_q,M_q^{\kalman}<\infty$ depending on $\mathcal{M}_q(\mu_0),\kappa_y, \kappa_{\vect \Psi}, \kappa_{\vect h}, \mat \Sigma, \mat \Gamma, J$ so that
        \[
        \max_{j \in \range{0}{J}} \mathcal{M}_q(\mu_j)\leq M_q \quad \text{and} \quad \max_{j \in \range{0}{J}} \mathcal{M}_q\bigl(\mu^{\kalman}_j\bigr)\leq M_q^{\kalman}.
        \]
        See~\cref{lemma:true_filter_bounded_moments,theorem:approximate_filter_bounded_moments}. This will facilitate use of the stability results from~\cref{item6,item7}.

    \item
        \label{item3}
        For any Gaussian measure $\mu \in \mathcal G(\real^{\dimu} \times \real^{\dimy})$,
        we have that~$\op B_j \mu = \op T_j \mu$.
        See~\cref{lemma:mean_field_map} and~\cite{calvello2025ensemble,carrillo2022ensemble}.

     \item
        \label{item4}
        The map~$\op P$ is Lipschitz on $\mathcal P(\real^{\dimu})$ for the metric $d_g$. The Lipschitz constant~$L_{\op P}$ depends only on the parameters $\kappa_{\vect \Psi}$ and $\mat\Sigma$.
        See~\cref{lemma:lipschitz_p}.

    \item
        \label{item5}
        The map~$\op Q$ is Lipschitz on $\mathcal P(\real^{\dimu})$ for the metric $d_g$. The Lipschitz constant~$L_{\op Q}$ depends only on the
        parameters $\kappa_{\vect h}$ and $\mat \Gamma$.
        See~\cref{lemma:lipschitz_Q}.

    \item
        \label{item6}
        The map $\op B_j$ satisfies that, for all $\pi \in\{\op Q \op P \mu: \mu \in \mathcal P(\real^{\dimu}) \,\text{and }\mathcal{M}_{2}(\mu) < \infty\}\subset \mathcal P(\real^{\dimu}\times\real^{\dimy})$, it holds
        \[
            \forall j \in \llbracket 0, J \rrbracket,
            \qquad
            d_{g}(\op B_j \op G \pi, \op B_j \pi)
            \leq C_{\op B} d_{g}(\op G \pi, \pi),
        \]
        where $C_{\op B} = C_{\op B}\bigl(\mathcal{M}_{2}(\mu), \kappa_y, \kappa_{\vect \Psi}, \kappa_{\vect h}, \mat \Sigma, \mat \Gamma\bigr)$.
        Namely, this item regards the stability of the operator $\op B_j$ between a probability measure and its Gaussian projection.
        See~\cref{lemma:missing_lemma}.

    \item
        \label{item7}
        The map $\op T_j$ satisfies the following bound:
        for all $R \ge 1$, it holds for all~$\pi \in \mathcal P_R(\real^{\dimu} \times \real^{\dimy})$ and $p \in \{\op Q \op P \mu: \mu \in \mathcal P(\real^{\dimu}) \,\text{and }\mathcal{M}_{2\cdot\max\{3+\dimu, 4+\dimy\}}(\mu) < \infty\}\subset \mathcal P(\real^{\dimu}\times\real^{\dimy})$ that
        \[
            \forall j \in \llbracket 0, J \rrbracket,
            \qquad
            d_g(\op T_j \pi, \op T_j p)
            \leq L_{\op T} \, d_g(\pi, p),
        \]
        for a constant $L_{\op T} = L_{\op T}\bigl(R,\mathcal{M}_{2\cdot\max\{3+\dimu, 4+\dimy\}}(\mu), \kappa_y, \kappa_{\vect \Psi}, \kappa_{\vect h}, {\ell_h}, \mat \Sigma, \mat \Gamma\bigr)$.
        Namely, this item may be regarded as a local Lipschitz continuity result. 
        See~\cref{lemma:lipschitz_mean_field_affine}.
\end{enumerate}

\begin{proof}
    [Proof of \cref{theorem:main_theorem}]
    Within the following argument we make reference to the list of items outlined above. We begin by defining for ease of exposition the following measure of difference
    between the true filter and its Gaussian projection:
    \begin{equation}
        \label{eq:epsilon}
        \varepsilon:=  \max_{j \in \range{0}{J-1}} d_g( \op Q \op P \mu_j, \op G \op Q \op P \mu_j).
    \end{equation}
    We assume throughout the argument that $j \in \range{0}{J-1}.$
    The proof rests upon the following application of the triangle inequality:
    \begin{subequations}
    \begin{align}
        \notag
        d_g(\mu^{\kalman}_{j+1}, \mu_{j+1})
        &= d_g\left(\op T_j \op Q \op P \mu_j^{\kalman}, \op B_j \op Q \op P \mu_j\right) \\
        &\leq d_g\left(\op T_j \op Q \op P \mu_j^{\kalman}, \op T_j \op Q \op P \mu_j\right) +  d_g\left(\op T_j \op Q \op P \mu_j, \op T_j \op G \op Q \op P \mu_j\right)
        +  d_g\left(\op B_j \op G \op Q \op P \mu_j, \op B_j \op Q \op P \mu_j\right).
        \label{eq:main_idea_main_proofB}
    \end{align}
    \end{subequations}
    Here we have used the fact that $\op T_j \op G \op Q \op P \mu_j = \op B_j \op G \op Q \op P \mu_j$ by~\cref{item3} (\cref{lemma:mean_field_map}).
    \Cref{item2} (\cref{lemma:true_filter_bounded_moments,theorem:approximate_filter_bounded_moments}) shows that, for any integer $q\geq 2$, there exist constants $M_q,M_q^{\kalman}<\infty$ depending on $\mathcal{M}_q(\mu_0),\kappa_y, \kappa_{\vect \Psi}, \kappa_{\vect h}, \mat \Sigma, \mat \Gamma, J$ so that
        \[
            \max_{j \in \range{0}{J}} \mathcal{M}_q(\mu_j)\leq M_q \quad \text{and} \quad \max_{j \in \range{0}{J}} \mathcal{M}_q\bigl(\mu^{\kalman}_j\bigr)\leq M_q^{\kalman}.
        \]
        Therefore by~\cref{item1} (\cref{lemma:bound_on_QPmu_new}),
    there is a constant~$R \ge 1$, depending on the covariances~$\mat \Sigma$, $\mat \Gamma$, the bounds~$\kappa_{\vect \Psi}$ and~$\kappa_{\vect h}$
    from \cref{assumption:ensemble_kalman}, and the moment bounds $M_2$ and $M_2^{\kalman}$, such that for any $j\in\range{0}{J-1}$ it holds that $\op Q \op P \mu_j,\op Q \op P \mu_j^{\kalman}  \in \mathcal P_R(\real^{\dimu} \times \real^{\dimy})$.
    In view of~\cref{item4,item5,item7} (\cref{lemma:lipschitz_mean_field_affine,lemma:lipschitz_Q,lemma:lipschitz_p}), the first term in~\eqref{eq:main_idea_main_proofB} satisfies
    \begin{align}
        \label{eq:main_theorem_1}
        d_g\left(\op T_j \op Q \op P \mu_j^{\kalman}, \op T_j \op Q \op P \mu_j \right)
        &\leq L_{\op T}\bigl(R, M_{2\cdot\max\{3+\dimu, 4+\dimy\}}\bigr) L_{\op Q} L_{\op P} d_g\left(\mu_j^{\kalman}, \mu_j \right),
    \end{align}
    where, for conciseness, we have omitted the dependence of the constants in the bound on~$\kappa_y, \kappa_{\vect \Psi}, \kappa_{\vect h}, \mat \Sigma, \mat \Gamma$.
    By definition of $R$ it holds that~$\op G \op Q \op P \mu_j\in \mathcal G_R(\real^{\dimu} \times \real^{\dimy})$, hence
    the second term in~\eqref{eq:main_idea_main_proofB} may be bounded using \cref{item7} (\cref{lemma:lipschitz_mean_field_affine}) and the definition in of $\varepsilon$ in~\eqref{eq:epsilon}. Indeed, it holds that
    \begin{equation*}
        d_g\left(\op T_j \op Q \op P \mu_j, \op T_j \op G \op Q \op P \mu_j\right)
        \leq L_{\op T}\bigl(R,  M_{2\cdot\max\{3+\dimu, 4+\dimy\}}\bigr) d_g\left(\op Q \op P \mu_j, \op G \op Q \op P \mu_j\right)
        \le L_{\op T}\bigl(R,  M_{2\cdot\max\{3+\dimu, 4+\dimy\}}\bigr) \varepsilon.
    \end{equation*}
    Using~\cref{item6} (\cref{lemma:missing_lemma}) and the definition of $\varepsilon$ in~\eqref{eq:epsilon} we establish the following bound on the third term in~\eqref{eq:main_idea_main_proofB}:
    \[
        d_g\left(\op B_j \op G \op Q \op P \mu_j, \op B_j \op Q \op P \mu_j\right)
        \leq C_{\op B} d_g\left(\op G \op Q \op P \mu_j, \op Q \op P \mu_j\right) \le  C_{\op B} \varepsilon.
    \]
    Therefore, setting~$\ell = L_{\op T}\bigl(R, M_{2\cdot\max\{3+\dimu, 4+\dimy\}}\bigr) L_{\op Q} L_{\op P}$, we have that
    \begin{align*}
        d_g(\mu^{\kalman}_{j+1}, \mu_{j+1})
        &\leq \ell d_g(\mu^{\kalman}_{j}, \mu_{j}) + \bigl(L_{\op T}\bigl(R, M_{\max\{3+\dimu, 4+\dimy\}}\bigr) + C_{\op B}\bigr) \varepsilon.
    \end{align*}
    The conclusion then follows from the discrete Gr\"onwall lemma, using the fact that $\mu_0^{\kalman} = \mu_0.$
\end{proof}

\subsection{Error Estimate: Mean Field Ensemble Kalman Filter}
\label{ssec:AEKF}

It is possible to deduce from the result of \cref{theorem:main_theorem} that the error between the true filter and the mean field ensemble Kalman filter can be arbitrarily small
if the true filter is arbitrarily close to its Gaussian projection, in state-observation space.
This condition on the true filter, which we refer to as ``closeness to Gaussian'', can be satisfied in the setting of unbounded vector fields by considering small perturbations of affine vector fields, as we prove in~\cref{prop:auxiliary_close_to_affine}. Combining~\cref{prop:auxiliary_close_to_affine} with \cref{theorem:main_theorem} gives an error estimate for the mean field ensemble Kalman filter, yielding~\cref{thm:error_estimate_mfenkf}.


\begin{proposition}[Approximation Result]
\label{prop:auxiliary_close_to_affine}
    Suppose that the data~$Y^\dagger =\{y_j^\dagger\}_{j=1}^J$ and the matrices
    $(\mat \Sigma, \mat \Gamma)$ satisfy~\cref{assumpenum:assumption1,assumpenum:assumption5}.
    Fix $\kappa_{\vect \Psi},\kappa_{\vect h}>0$ and assume that $\Psi_0 \colon \real^{\dimu} \to \real^{\dimu}$ and $\vect h_0 \colon \real^{\dimu} \to \real^{\dimy}$ are affine functions and hence satisfy~\cref{assumpenum:affine_assumption1,assumpenum:affine_assumption2}, respectively,
    while also satisfying~\cref{assumpenum:assumption2,assumpenum:assumption3} with $\kappa_{\vect \Psi},\kappa_{\vect h}$.
    Let $(\mu_{j})_{j \in \range{0}{J}}$ denote the true filtering distribution associated with
    functions~$(\Psi, \vect h)$, initialized at the  Gaussian probability
    measure $\mu_0 = \normal(m_0, C_0) \in \mathcal G(\real^{\dimu})$.
    Then,
    for any $J \in \mathbb{Z}^+$,
    there is $C = C(m_0, C_0, \kappa_y, \kappa_{\Psi}, \kappa_h, \mat \Sigma, \mat \Gamma, J) > 0$ such that for all $\varepsilon \in [0, 1]$
    and all~$(\Psi, h) \in B_{L^\infty}\bigl((\Psi_0,h_0),\varepsilon \bigr)$,
    it holds that
    \begin{equation}
        \label{eq:second_equation_corollary_new}
        \max_{j \in \range{0}{J-1}}
        d_g(\op G \op Q \op P \mu_j, \op Q \op P \mu_j)
        \leq C \varepsilon.
    \end{equation}
    \end{proposition}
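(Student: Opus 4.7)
The plan is to compare the true filter $(\mu_j)$ associated with $(\Psi, h)$ against the reference filter $(\mu_j^{(0)})$ associated with the affine pair $(\Psi_0, h_0)$ and the same Gaussian initial condition $\mu_0$, and to denote by $\op P_0, \op Q_0$ the prediction and observation-augmentation maps built from $(\Psi_0, h_0)$. Because $(\Psi_0, h_0)$ are affine and $\mu_0$ is Gaussian, the reference filter reduces to the standard Kalman filter: every $\mu_j^{(0)}$ is Gaussian, and so is the joint state--observation measure $\op Q_0 \op P_0 \mu_j^{(0)}$, which therefore satisfies $\op G \op Q_0 \op P_0 \mu_j^{(0)} = \op Q_0 \op P_0 \mu_j^{(0)}$. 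Combined with an auxiliary Lipschitz estimate $d_g(\op G \pi_1, \op G \pi_2) \le L_{\op G} d_g(\pi_1, \pi_2)$ valid on $\mathcal P_R(\real^{\dimu}\times\real^{\dimy})$ (which follows from the control of means and covariances by $d_g$ recalled in \cref{remark:total_variation}, together with a direct computation on Gaussian densities under the bounds~\eqref{eq:space_local_lipschitz}), a triangle inequality reduces the goal to
\[
    d_g(\op G \op Q \op P \mu_j, \op Q \op P \mu_j) \le (1 + L_{\op G}) \, d_g\bigl(\op Q \op P \mu_j,\, \op Q_0 \op P_0 \mu_j^{(0)}\bigr),
\]
i.e.\ to showing that the joint state--observation distributions of the two filters stay within $C\varepsilon$ in $d_g$ up to time $J$.

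I would then run a coupled induction on
\[
    e_j := d_g(\mu_j, \mu_j^{(0)}), \qquad \delta_j := d_g(\op Q \op P \mu_j, \op G \op Q \op P \mu_j),
\]
aiming for $\delta_j \le C(\varepsilon + e_j)$ and $e_{j+1} \le C(\varepsilon + e_j)$, with constants uniform in $j \in \range{0}{J-1}$. The bound on $\delta_j$ follows from the reduction above by inserting $\op Q_0 \op P_0 \mu_j^{(0)}$ as an intermediate and combining (i) the Lipschitzness of $\op P$ and $\op Q$ in their measure argument (\cref{lemma:lipschitz_p,lemma:lipschitz_Q}) with (ii) new perturbation estimates of the form
\[
    d_g(\op P \mu, \op P_0 \mu) \le C\bigl(1 + \mathcal M_q(\mu)\bigr)\, \varepsilon, \qquad d_g(\op Q \nu, \op Q_0 \nu) \le C\bigl(1 + \mathcal M_q(\nu)\bigr)\, \varepsilon,
\]
obtained by a first-order Taylor expansion of the Gaussian kernels in~\eqref{eq:Markov_kernel} and~\eqref{eq:definition_Q} in their centers, exploiting $\|\Psi - \Psi_0\|_{L^\infty}, \|h - h_0\|_{L^\infty} \le \varepsilon$. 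For the recursion on $e_{j+1}$, I use the identity $\mu_{j+1}^{(0)} = \op T_j \op Q_0 \op P_0 \mu_j^{(0)}$ (valid because $\op Q_0 \op P_0 \mu_j^{(0)}$ is Gaussian and $\op B_j = \op T_j$ on Gaussians by \cref{lemma:mean_field_map}), insert $\op G \op Q \op P \mu_j$ as an intermediate, and combine the stability of $\op B_j$ between a measure and its Gaussian projection (\cref{lemma:missing_lemma}) with the local Lipschitzness of $\op T_j$ (\cref{lemma:lipschitz_mean_field_affine}) to obtain $e_{j+1} \le C(\varepsilon + e_j + \delta_j)$, which with the $\delta_j$-bound yields $e_{j+1} \le C(\varepsilon + e_j)$. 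Since both $\mu_j$ and $\mu_j^{(0)}$ enjoy uniformly-in-$j$ bounded polynomial moments (\cref{lemma:true_filter_bounded_moments} applied with each vector field pair), and the associated $\op Q \op P$-images all lie in a common $\mathcal P_R$ (\cref{lemma:bound_on_QPmu_new}), all constants can be chosen independently of $j$. The discrete Gr\"onwall lemma together with $e_0 = 0$ then gives $e_j \le C\varepsilon$ and hence $\delta_j \le C\varepsilon$ for every $j \in \range{0}{J-1}$.

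The main obstacle is the Lipschitz estimate for the Gaussian projection $\op G$ on $\mathcal P_R$, which is the only genuinely new ingredient beyond those already deployed in the proof of~\cref{theorem:main_theorem}: one must combine the fact that $d_g$ controls differences of means and covariances (\cref{remark:total_variation}) with a quantitative bound on the $d_g$-distance between two Gaussians expressed in terms of their parameters, exploiting the uniform upper and lower covariance bounds to keep the constant finite. A secondary technical point is the perturbation bound for $\op P$ and $\op Q$: the prefactors must depend only on polynomial moments of the input measure, so that the induction can be closed with $j$-independent constants. Once these two ingredients are in place, the remainder of the argument is an assembly of the tools already isolated in the proof of~\cref{theorem:main_theorem}.
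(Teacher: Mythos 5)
Your outer argument coincides with the paper's: compare $\mu_j$ to the reference filter $\mu_j^{(0)}$ driven by the affine pair $(\Psi_0,h_0)$, exploit that $\op Q_0\op P_0\mu_j^{(0)}$ is Gaussian so that $\op G$ fixes it, invoke the local Lipschitz continuity of $\op G$ on $\mathcal P_R$ (this is not a new ingredient to be built from scratch --- it is exactly \cref{lemma:stabilityG}, quoted from \cite{carrillo2022ensemble}), and control $d_g(\op Q\op P\mu_j,\op Q_0\op P_0\mu_j^{(0)})$ by combining the Lipschitzness of $\op P,\op Q$ in the measure argument with perturbation bounds in the vector fields; your ``first-order expansion of the Gaussian kernels'' is precisely \eqref{eq:statement_P}--\eqref{eq:statement_Q} of \cref{lemma:3bounds}. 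Where you genuinely diverge is the induction closing $e_{j+1}=d_g(\mu_{j+1},\mu_{j+1}^{(0)})$. The paper isolates this as \cref{lemma:auxiliary_close_to_affine_new} and proves it with a direct perturbation estimate for the conditioning step, \eqref{eq:statement_BQ}, obtained by bounding the two normalization constants from below (hence the $\exp(C\mathcal R)$ prefactor); it never touches $\op T_j$. You instead insert $\op G\op Q\op P\mu_j$ and reuse \cref{lemma:missing_lemma} together with the $\op T_j$-stability of \cref{lemma:lipschitz_mean_field_affine}, i.e.\ you recycle the machinery of \cref{theorem:main_theorem}. This works, but with one caveat you should make explicit: \cref{lemma:lipschitz_mean_field_affine} assumes the full \cref{assumption:ensemble_kalman}, including \cref{assumpenum:assumption_lipschitz}, whereas \cref{prop:auxiliary_close_to_affine} does not assume $h$ Lipschitz (an $L^\infty$ perturbation of an affine $h_0$ need not be Lipschitz, and $\ell_h$ does not appear in the proposition's constant). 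To avoid smuggling in this hypothesis you must apply the $\op T_j$ lemma with the reference measure $\op Q_0\op P_0\mu_j^{(0)}$ in the slot requiring the fine density estimate of \cref{lemma:lipschitz_density_new} --- legitimate since $h_0$ is affine with $|h_0|_{C^{0,1}}\le\kappa_h$ --- and with the Gaussian $\op G\op Q\op P\mu_j\in\mathcal G_R$ in the $\mathcal P_R$ slot. With that assignment spelled out, your constants depend only on the quantities listed in the statement and the discrete Gr\"onwall step closes as you describe; the paper's route via \eqref{eq:statement_BQ} simply keeps the hypotheses minimal without any such bookkeeping.
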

\begin{proof}
In what follows $(\mu^0_{j})_{j \in \range{0}{J}}$ and $(\mu_{j})_{j \in \range{0}{J}}$ denote the true filtering distributions associated with functions~$(\Psi_0, \vect h_0)$ and~$(\Psi, \vect h)$, respectively,
initialized at the same Gaussian measure $\normal(m_0, C_0)$.
Furthermore, we let $\op P_0$ and $\op Q_0$ denote the kernel integral operators \eqref{eq:Markov_kernel} and \eqref{eq:definition_Q} defined by the specific vector fields~$(\Psi_0, \vect h_0).$
By~\cref{lemma:true_filter_bounded_moments}, the filtering distributions have bounded second moments. Let
    \[
        \mathcal R = \max_{j \in \range{0}{J-1}}
        \left( \left\lvert y_{j+1}^\dagger \right\rvert^2,\, 1 + \mathcal{M}_2\bigl(\mu_j^0\bigr),\, 1+ \mathcal{M}_2\bigl(\mu_j\bigr) \right).
    \]
    Throughout this proof, $C$ denotes a constant whose value is irrelevant in the context,
    depends only on the constants $m_0, C_0, \kappa_y, \kappa_{\Psi}, \kappa_h, \mat \Sigma, \mat \Gamma, j$
    (in particular, it does not depend on $\varepsilon$, $\vect \Psi$, $\vect h$), and may change from line to line.
    Fix $j \in \range{0}{J-1}$. Note that the filtering distribution defined by $(\Psi_0, \vect h_0)$ is Gaussian. Then,
    using the triangle inequality and Gaussianity of $\op Q_0 \op P_0 \mu_j^0$ we obtain
    \begin{align*}
        d_g(\op G \op Q \op P \mu_j, \op Q \op P \mu_j)
        &\leq d_g(\op G \op Q \op P \mu_j, \op G \op Q_0 \op P_0 \mu^0_j)
        + d_g(\op G\op Q_0 \op P_0 \mu^0_j, \op Q \op P \mu_j) \\
         &= d_g(\op G \op Q \op P \mu_j, \op G \op Q_0 \op P_0 \mu^0_j)
        + d_g(\op Q_0 \op P_0 \mu^0_j, \op Q \op P \mu_j).
    \end{align*}
    We note that since the filters have bounded first and second order polynomial moments, by~\cref{lemma:bound_on_Pmu_new,lemma:bound_on_QPmu_new} we may deduce that
    there exists $R\geq 1$ such that%
    \begin{equation}
        \label{eq:inclusion_moment_bounds}
        \forall j \in \range{0}{J-1}, \qquad
        \op P_0\mu^0_j, \in \mathcal P_R(\real^{d_u}),
        \qquad
        \op Q\op P\mu_j, \op Q_0\op P_0\mu^0_j \in \mathcal P_R(\real^{d_u}\times \real^{d_y}).
    \end{equation}%
    The second part of this display
    allows direct application of~\cref{lemma:stabilityG},
    which concerns the local Lipschitz continuity result for the Gaussian projection operator $\op G$, to obtain
    \begin{align*}
        d_g(\op G \op Q \op P \mu_j, \op Q \op P \mu_j)
         &\leq C d_g(\op Q_0 \op P_0 \mu^0_j, \op Q \op P \mu_j)\\
         &\leq C d_g(\op Q_0 \op P_0 \mu^0_j, \op Q \op P_0 \mu^0_j) + C d_g(\op Q \op P_0 \mu^0_j, \op Q \op P \mu_j).
    \end{align*}
    Using~\eqref{eq:statement_Q} from~\cref{lemma:3bounds},
        noting that since $\op P_0 \mu^0 \in \mathcal P_R(\real^{\dimu})$ by~\eqref{eq:inclusion_moment_bounds}
        it holds that~$\mathcal M_2(\op P_0 \mu^0_j) \leq \dimu R$, and using the Lipschitz continuity of~$\op Q$ (\cref{lemma:lipschitz_Q})
    we deduce that
    \begin{align*}
        d_g(\op G \op Q \op P \mu_j, \op Q \op P \mu_j)
        &\leq C \varepsilon (1 + \dimu R) + C d_g(\op P_0 \mu^0_j, \op P \mu_j) \\
        &\leq C \varepsilon (1 + \dimu R) + C d_g\bigl(\op P_0 \mu^0_{j}, \op P \mu^0_{j}\bigr) + C d_g\bigl(\op P \mu^0_{j}, \op P \mu_{j}\bigr) \\
        &\leq C \varepsilon (1 + \dimu R) + C \varepsilon \mathcal R + C d_g(\mu^0_j, \mu_j),
    \end{align*}
    where the second inequality follows by the triangle inequality and the third inequality follows from
    bounding~$d_g(\op P_0 \mu^0_j, \op P \mu_j)$
    using~\eqref{eq:statement_P} from~\cref{lemma:3bounds}
    and from the Lipschitz continuity of~$\op P$ (\cref{lemma:lipschitz_Q}). The statement then follows from~\cref{lemma:auxiliary_close_to_affine_new}.
\end{proof}

\begin{theorem} [Error Estimate: Mean Field Ensemble Kalman Filter]
    \label{thm:error_estimate_mfenkf}
    Assume that the probability measures $(\mu_j)_{j \in \range{0}{J}}$ and  $(\mu^{\kalman}_j)_{j \in \range{0}{J}}$ are given respectively by the dynamical systems~\eqref{eq:true_filtering2} and~\eqref{eq:compact_mf_enkf}
    initialized at the Gaussian measure $\mu_0 = \mu_0^{\kalman}\in~\mathcal G(\real^{\dimu})$.
    That~is,
    \[
        \mu_{j+1} = \op B_j \op Q \op P \mu_j, \qquad
        \mu^{\kalman}_{j+1} = \op T_j \op Q \op P \mu_j^{\kalman}.
    \]
    Suppose that the data~$Y^\dagger =\{y_j^\dagger\}_{j=1}^J$ and the matrices
    $(\mat \Sigma, \mat \Gamma)$ satisfy~\cref{assumpenum:assumption1,assumpenum:assumption5}.
    Let the vector fields $\vect \Psi_0,\vect h_0$ be affine functions satisfying~\cref{assumpenum:affine_assumption1,assumpenum:affine_assumption2}, respectively,
    while also satisfying~\cref{assumpenum:assumption2,assumpenum:assumption3} with some constants $\kappa_{\vect \Psi},\kappa_{\vect h} > 0$.
    Additionally, let~$\ell_{\vect \Psi}>0$ be a constant.
    Then there exists a constant ~$C = C\bigl(\mathcal{M}_q(\mu_0),\kappa_y, \kappa_{\vect \Psi}, \kappa_{\vect h}, \ell_{\vect h}, \mat \Sigma, \mat \Gamma, J\bigr)$, where~$q\coloneqq {2\cdot\max\{3+\dimu, 4+\dimy\}}$,
    such that for all~$\varepsilon\in[0,1]$ and all $(\vect \Psi,\vect h) \in B_{L^\infty}\bigl((\Psi_0,h_0),\varepsilon \bigr)$ with $h$ satisfying~\cref{assumpenum:assumption_lipschitz},
    it holds that
    \[
        d_g(\mu^{\kalman}_J, \mu_J) \leq C\varepsilon.
    \]
\end{theorem}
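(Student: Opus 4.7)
The plan is to combine the two preceding results of this subsection. \cref{theorem:main_theorem} bounds $d_g(\mu^{\kalman}_J, \mu_J)$ by $\max_{j} d_g(\op Q \op P \mu_j, \op G \op Q \op P \mu_j)$, while \cref{prop:auxiliary_close_to_affine} controls precisely this maximum by $C \varepsilon$ when $(\Psi, h)$ lies in an $L^\infty$ ball of radius $\varepsilon$ around the affine pair $(\Psi_0, h_0)$. Concatenating the two bounds yields the desired estimate.

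Before invoking either result, the first step is to verify that the perturbed pair $(\Psi, h)$ satisfies \cref{assumption:ensemble_kalman} with constants uniform in $\varepsilon \in [0,1]$. The bounds~\cref{assumpenum:assumption1,assumpenum:assumption5} are given directly, and~\cref{assumpenum:assumption_lipschitz} is part of the hypothesis. The linear growth conditions~\cref{assumpenum:assumption2,assumpenum:assumption3} transfer from $(\Psi_0, h_0)$ to $(\Psi, h)$ via the triangle inequality: since $\|\Psi - \Psi_0\|_{L^\infty} \le \varepsilon \le 1$, one has
\[
    |\Psi(u)| \le |\Psi_0(u)| + 1 \le \kappa_{\vect \Psi}(1 + |u|) + 1 \le (\kappa_{\vect \Psi} + 1)(1 + |u|),
\]
and analogously for $h$. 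Thus $(\Psi, h)$ satisfies \cref{assumption:ensemble_kalman} with modified constants $\kappa_{\vect \Psi} + 1$ and $\kappa_{\vect h} + 1$, which depend only on quantities listed in the theorem statement.

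With the hypotheses verified, applying \cref{theorem:main_theorem} yields
\[
    d_g(\mu^{\kalman}_J, \mu_J) \le C_1 \max_{j \in \range{0}{J-1}} d_g(\op Q \op P \mu_j, \op G \op Q \op P \mu_j),
\]
while \cref{prop:auxiliary_close_to_affine}, applied with the same affine reference pair $(\Psi_0, h_0)$, gives
\[
    \max_{j \in \range{0}{J-1}} d_g(\op G \op Q \op P \mu_j, \op Q \op P \mu_j) \le C_2 \varepsilon.
\]
Both $C_1$ and $C_2$ are of the form claimed in the theorem; in particular, the moment order $q = 2 \cdot \max\{3 + \dimu, 4 + \dimy\}$ in the statement dominates all moment orders appearing in $C_1$ and $C_2$, and since $\mu_0$ is Gaussian all such moments are finite. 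The conclusion follows by setting $C = C_1 C_2$.

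The substantive work lies upstream rather than in the present assembly: \cref{theorem:main_theorem} rests on precise control of the local Lipschitz behaviour of the transport operator $\op T_j$ via~\cref{lemma:lipschitz_mean_field_affine} (the source of the exponent $q$), and \cref{prop:auxiliary_close_to_affine} requires propagating the linear-in-$\varepsilon$ perturbation through both the prediction and observation steps while exploiting Gaussianity of the unperturbed filter. Once those two results are in hand, the main obstacle here is merely the routine bookkeeping needed to confirm that all constants remain uniform in $\varepsilon$, which is carried out by the argument above.
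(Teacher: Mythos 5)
Your proposal is correct and follows essentially the same route as the paper: apply \cref{theorem:main_theorem} to reduce to the Gaussian-projection error, then bound that error by $C\varepsilon$ via \cref{prop:auxiliary_close_to_affine}. Your explicit check that the perturbed pair $(\Psi,h)$ inherits \cref{assumpenum:assumption2,assumpenum:assumption3} with constants $\kappa_{\vect\Psi}+1$, $\kappa_{\vect h}+1$ is a detail the paper leaves implicit, but it does not change the argument.
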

\begin{proof}
    Since~\cref{assumption:ensemble_kalman} is satisfied, it is possible to apply the result of~\cref{theorem:main_theorem} to deduce that there exists~$C = C\bigl(\mathcal{M}_{2\cdot\max\{3+\dimu, 4+\dimy\}}(\mu_0),J,\kappa_y, \kappa_{\vect \Psi}, \kappa_{\vect h}, \ell_{\vect h}, \mat \Sigma, \mat \Gamma\bigr)$
    such that
    \[
        d_g(\mu^{\kalman}_J, \mu_J) \leq C  \max_{j \in \range{0}{J-1}} d_g( \op Q \op P \mu_j, \op G \op Q \op P \mu_j).
    \]
    Additionally, since $(\vect \Psi,\vect h) \in B_{L^\infty}\bigl((\Psi_0,h_0),\varepsilon \bigr)$ for $\vect \Psi_0,\vect h_0$ satisfying~\cref{assumpenum:affine_assumption1,assumpenum:affine_assumption2}, and moreover ~\cref{assumpenum:assumption2,assumpenum:assumption3}, we may apply~\cref{prop:auxiliary_close_to_affine} to deduce the result.
\end{proof}

\subsection{Error Estimate: Finite Particle Ensemble Kalman Filter}
\label{ssec:AEnKF}

In this subsection, we combine the results from the work in \cite{le2009large} with stability \cref{theorem:main_theorem}, together with approximation \cref{thm:error_estimate_mfenkf},
to derive a quantitative error estimate between the
finite particle ensemble Kalman filter and the true filter in the nonlinear setting. In order to define an appropriate metric we introduce the following class of vector fields.

\renewcommand{\theassumption}{P1}
\begin{assumption}
\label{assumption:metric_vector_fields}
The vector field $\phi:\real^{\dimu}\to \real$ satisfies for any $u,v\in \real^{\dimu}$ the condition
\[
\left|\phi(u) - \phi(v)\right| \leq L_\phi |u-v|\bigl(1+|u|^{{\varsigma}}+|v|^{{\varsigma}}\bigr),
\]
for some ${\varsigma}\geq 0$ and for some $L_\phi > 0$. We note that for any such $\phi$, there exists $R_\phi>0$ which depends on $L_\phi$ so that $|\phi(u)|\leq R_\phi\bigl(1+|u|^{{\varsigma}+1} \bigr)$ for any $u\in\real^{\dimu}$.
\end{assumption}

We will prove various technical lemmas under the~\cref{assumption:metric_vector_fields}; these may be useful beyond the confines of this paper. However for our theorems we use the
more specific~\cref{assumption:metric_vector_fields_particular} which enables the control of first and second moments.


\renewcommand{\theassumption}{P2}
\begin{assumption}
\label{assumption:metric_vector_fields_particular}
The vector field $\phi:\real^{\dimu}\to \real$ satisfies for any $u,v\in \real^{\dimu}$ the condition
\[
\left|\phi(u) - \phi(v)\right| \leq L_\phi |u-v|\bigl(1+|u|+|v|\bigr),
\]
for some $L_\phi > 0$. Note that for any such $\phi$, there exists $R_\phi>0$ depending on $L_\phi$ so that $|\phi(u)|\leq R_\phi\bigl(1+|u|^2 \bigr)$ for any $u\in\real^{\dimu}$.
\end{assumption}

\begin{theorem}
    [Error Estimate: Finite Particle Ensemble Kalman Filter]
    \label{theorem:enkf_theorem}
    Assume that the probability measures $(\mu_j)_{j \in \range{0}{J}}$ and $(\mu^{\kalmanN}_j)_{j \in \range{0}{J}}$ are obtained respectively from the dynamical systems~\eqref{eq:true_filtering2} and~\eqref{eq:kalman_measure},
    initialized at the Gaussian probability measure $\mu_0\in~\mathcal G(\real^{\dimu})$ and at the empirical measure~$\mu_{0}^{\kalmanN} =\frac1N\sum_{i=1}^N\delta_{u_0^{(i)}}$ for $u_0^{(i)}\sim \mu_0$ i.i.d.\ samples, respectively.
    That is,
    \[
        \mu_{j+1} = \op B_j \op Q \op P \mu_j, \qquad \mu_{j+1}^{\kalmanN} = \frac1N\sum_{i=1}^N\delta_{u_{j+1}^{(i)}},
    \]
    where $u_{j+1}^{(i)}$ evolve according to the iteration in~\eqref{eq:ensemble_kalman_particle}.
    Suppose that the data~$Y^\dagger =\{y_j^\dagger\}_{j=1}^J$ and the matrices~$(\mat \Sigma, \mat \Gamma)$
    satisfy~\cref{assumpenum:assumption1,assumpenum:assumption5}.
    Assume that the vector field $\vect h$ is linear,
    and let $\kappa_h, \ell_h$ be positive constants such that~\cref{assumpenum:assumption3,assumpenum:assumption_lipschitz} are satisfied.
    Furthermore, let the vector field $\vect \Psi_0$ be an affine function satisfying~\cref{assumpenum:affine_assumption1}
    as well as~\cref{assumpenum:assumption2} with $\kappa_{\Psi} > 0$.
    Additionally, let~$\ell_{\vect \Psi}>0$ be a constant.
    Then,
    there exists a constant $C = C\bigl(\mathcal{M}_q(\mu_0),R_\phi,L_\phi,\kappa_y, \kappa_{\vect \Psi}, \kappa_{\vect h}, \ell_{\vect h},\ell_{\vect \Psi}, \mat \Sigma, \mat \Gamma, J\bigr)$,
    where $q\coloneqq {2\cdot\max\{3+\dimu, 4+\dimy, 2\cdot 4^{J}\}}$,
    such that for all~$\varepsilon\in[0,1]$ and all $\vect \Psi \in B_{L^\infty}\bigl(\Psi_0,\varepsilon \bigr)$
    satisfying $|\Psi|_{C^{0,1}} \leq \ell_{\vect \Psi} < \infty$,
    the following bound holds for any $\phi$ satisfying~\cref{assumption:metric_vector_fields_particular}:
    \[
       \left(\mathbb{E}~\Bigl|\mu^{\kalmanN}_J[\phi] -\mu_J[\phi]\Bigr|^2\right)^{1/2} \leq C  \Bigl(\frac{1}{\sqrt{N}} + {\varepsilon} \Bigr).
    \]
\end{theorem}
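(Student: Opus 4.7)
The natural strategy is to interpose the mean field ensemble Kalman filter $\mu_J^{\kalman}$ and decompose
\[
\bigl(\mathbb{E}\bigl|\mu_J^{\kalmanN}[\phi] - \mu_J[\phi]\bigr|^2\bigr)^{1/2}
\leq
\bigl(\mathbb{E}\bigl|\mu_J^{\kalmanN}[\phi] - \mu_J^{\kalman}[\phi]\bigr|^2\bigr)^{1/2}
+ \bigl|\mu_J^{\kalman}[\phi] - \mu_J[\phi]\bigr|,
\]
where the second term is deterministic. Since $\phi$ satisfies \cref{assumption:metric_vector_fields_particular}, one has $|\phi(u)| \leq R_\phi(1+|u|^2) = R_\phi\, g(u)$, so by the definition of the weighted total variation metric $|\mu_J^{\kalman}[\phi] - \mu_J[\phi]| \leq R_\phi\, d_g(\mu_J^{\kalman}, \mu_J)$. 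Taking $h_0 \coloneqq h$, which is linear and hence trivially satisfies \cref{assumpenum:affine_assumption2}, one applies \cref{thm:error_estimate_mfenkf} directly to obtain $d_g(\mu_J^{\kalman}, \mu_J) \leq C\varepsilon$, handling the deterministic contribution.

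\textbf{Propagation of chaos.} For the stochastic term, my plan is to follow the strategy of \cite{le2009large}, adapted to near-affine $\vect\Psi$, by induction on $j$. The inductive hypothesis is a bound of the form $\bigl(\mathbb{E}|\mu_j^{\kalmanN}[\phi] - \mu_j^{\kalman}[\phi]|^2\bigr)^{1/2} \leq C_j/\sqrt N$ for test functions of a certain polynomial growth class. To advance from $j$ to $j+1$, I introduce the intermediate measure $\widetilde\mu_{j+1}^{\kalmanN}$ obtained by applying the mean field analysis map $\mathscr T(\cdot,\cdot;\op Q\op P\mu_j^{\kalman}, y_{j+1}^\dagger)$ to the forecast ensemble generated from the particles of $\mu_j^{\kalmanN}$. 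The difference $\widetilde\mu_{j+1}^{\kalmanN}[\phi] - \mu_{j+1}^{\kalman}[\phi]$ is controlled by the induction hypothesis together with Lipschitz continuity of $\op T_j \op Q \op P$ (\cref{lemma:lipschitz_mean_field_affine,lemma:lipschitz_Q,lemma:lipschitz_p}) and a standard Monte Carlo fluctuation from the noise sampling, conditional on $\{u^{(i)}_j\}$. The difference $\widetilde\mu_{j+1}^{\kalmanN}[\phi] - \mu_{j+1}^{\kalmanN}[\phi]$ reflects only the substitution of the empirical Kalman gain $\mathcal C^{uh}(\widehat\pi_{j+1}^{\kalmanN})(\mathcal C^{hh}(\widehat\pi_{j+1}^{\kalmanN}) + \mat\Gamma)^{-1}$ for its mean field counterpart. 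Linearity of $h$ reduces $\mathcal C^{hh}$ to $H\mathcal C^{uu}H^\t$, so combined with the resolvent identity $\mat A^{-1} - \mat B^{-1} = \mat A^{-1}(\mat B - \mat A)\mat B^{-1}$ and the uniform lower bound $\mat\Gamma \succcurlyeq \gamma\mat I_{\dimy}$ from \cref{assumpenum:assumption5}, the gain perturbation reduces to an $L^2$ control of $\mathcal C^{uu}$-fluctuations of the particle system, which is bounded via fourth moments of the particles.

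\textbf{Moment control and main obstacle.} Closing the induction requires uniform-in-$N$ bounds on $\mathbb{E}\mathcal M_q(\mu_j^{\kalmanN})$ for sufficiently large $q$, established by a parallel induction using the linear growth conditions \cref{assumpenum:assumption2,assumpenum:assumption3} together with boundedness of the empirical Kalman gain (again via $\gamma > 0$); the argument parallels that of \cref{theorem:approximate_filter_bounded_moments} but must additionally track Monte Carlo fluctuations of the empirical covariance. The principal obstacle is the moment budget: each application of the gain perturbation estimate passes through Cauchy--Schwarz and therefore roughly squares the polynomial degree of the test function that the induction must control. Starting from degree $2$ (Assumption P2) and iterating $J$ times, one is forced to track test functions of degree comparable to $2^J$, producing the moment requirement of order $2 \cdot 4^J$ in the stated constant $q$. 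This same budget constraint is what forces the linearity assumption on $h$: a general Lipschitz $h$ would let $\mathcal C^{hh}$ fluctuations depend on arbitrarily high empirical moments of the forecast ensemble, preventing inductive closure over $J$ iterations.
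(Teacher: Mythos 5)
Your top-level argument coincides with the paper's: the triangle inequality through the mean field filter $\mu^{\kalman}_J$, the observation that $|\phi|\leq R_\phi g$ so the deterministic term is $R_\phi\, d_g(\mu^{\kalman}_J,\mu_J)$, and the application of \cref{thm:error_estimate_mfenkf} with $h_0=h$ to get the $C\varepsilon$ contribution. For the Monte Carlo term the paper simply invokes \cref{lemma:convergence_to_MF}, whose proof is a \emph{pathwise synchronous coupling}: $N$ i.i.d.\ replicas of the mean field dynamics are driven by the exact same noises $\xi^{(i)}_j,\eta^{(i)}_{j+1}$ and started at the same initial samples, and the induction is carried out on the particle-wise quantities $\bigl(\expect|u^{(1)}_j-\overline u^{(1)}_j|^p\bigr)^{1/p}$, with the gain difference split through the empirical measure of the i.i.d.\ replicas so that the final fluctuation is a genuine law-of-large-numbers statement for i.i.d.\ variables. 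Your sketch replaces this with a weak-error induction on $\bigl(\expect|\mu^{\kalmanN}_j[\phi]-\mu^{\kalman}_j[\phi]|^2\bigr)^{1/2}$ over a class of test functions, with a semi-mean-field intermediate system.

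There is a concrete problem with that replacement as written. You propose to close the induction using the $d_g$-Lipschitz continuity of $\op T_j\op Q\op P$ (\cref{lemma:lipschitz_mean_field_affine,lemma:lipschitz_Q,lemma:lipschitz_p}) applied to $\mu^{\kalmanN}_j$ versus $\mu^{\kalman}_j$. But $d_g$ between an $N$-atom empirical measure and an absolutely continuous measure never decays: taking $f=g\one_A-g\one_{A^c}$ with $A$ the atom set gives $d_g(\mu^{\kalmanN}_j,\mu^{\kalman}_j)\geq \mu^{\kalmanN}_j[g]+\mu^{\kalman}_j[g]\geq 2$, and \cref{lemma:lipschitz_mean_field_affine} moreover requires one argument to lie in the range of $\op Q\op P$, which an empirical measure does not. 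So the induction must be formulated per fixed test function (after conditioning on $\{u^{(i)}_j\}$ the analysis step contributes $\mu^{\kalmanN}_j[\tilde\phi]-\mu^{\kalman}_j[\tilde\phi]$ for a derived quadratically growing $\tilde\phi$, plus a conditional fluctuation), not in $d_g$. Even then, the gain perturbation forces you to control $\lVert\mathcal C^{uu}(\widehat\pi^{\kalmanN}_{j+1})-\mathcal C^{uu}(\widehat\pi^{\kalman}_{j+1})\rVert$ in $L^{2p}$ for $p>1$, i.e.\ higher-moment concentration of empirical averages of \emph{interacting, non-i.i.d.} particles, which requires Rosenthal/Marcinkiewicz--Zygmund machinery that the synchronous coupling avoids entirely (there one only needs i.i.d.\ law-of-large-numbers bounds as in \eqref{eq:two_terms_gain} plus the pathwise differences). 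Your diagnosis of the roles of the moment budget ($q\sim 4^{J+1}$ from iterating H\"older) and of the linearity of $h$ (reducing $\mathcal C^{hh}$ to $H\mathcal C^{uu}H^\t$) is consistent with the paper, but the propagation-of-chaos ingredient should be run as a coupling at the level of particles rather than as a distance between the empirical and mean field laws.
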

The proof presented hereafter relies on the following elements.
\begin{enumerate}

    \item \label{item:triangle} We apply the triangle inequality in order to employ two distinct results concerning the mean field ensemble Kalman filter. The proof thus involves quantifying the error between finite particle ensemble Kalman filter and the mean field ensemble Kalman filter, as discussed in~\cref{item:mean_field_convergence}, and the error between the mean field ensemble Kalman filter and the true filter, as discussed in \cref{item:3}.

    \item \label{item:mean_field_convergence} The work in \cite{le2009large} establishes a
    Monte Carlo error estimate, with rate of $1/\sqrt{N}$,
    between the empirical measure $\mu^{\kalmanN}$,
    representing the particle ensemble Kalman filter, and the measure $\mu^{\kalman}$ describing the evolution of the mean field ensemble Kalman filter.
    In particular, this holds under~\cref{assumpenum:assumption1,assumpenum:assumption5} on the data and covariances of the noise processes,
    the linearity of the vector field $\vect h$ and~\cref{assumpenum:assumption2} on $\vect \Psi$ with the additional assumption that $|\Psi|_{C^{0,1}} \leq \ell_{\vect \Psi} < \infty$.
    In~\cref{lemma:convergence_to_MF} we provide a self-contained proof of~\cite[Theorem 5.2]{le2009large}
    to gain insight into the dependence of the constant prefactor multiplying $1/\sqrt{N}$ on
    the parameters of the Gaussian initial condition and the number of steps~$J$.

    \item \label{item:3} We assume that the data~$Y^\dagger =\{y_j^\dagger\}_{j=1}^J$ and the matrices
    $(\mat \Sigma, \mat \Gamma)$ satisfy~\cref{assumpenum:assumption1,assumpenum:assumption5}, and assume that $\vect h$ satisfies~\cref{assumpenum:affine_assumption2}.
    Furthermore, we assume $\vect \Psi$ satisfies~\cref{assumpenum:assumption2}
    and $\vect \Psi \in B_{L^{\infty}}(\vect \Psi_0, \varepsilon)$ with $\vect \Psi_0$ satisfying~\cref{assumpenum:affine_assumption1}. These assumptions
    allow us to apply the the result from~\cref{thm:error_estimate_mfenkf}.

\end{enumerate}

\begin{proof} Recall that $\mu_j^{\kalman}$ is the mean field ensemble Kalman filter, here initialized at the same Gaussian $\mu_0$ as the true filter.
    We fix a function $\phi$ satisfying~\cref{assumption:metric_vector_fields_particular} and apply the triangle inequality to deduce that
    \begin{equation}
    \label{eq:enkf_error_triangle}
        \left(\mathbb{E}~\Bigl|\mu^{\kalmanN}_J[\phi] -\mu_J[\phi]\Bigr|^2\right)^{1/2} \leq \left(\mathbb{E}~\Bigl|\mu^{\kalmanN}_J[\phi] -\mu^{\kalman}_J[\phi]\Bigr|^2\right)^{1/2} + \left(\mathbb{E}~\Bigl|\mu^{\kalman}_J[\phi] -\mu_J[\phi]\Bigr|^2\right)^{1/2}.
    \end{equation}
    Since $\mu^{\kalman}_J$ and $\mu_J$ are deterministic probability measures, it holds that
    \[\left(\mathbb{E}~\Bigl|\mu^{\kalman}_J[\phi] -\mu_J[\phi]\Bigr|^2\right)^{1/2} = \Bigl|\mu^{\kalman}_J[\phi] -\mu_J[\phi]\Bigr| . \]
    Since $\phi$ satisfies~\cref{assumption:metric_vector_fields_particular}, it follows that
\[\Bigl|\mu^{\kalman}_J[\phi] -\mu_J[\phi]\Bigr| \leq  \sup_{|\phi|\leq R_\phi(1+|u|^2)}\Bigl|\mu^{\kalman}_J[\phi] -\mu_J[\phi]\Bigr|= R_\phi \cdot d_g\bigl(\mu^{\kalman}_J, \mu_J\bigr).\]
We note that \cref{assumption:ensemble_kalman} holds as $\vect h$ is assumed to be affine; 
since we additionally assume that $\vect \Psi \in B_{L^\infty}\bigl(\Psi_0,\varepsilon \bigr)$, where $\Psi_0$ is an affine function, we may apply the result of~\cref{thm:error_estimate_mfenkf} to deduce that
    \begin{equation}
    \label{eq:enkf_error_triangle1}
       d_g\bigl(\mu^{\kalman}_J, \mu_J\bigr) \leq C{\varepsilon},
    \end{equation}
    where $C$ is a constant depending on $\mathcal{M}_{2\cdot\max\{3+\dimu, 4+\dimy\}}(\mu_0),J,\kappa_y, \kappa_{\vect \Psi}, \kappa_{\vect h}, \ell_{\vect h}, \mat \Sigma, \mat \Gamma$.

    The fact that $\vect h$ is assumed to be linear and the additional assumption that $|\Psi|_{C^{0,1}} \leq \ell_{\vect \Psi} < \infty$ allows us to apply the result of~\cref{lemma:convergence_to_MF}, so that
    \begin{equation}
    \label{eq:enkf_error_triangle2}
       \left(\mathbb{E}~\Bigl|\mu^{\kalmanN}_J[\phi] -\mu^{\kalman}_J[\phi]\Bigr|^2\right)^{1/2} \leq \frac{C}{\sqrt{N}},
    \end{equation}
    where $C$ is a constant depending on $\mathcal{M}_{4^{J+1}}(\mu_0),J,R_\phi,L_\phi,\kappa_y, \kappa_{\vect \Psi}, \kappa_{\vect h}, \mat \Sigma, \mat \Gamma$.
    In view of~\eqref{eq:enkf_error_triangle}, combining~\eqref{eq:enkf_error_triangle1} and \eqref{eq:enkf_error_triangle2} yields the desired result.
\end{proof}

\section{Discussion and Future Directions}
\label{sec:discussion}

In this paper we have presented the first analysis, in the setting of a filtering problem defined by nonlinear state space dynamics, quantifying the error between the empirical measure obtained by the finite particle ensemble Kalman filter and the true filtering distribution. The analysis for the EnKF outlined is based on the proof methodology developed for particle filters in \cite{rebeschini2015can} and developed for the mean field ensemble Kalman filter in \cite{carrillo2022ensemble}.
Our work goes beyond \cite{carrillo2022ensemble} by considering finite particle
filters and by allowing for dynamical models and observation operators that are unbounded. The work builds substantially on the new avenue for analysis of the ensemble Kalman methodology that was introduced in  \cite{carrillo2022ensemble} but
leaves open numerous avenues of investigation for further work; we detail these.

\begin{enumerate}
    \item As surveyed in \cite{calvello2025ensemble}, the ensemble Kalman filtering methodology may be used for solving inverse problems and for sampling; it is of interest to extend the analysis in our paper to the ensemble Kalman based inversion algorithms outlined in that paper.

    \item There is a substantial body of literature studying the continuous time limits of ensemble Kalman methods; see \cite{calvello2025ensemble} for a review. Performing analysis
    analogous to that presented here, but in the continuous time setting, would be of interest.

    \item For large scale applications, there has been recent wide interest in replacing the dynamical model $\vect \Psi$, representing the solution operator obtained via a high fidelity numerical solver, with a cheap to evaluate surrogate. Multifidelity ensemble methods
    \cite{bach2023multi} allow the use of a small number of particles evolved according to the high fidelity solver and a large number evolved according to the surrogate. Extending our error analysis to incorporate the effect of model error would be of interest in this context.
\end{enumerate}

\vspace{0.1in}
\paragraph{Acknowledgments}
The research of PM is supported by the projects SWIDIMS (ANR-20-CE40-0022) and CONVIVIALITY (ANR-23-CE40-0003) of the French National Research Agency.
The work of AMS is supported by a Department of Defense Vannevar Bush Faculty Fellowship,
and by the SciAI Center, funded by the Office of Naval Research (ONR), Grant Number N00014-23-1-2729. The work of EC was supported by the Resnick Sustainability Institute, and also by the SciAI Center and by the Vannevar Bush Faculty Fellowship held by AMS.
UV is partially supported by the European Research Council (ERC) under the EU Horizon 2020 programme (grant agreement No 810367),
and by the Agence Nationale de la Recherche under grants ANR-21-CE40-0006 (SINEQ) and ANR-23-CE40-0027 (IPSO). The work was initiated during The Jacques-Louis Lions Lectures given by AMS at  Sorbonne Universit\'e in December 2023; the authors are grateful to Albert Cohen for hosting this event.

\printbibliography

\appendix


\section{Auxiliary Results}
\label{app:A2}

We first state and prove a standard result that will be used throughout the paper.
\begin{lemma}
    \label{lemma:auxiliary_ineq_second_moment}
    Let $X$ be a random vector in $\real^{\dimu}$ with finite second moment.
    Then it holds that
    \begin{equation}
        \label{eq:second_moment_vector}
        \expect \Bigl[ \bigl(X - \expect [X]\bigr)\bigl(X - \expect [X]\bigr)^\t\Bigr] = \expect \bigl[XX^\t\bigr] - \expect [X] \expect [X]^\t
    \end{equation}
    and
    \begin{equation}
        \label{eq:inequality_second_moment}
        \forall \vect a \in \real^{\dimu},
        \qquad
        \expect \Bigl[ \bigl(X - \vect a\bigr)\bigl(X - \vect a\bigr)^\t\Bigr]
        \succcurlyeq \expect \Bigl[ \bigl(X - \expect [X] \bigr)\bigl(X - \expect [X]\bigr)^\t\Bigr].
    \end{equation}
\end{lemma}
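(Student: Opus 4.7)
The plan is to handle the two claims in turn; both are classical identities from the scalar case lifted to the vector-valued setting, so the proof amounts to careful bookkeeping with outer products and the linearity of expectation.

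For the identity~\eqref{eq:second_moment_vector}, I would simply expand the outer product
\[
    \bigl(X - \expect[X]\bigr)\bigl(X - \expect[X]\bigr)^\t
    = XX^\t - X\,\expect[X]^\t - \expect[X]\,X^\t + \expect[X]\,\expect[X]^\t,
\]
and then apply the expectation, using linearity together with the fact that $\expect[X]$ is a deterministic vector that can be pulled out of the expectation in the cross terms. Two of the cross terms combine into $-2\,\expect[X]\,\expect[X]^\t$, which cancels half of the last term, leaving the claimed identity. The finite second moment assumption is exactly what is needed to ensure every expectation above is well defined.

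For the inequality~\eqref{eq:inequality_second_moment}, the key decomposition is to write
\[
    X - \vect a = \bigl(X - \expect[X]\bigr) + \bigl(\expect[X] - \vect a\bigr),
\]
and then expand the outer product $(X - \vect a)(X - \vect a)^\t$ into four terms. The two cross terms, once the expectation is taken, vanish because they are multiplied by the zero-mean random vector $X - \expect[X]$. What remains is
\[
    \expect\Bigl[(X-\vect a)(X-\vect a)^\t\Bigr]
    = \expect\Bigl[\bigl(X-\expect[X]\bigr)\bigl(X-\expect[X]\bigr)^\t\Bigr]
    + \bigl(\expect[X]-\vect a\bigr)\bigl(\expect[X]-\vect a\bigr)^\t.
\]
Since the second summand is a rank-one outer product of a vector with itself, it is positive semi-definite, which yields the desired ordering in the Loewner sense $\succcurlyeq$.

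There is no genuine obstacle here: the only points to be careful about are (i) writing $\vect a$ and $\expect[X]$ as deterministic vectors so that the linearity argument is unambiguous, and (ii) recalling that for any $\vect v \in \real^{\dimu}$ the matrix $\vect v \vect v^\t$ is positive semi-definite, which is what closes the inequality. Both parts hold under the sole hypothesis of a finite second moment.
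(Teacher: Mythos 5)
Your proof is correct and follows essentially the same route as the paper: the paper's proof consists precisely of the decomposition $X-\vect a = (X-\expect[X])+(\expect[X]-\vect a)$, expansion of the outer product, and the observation that the cross terms vanish, which yields both claims (the first identity being the case of a rearrangement with $\vect a = 0$). No gaps.
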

\begin{proof}
    The statements follow from the following equalities:
    \begin{align*}
        \expect \Bigl[ (X - \vect a)(X - \vect a)^\t\Bigr]
        &= \expect \Bigl[ \Bigl(\bigl(X - \expect [X]\bigr) + \bigl(\expect [X] - \vect a\bigr)\Bigr) \Bigl(\bigl(X - \expect [X]\bigr) + \bigl(\expect [X] - \vect a\bigr)\Bigr)^\t\Bigr] \\
        &= \expect \Bigl[ \bigl(X - \expect [X]\bigr) \bigl(X - \expect [X]\bigr)^\t\Bigr] + \bigl(\expect [X] - \vect a\bigr)\bigl(\expect [X] - \vect a\bigr)^\t. \qedhere
    \end{align*}
\end{proof}

\begin{lemma}
    \label{lemma:gaussians_relation_1_and_inf_norms}
    Let $g(\placeholder; \vect m, \vect S)$ be the Lebesgue density of $\normal(\vect m, \mat S)$ and $\mathcal S^{n}_{\alpha}$ the set of symmetric $n \times n$ matrices~$\mat M$ satisfying
    \begin{equation*}
        \frac{1}{\tau} \mat I_{n}  \preccurlyeq \mat M \preccurlyeq \tau \mat I_{n} .
    \end{equation*}
    Then for any $n \in \nat^+$ and $\tau \geq 1$, there is $L_{n,\tau} > 0$ such that
    for all parameters $(c_1, \vect m_1, \mat S_1) \in \real \times \real^{n} \times \mathcal S^{n}_{\tau}$ and $(c_2, \vect m_2, \mat S_2) \in \real \times \real^{n} \times \mathcal S^{n}_{\tau}$ it holds that
    \begin{equation*}
        \norm{h}_{\infty} \leq L_{n,\tau} \norm{h}_{1},
        \qquad h(y) = c_1 g(y; \vect m_1, \mat S_1) - c_2 g(y; \vect m_2, \mat S_2).
    \end{equation*}
\end{lemma}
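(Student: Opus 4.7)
The plan is a compactness and contradiction argument that exploits the scale invariance of the inequality and the uniform smoothness of Gaussians with covariance in $\mathcal S^n_\tau$. Since $\|h\|_\infty \le L_{n,\tau}\|h\|_1$ is homogeneous of degree one in $(c_1,c_2)$, it suffices to show the existence of $c_{n,\tau}>0$ with $\|h\|_1 \ge c_{n,\tau}$ whenever $\|h\|_\infty=1$. I assume, for contradiction, that there is a sequence of admissible parameters producing $h_k$ with $\|h_k\|_\infty=1$ and $\|h_k\|_1\to 0$. A preliminary Fourier bound, using $\xi^\t \mat S_{i,k}\xi \ge |\xi|^2/\tau$ to estimate the two Gaussian characteristic functions, gives $|\hat h_k(\xi)| \le A_k\,\mathrm e^{-|\xi|^2/(2\tau)}$ with $A_k:=|c_{1,k}|+|c_{2,k}|$; combining with $\|h\|_\infty\le(2\pi)^{-n}\|\hat h\|_1$, this yields $A_k \ge 1/C_{n,\tau}$. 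I then split the argument according to whether $A_k$ remains bounded or diverges.

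If $A_k$ stays bounded, I translate each $h_k$ so that a point of maximum of $|h_k|$ lies at the origin. The pointwise Gaussian decay bound $g(y;\vect m,\mat S) \le C_{n,\tau}\,\mathrm e^{-|y-\vect m|^2/(2\tau)}$, valid for $\mat S\in\mathcal S^n_\tau$, forces $|\vect m_{i,k}|$ to stay bounded, otherwise $|h_k(0)|$ would be exponentially small. Since $\mathcal S^n_\tau$ is compact, all parameters converge along a subsequence and $h_k\to h^*$ in $C^\infty_{\mathrm{loc}}$. Then $|h^*(0)|=1$ while $\|h^*\|_1 \le \liminf \|h_k\|_1 = 0$ by Fatou's lemma, a contradiction.

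If $A_k\to\infty$, I rescale $\tilde h_k:=h_k/A_k$: the normalized amplitudes $(\tilde c_{1,k},\tilde c_{2,k}):=(c_{1,k},c_{2,k})/A_k$ stay in the unit simplex and $\tilde h_k\to 0$ uniformly. After translating $\vect m_{1,k}$ to the origin and extracting a subsequence (checking, as in the first case, that $\vect m_{2,k}$ must stay bounded to avoid forcing one $\tilde c_{i,k}\to 0$), the uniform limit $\tilde c_1^* g(\cdot;0,\mat S_1^*) = \tilde c_2^* g(\cdot;\vect m_2^*,\mat S_2^*)$ and Gaussian identifiability force the limit parameters to coincide: $\tilde c_1^*=\tilde c_2^*\ne 0$, $\vect m_1^*=\vect m_2^*$ and $\mat S_1^*=\mat S_2^*$. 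A first-order Taylor expansion around this common limit represents $h_k$, up to a higher-order remainder, as
\[
\alpha_k\, g(\cdot;\vect m^*,\mat S^*) + \vect \beta_k \cdot \nabla_{\vect m} g(\cdot;\vect m^*,\mat S^*) + \mat \gamma_k : \nabla_{\mat S} g(\cdot;\vect m^*,\mat S^*),
\]
where $\alpha_k=c_{1,k}-c_{2,k}$, $\vect\beta_k=c_{1,k}(\vect m_{2,k}-\vect m_{1,k})$ and $\mat\gamma_k=c_{1,k}(\mat S_{2,k}-\mat S_{1,k})$. These coefficients are bounded because $\|h_k\|_\infty=1$ and they parameterize a fixed finite-dimensional subspace of Schwartz functions on which $L^1$ and $L^\infty$ are equivalent. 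Passing to a further subsequence, $(\alpha_k,\vect\beta_k,\mat\gamma_k)\to(\alpha^*,\vect\beta^*,\mat\gamma^*)\ne 0$ and $h_k\to h^*_{\mathrm{lin}}$ both in $L^1$ and in $L^\infty$, so $\|h^*_{\mathrm{lin}}\|_1=\lim\|h_k\|_1=0$ while $h^*_{\mathrm{lin}}\ne 0$, the desired contradiction.

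The main obstacle is this second case. The key technical points are verifying that the three perturbations $c_{1,k}-c_{2,k}$, $c_{1,k}(\vect m_{2,k}-\vect m_{1,k})$ and $c_{1,k}(\mat S_{2,k}-\mat S_{1,k})$ remain bounded and produce a nonzero first-order limit (which relies on the linear independence of $g$ and its parameter derivatives in $(\vect m,\mat S)$), and controlling the quadratic Taylor remainder in $L^1$ uniformly, using the fact that the second parameter-derivatives of Gaussians have uniformly bounded $L^1$ norms over $\mathcal S^n_\tau$.
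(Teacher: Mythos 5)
The paper itself does not prove this lemma; it defers entirely to \cite[Lemma A.4]{carrillo2022ensemble}. So your compactness-and-contradiction argument is necessarily a different, self-contained route, and most of it is sound: the homogeneous normalization, the Fourier lower bound on $A_k=|c_{1,k}|+|c_{2,k}|$, the bounded-amplitude case via local convergence and Fatou, and the identification of the degenerate regime (amplitudes diverging while the two Gaussians coalesce) as the real difficulty are all correct.

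There is, however, a genuine gap in how you handle that degenerate regime. You propose a ``first-order Taylor expansion around the common limit'' $(\vect m^*,\mat S^*)$. Expanding each $g(\cdot;\theta_{i,k})$ about $\theta^*=(\vect m^*,\mat S^*)$ produces a remainder of size $O\bigl(|c_{i,k}|\,|\theta_{i,k}-\theta^*|^2\bigr)$ in both $L^1$ and $L^\infty$. You only know $\theta_{i,k}\to\theta^*$, not any rate, so with $|c_{i,k}|\to\infty$ this remainder need not be small --- e.g.\ if $|\theta_{i,k}-\theta^*|\sim A_k^{-1/4}$ the remainder diverges like $A_k^{1/2}$, and the ``higher-order'' term dominates the linear one. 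Uniform $L^1$ bounds on the second parameter-derivatives, which you invoke, do not rescue this. The related assertion that $(\alpha_k,\vect\beta_k,\mat\gamma_k)$ are bounded ``because $\|h_k\|_\infty=1$'' also cannot be made before the remainder is controlled, since $\|h_k\|_\infty$ only bounds the sum of the linear part and the remainder.

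The fix is to expand one Gaussian about the \emph{other's} parameters rather than about the limit: writing $g(\cdot;\theta_{1,k})=g(\cdot;\theta_{2,k})+\nabla_\theta g(\cdot;\theta_{2,k})\cdot(\theta_{1,k}-\theta_{2,k})+R_k$ with $\|R_k\|_{1},\|R_k\|_{\infty}\le C\,\delta_k^2$, $\delta_k:=|\theta_{1,k}-\theta_{2,k}|\to0$ (the segment stays in the convex set $\mathcal S^n_\tau$ times a compact set of means, so $C$ is uniform), one gets $h_k=(c_{1,k}-c_{2,k})\,g(\cdot;\theta_{2,k})+c_{1,k}\nabla_\theta g(\cdot;\theta_{2,k})\cdot(\theta_{1,k}-\theta_{2,k})+c_{1,k}R_k$, and now the remainder is $O(|c_{1,k}|\delta_k^2)=o(|c_{1,k}|\delta_k)$, i.e.\ genuinely negligible relative to the size $|\alpha_k|+|c_{1,k}|\delta_k$ of the linear part. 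Combined with the uniform equivalence of $L^1$ and $L^\infty$ on the finite-dimensional span of $g(\cdot;\theta)$ and its first parameter-derivatives (uniform over $\theta$ in a compact neighborhood of $\theta^*$, by the linear independence of $g$ times polynomials of degree $0,1,2$), this yields first the boundedness of the coefficients from $\|h_k\|_\infty=1$, and then $\|h_k\|_1\to0\Rightarrow\|h_k\|_\infty\to0$, the desired contradiction. With this re-centering your argument closes; as written, the key step fails.
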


\begin{proof}
    The lemma as stated may be found in \cite[Lemma A.4]{carrillo2022ensemble}, where a complete proof is given.
\end{proof}

\begin{lemma}
    \label{lemma:lipschitz_density_new}
    Let~$\op P$ and $\op Q$ denote the operators on probability measures given respectively in~\eqref{eq:Markov_kernel} and~\eqref{eq:definition_Q}.
    Let~\cref{assumpenum:assumption2,assumpenum:assumption3,assumpenum:assumption_lipschitz,assumpenum:assumption5} be satisfied and suppose that $\mu \in \mathcal P(\real^{\dimu})$
    satisfies, for some $q > 0$, the moment bound
    \begin{equation}
        \label{eq:moment_assumption}
        \mathcal{M}_q(\mu)= \int_{\real^{\dimu}} \abs{x}^q \, \mu(\d x) < \infty.
    \end{equation}
    Then there is~$L = L(\mathcal{M}_q(\mu), \kappa_{\Psi}, \kappa_h, \ell_h, \mat \Sigma, \mat \Gamma) > 0$ such that
    for all $(u_1, u_2, y) \in \real^{\dimu} \times \real^{\dimu} \times \real^{\dimy}$,
    the probability density of $p = \op Q \op P \mu$ satisfies
    \begin{align}
        \label{eq:lemma_new_main_statement}
        \bigl\lvert p(u_1, y) - p(u_2, y) \bigr\rvert
        &\leq L \abs{u_1 - u_2} \min \left\{ \max \left\{ \frac{1}{1 + \abs{u_1}^q}, \frac{1}{1 + \abs{u_2}^q} \right\} , \frac{1}{1 + \abs{y}^q} \right\}.
    \end{align}
\end{lemma}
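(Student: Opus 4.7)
The plan is to factor the density as $p(u,y) = G(u,y)\,F(u)$, where $G(u,y) := g(y-h(u); 0, \mat\Gamma)$ is the likelihood factor in~\eqref{eq:definition_Q} and $F(u) := \op P \mu(u) = \int g(u-\Psi(v); 0, \mat\Sigma)\,\mu(\d v)$ is the forecast density. From this factorization I will prove two separate Lipschitz-in-$u$ estimates on $p(\cdot,y)$---one with the $u$-decay factor $\max\{(1+|u_1|^q)^{-1},(1+|u_2|^q)^{-1}\}$, the other with the $y$-decay factor $(1+|y|^q)^{-1}$---and take their minimum.

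For the $u$-decay bound I use the telescoping identity
\[
p(u_1,y)-p(u_2,y) = G(u_i,y)\bigl(F(u_1)-F(u_2)\bigr) + F(u_{3-i})\bigl(G(u_1,y)-G(u_2,y)\bigr),
\]
selecting $i\in\{1,2\}$ so that $F(u_{3-i})$ is evaluated at the point of smaller norm. The factor $G$ is bounded by $(2\pi)^{-\dimy/2}(\det \mat\Gamma)^{-1/2}$ and, by the chain rule on the Gaussian density together with~\cref{assumpenum:assumption_lipschitz}, Lipschitz in $u$ with constant controlled by $\mat\Gamma$ and $\ell_h$. It therefore remains to prove the pointwise decay $F(u)\le C/(1+|u|^q)$ and the stronger Lipschitz-with-decay bound $|F(u_1)-F(u_2)|\le L|u_1-u_2|\max\{(1+|u_1|^q)^{-1},(1+|u_2|^q)^{-1}\}$. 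The pointwise decay is obtained by splitting the $v$-integral at the radius $R(u) := |u|/(4\kappa_{\Psi})$: on $\{|v|\le R(u)\}$, the linear-growth assumption~\cref{assumpenum:assumption2} forces $|u-\Psi(v)|\ge|u|/2$ and the Gaussian factor is exponentially small in $|u|$; on $\{|v|>R(u)\}$, Markov's inequality combined with~\eqref{eq:moment_assumption} gives mass $\lesssim \mathcal M_q(\mu)/|u|^q$; for bounded $|u|$, the estimate $\|g(\cdot;0,\mat\Sigma)\|_\infty<\infty$ suffices.

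The Lipschitz-with-decay bound for $F$ is the crux. Differentiating under the integral and applying the same splitting argument to $\nabla F(u) = -\int g(u-\Psi(v);0,\mat\Sigma)\mat\Sigma^{-1}(u-\Psi(v))\,\mu(\d v)$ yields $|\nabla F(u)|\le C/(1+|u|^q)$. A direct application of the mean value theorem on the segment $u(t)=tu_1+(1-t)u_2$ is however insufficient, since the segment may pass near the origin even when both endpoints have large norm, so the supremum of $|\nabla F|$ along the segment need not inherit the endpoint decay. I resolve this by a case split (WLOG $|u_2|\le|u_1|$): when $|u_1-u_2|\le|u_2|/4$ the segment stays in $\{|u|\ge 3|u_2|/4\}$, and the mean value theorem with the gradient decay closes the argument; otherwise $|u_1-u_2|>|u_2|/4$, and I use $|F(u_1)-F(u_2)|\le F(u_1)+F(u_2)\le 2C/(1+|u_2|^q)$, converting to the target estimate via $|u_1-u_2|>1/4$ if $|u_2|\ge 1$, and via the uniform Lipschitz bound $|F(u_1)-F(u_2)|\lesssim|u_1-u_2|$ together with $\max\{(1+|u_1|^q)^{-1},(1+|u_2|^q)^{-1}\}\ge 1/2$ if $|u_2|<1$.

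For the $y$-decay bound it suffices to prove $|\nabla_u p(u,y)|\le C/(1+|y|^q)$ uniformly in $u$ and integrate along the segment. Writing $\nabla_u p=(\nabla_u G)F+G\,\nabla_u F$, I case-split on the relative sizes of $|u|$ and $|y|$: when $|u|\le |y|/(2\kappa_h)$, \cref{assumpenum:assumption3} gives $|y-h(u)|\ge|y|/2-\kappa_h$, so both $G$ and $|\nabla_u G|$ are exponentially small in $|y|$ while $F$ and $|\nabla_u F|$ are uniformly bounded; when $|u|>|y|/(2\kappa_h)$, the decay bounds for $F$ and $\nabla_u F$ established above yield $F(u),|\nabla_u F(u)|\lesssim 1/(1+|y|^q)$ while $G$, $|\nabla_u G|$ are bounded. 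The bounded-$|y|$ regime is handled by the uniform Lipschitz bound. The main obstacle throughout the proof is the Lipschitz-with-decay step for $F$ outlined above; the remaining work is careful but routine Gaussian-tail bookkeeping, with explicit tracking of the dependence of the constant on $\mat\Sigma$, $\mat\Gamma$, $\kappa_\Psi$, $\kappa_h$, $\ell_h$, and $\mathcal M_q(\mu)$.
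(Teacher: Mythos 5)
Your proof is correct, and it shares the paper's overall architecture --- factor $p(u,y)=G(u,y)F(u)$ with $F=\op P\mu$, establish a Lipschitz bound with $u$-decay and one with $y$-decay, and take the minimum --- but the three key estimates are obtained by genuinely different mechanisms. For the decay $\op P\mu(u)\le C(1+|u|^q)^{-1}$ the paper bounds the pointwise ratio $\exp(-\tfrac12|u-\Psi(v)|^2_{\mat\Sigma})(1+|u|^q)/(1+|v|^q)$ uniformly in $(u,v)$ and integrates against $(1+|v|^q)\,\mu(\d v)$, whereas you split the $v$-integral at $|v|\sim|u|/\kappa_{\Psi}$ and invoke Markov's inequality; both are valid, and yours makes the role of the moment hypothesis more transparent. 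For the Lipschitz-with-decay estimate on $F$, the paper applies the two-point inequality \eqref{eq:elementary_inequality_gaussian_density} inside the $v$-integral, which delivers the max-of-endpoints decay directly and entirely sidesteps the ``segment passing near the origin'' obstruction that you correctly identify; your repair via the dichotomy $|u_1-u_2|\le |u_2|/4$ versus $|u_1-u_2|>|u_2|/4$ is sound but costs an extra case analysis that the paper's pointwise route avoids. For the $y$-decay, the paper uses the algebraic splitting \eqref{eq:decomposition_yq} and absorbs $|y-h(u)|^q$ into the Gaussian factor, while you case-split on $|u|$ versus $|y|/(2\kappa_h)$; the outcomes are equivalent. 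One small caveat: in your $y$-decay step you differentiate $G$ and $p$ in $u$, but \cref{assumpenum:assumption_lipschitz} only makes $h$ Lipschitz, so $\nabla_u G$ need not exist pointwise. This is cosmetic --- replace gradients by difference quotients, or note that $p(\cdot,y)$ is Lipschitz, hence absolutely continuous along segments, and invoke Rademacher --- and the paper avoids the issue by never differentiating in the $h$-variable.
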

\begin{proof}
    Throughout this proof, $C$ denotes a constant whose value is irrelevant in the context,
    depends only on~$\mathcal{M}_q(\mu), \kappa_{\Psi}, \kappa_h, \ell_h, \mat \Sigma, \mat \Gamma$,
    and may change from line to line.
    Sometimes we write the dependence explicitly to indicate which parameters are involved.

    \paragraph{Step 1. Bounding the density of $\op P \mu$}
    We first rewrite
    \[
        \bigl(1 + |u|^q\bigr) \op P \mu(u)
        = C(\Sigma) \int_{\real^{\dimu}} \exp \left( - \frac{1}{2} \lvert u - \Psi(v) \rvert_{\Sigma}^2 \right) \frac{1 + \abs{u}^q}{1 + \abs{v}^q} \, \bigl(1 + \abs{v}^q\bigr) \mu(\d v).
    \]
    We note that
    \[
        S(\Sigma, \kappa_{\Psi}) :=
        \sup_{(u, v) \in \real^{\dimu} \times \real^{\dimu}}
        \exp \left( - \frac{1}{2} \vecnorm*{u - \Psi(v)}_{\mat \Sigma}^2 \right) \, \frac{1 + |u|^q}{1 + |v|^q}
        < \infty;
    \]
    this may be seen observing that
    \begin{equation}
    \label{eq:intermediate_step}
        \exp \left( - \frac{1}{2} \vecnorm*{u - \Psi(v)}_{\mat \Sigma}^2 \right) \, \frac{1 + |u|^q}{1 + |v|^q} \leq \exp \left( - \frac{1}{2} \vecnorm*{u - \Psi(v)}_{\mat \Sigma}^2 \right) \, \frac{(1 + 2^{q-1}|u - \Psi(v)|^q) + 2^{q-1}|\Psi(v)|^q}{1 + |v|^q}.
    \end{equation}
    The first term on the righthand side of \eqref{eq:intermediate_step} may be bounded by noting that $1+2^{q-1}e^{-x^2}x^q$ is uniformly bounded in $x\in \real$; on the other hand the second term may be bounded by applying~\cref{assumpenum:assumption2}. Now, using~\eqref{eq:moment_assumption},
    we obtain that
    \begin{equation}
        \label{eq:bound_markov_kernel_density}
        \op P \mu(u) \leq \frac{C\bigl(\Sigma, \kappa_{\Psi}, \mathcal{M}_q(\mu)\bigr)}{1 + |u|^q}.
    \end{equation}

    \paragraph{Step 2. Establishing Lipschitz continuity of $u \mapsto \op P \mu(u)$}
    Since $g(x) := \e^{-x^2}$ has derivative~$2 x \e^{-x^2}$ and $\lvert x \e^{-x^2} \rvert \leq \e^{-\frac{x^2}{2}}$ for all $x \in \real$,
    it holds for some $\xi$ between $|a|$ and $|b|$ that
    \begin{align}
        \label{eq:elementary_inequality_gaussian_density}
        \forall  (a, b) \in \real \times \real, \qquad
        \left\lvert \e^{-a^2} - \e^{-b^2} \right\rvert
        =  \abs{b - a} \, \lvert g'(\xi) \rvert
        \leq 2 \abs{b - a} \left( \e^{-\frac{a^2}{2}} + \e^{-\frac{b^2}{2}} \right).
    \end{align}
    Using this inequality with $a^2 = \frac{1}{2} \vecnorm*{u_1 - \vect \Psi(v)}_{\mat \Sigma}^2$ and $b^2 = \frac{1}{2} \vecnorm*{u_2 - \vect \Psi(v)}_{\mat \Sigma}^2$,
    the triangle inequality,
    and equivalence of norms,
    we deduce that, for all $(u_1, u_2, v) \in \real^{\dimu} \times \real^{\dimu} \times \real^{\dimu}$,
    \begin{align*}
        &\left\lvert \exp \left( - \frac{1}{2} \vecnorm*{u_1 - \vect \Psi(v)}_{\mat \Sigma}^2 \right) -\exp \left( - \frac{1}{2} \vecnorm*{u_2 - \vect \Psi(v)}_{\mat \Sigma}^2 \right) \right\rvert \\
        \label{eq:initial_inequality}
        & \hspace{2cm} \leq C \vecnorm{u_2 - u_1} \left( \exp \left( - \frac{1}{4} \vecnorm*{u_1 - \vect \Psi(v)}_{\mat \Sigma}^2 \right) + \exp \left( - \frac{1}{4} \vecnorm*{u_2 - \vect \Psi(v)}_{\mat \Sigma}^2 \right) \right)
    \end{align*}
    for constant $C=C(\Sigma).$
    Integrating out the $v$ variable with respect to $\mu$
    we obtain that
    \begin{align*}
        \lvert \op P \mu(u_1) - \op P \mu(u_2) \rvert
        &\leq
           C \abs{u_1 - u_2} \int_{\real^{\dimu}} \exp \left( - \frac{1}{4} \vecnorm*{u_1 - \vect \Psi(v)}_{\mat \Sigma}^2 \right) \, \mu(\d v) \\
        & \quad + C \abs{u_1 - u_2} \int_{\real^{\dimu}} \exp \left( - \frac{1}{4} \vecnorm*{u_2 - \vect \Psi(v)}_{\mat \Sigma}^2 \right) \, \mu(\d v).
    \end{align*}
    The integrals on the right-hand side can be bounded as in the first step,
    which leads to the inequality
    \begin{equation}
        \label{eq:technical_lipschitz_main1}
        \forall (u_1, u_2) \in \real^{\dimu} \times \real^{\dimu}, \qquad
        \lvert \op P \mu(u_1) - \op P \mu(u_2) \rvert
        \leq C \lvert u_1 - u_2 \rvert \max \left\{ \frac{1}{1 + \abs{u_1}^q}, \frac{1}{1 + \abs{u_2}^q} \right\}.
    \end{equation}

    \paragraph{Step 3. Obtaining a coarse estimate}
    In view of the elementary inequality~\eqref{eq:elementary_inequality_gaussian_density} and
    the assumed Lipschitz continuity of~$h$,
    it holds that, for constant $C=C(\Gamma,\ell_h)$
    \begin{align}
        \notag
        &\Bigl\lvert \normal\bigl(\vect h(u_1), \mat \Gamma\bigr)(y) - \normal\bigl(\vect h(u_2), \mat \Gamma\bigr)(y) \Bigr\rvert \\
        \label{eq:technical_lipschitz_main2}
        \qquad & \leq C \vecnorm{u_1 - u_2}  \exp \left( - \frac{1}{4}  \vecnorm{y - h(u_1)}_{\mat \Gamma}^2 \right)
        + C \vecnorm{u_1 - u_2} \exp \left( - \frac{1}{4}  \vecnorm{y - h(u_2)}_{\mat \Gamma}^2 \right).
    \end{align}
    Using the decomposition
    \begin{align*}
        p(u_1, y) - p(u_2, y)
        &= \op P \mu(u_1) \, \normal\bigl(\vect h(u_1), \mat \Gamma\bigr) (y) - \op P \mu(u_2) \, \normal\bigl(\vect h(u_2), \mat \Gamma\bigr) (y) \\
        &= \bigl(\op P \mu(u_1) - \op P \mu(u_2)\bigr) \, \normal\bigl(\vect h(u_1), \mat \Gamma\bigr) (y) \\
        &\quad + \op P \mu(u_2) \Bigl( \normal\bigl(\vect h(u_1), \mat \Gamma\bigr) (y) - \normal\bigl(\vect h(u_2), \mat \Gamma\bigr) (y) \Bigr).
    \end{align*}
    and employing~\eqref{eq:bound_markov_kernel_density},~\eqref{eq:technical_lipschitz_main1} and~\eqref{eq:technical_lipschitz_main2},
    we deduce that
    \begin{align}
        \notag
        \lvert p(u_1, y) - p(u_2, y) \rvert
        &\leq C \abs{u_1 - u_2} \max \left\{ \frac{1}{1 + \abs{u_1}^q}, \frac{1}{1 + \abs{u_2}^q} \right\} \\
        \label{eq:lipschitz_density_new}
        &\qquad \times \max \left\{ \exp \left( - \frac{1}{4}  \vecnorm{y - h(u_1)}_{\mat \Gamma}^2 \right), \exp \left( - \frac{1}{4}  \vecnorm{y - h(u_2)}_{\mat \Gamma}^2 \right) \right\}.
    \end{align}
    Note that, for the function $h(u) = u$, the quantity multiplying $\abs{u_1 - u_2}$ on the right-hand does not tend to 0 along the sequence $\left(u_1^{(n)}, u_2^{(n)}, y^{(n)}\right) = (0, n, n)$.
    For our purposes in this work,
    we need the finer estimate~\eqref{eq:lemma_new_main_statement};
    establishing this bound is the aim of the next two steps.

    \paragraph{Step 4. Bounding the density $p(u, y)$}
    Recall that $p(u, y) = \op P \mu(u) \mathcal N\bigl(h(u), \mat \Gamma\bigr)(y)$.
    We prove in this step the inequality
    \begin{equation}
        \label{eq:density_uy}
        p(u, y) \leq C \min \left\{ \frac{1}{1 + \abs{u}^q}, \frac{1}{1 + \abs{y}^q}\right\},
    \end{equation}
    or equivalently
    \[
        \sup_{(u,y) \in \real^{\dimu} \times \real^{\dimy}}  p(u, y) \, \max \left\{ 1 + \abs{u}^q, 1 +\abs{y}^q \right\} < \infty.
    \]
    To this end, note that
    \begin{align*}
         p(u, y) \max \left\{ \abs{u}^q,  \abs{y}^q\right\}
         &= \op P \mu(u) \, \mathcal N\bigl(h(u), \mat \Gamma\bigr)(y) \max \left\{ \abs{u}^q,  \abs{y}^q\right\} \\
         &\leq \op P \mu(u) \, \mathcal N\bigl(h(u), \mat \Gamma\bigr)(y) \abs{u}^q
         + \op P \mu(u) \, \mathcal N\bigl(h(u), \mat \Gamma\bigr)(y) \abs{y}^q.
    \end{align*}
    The first term is bounded uniformly by {\bf Step 1}, estimate \eqref{eq:bound_markov_kernel_density}.
    For the second term,
    we use that
    \begin{equation}
        \label{eq:decomposition_yq}
        \lvert y \rvert^q \leq 2^{q-1} \bigl\lvert h(u) \bigr\rvert^{q} + 2^{q-1} \bigl\lvert y - h(u) \bigr\rvert^q,
    \end{equation}
    to obtain
    \[
        \op P \mu(u) \, \mathcal N\bigl(h(u), \mat \Gamma\bigr)(y) \abs{y}^q
        \leq C \op P \mu(u) \, \mathcal N\bigl(h(u), \mat \Gamma\bigr)(y) \Bigl( \abs{h(u)}^q + \abs{y - h(u)}^q \Bigr).
    \]
    The first term on the right-hand side is bounded uniformly,
    again by {\bf Step 1}, estimate \eqref{eq:bound_markov_kernel_density}, and using~\cref{assumpenum:assumption3}.
    The second term is also bounded uniformly because the function $x \mapsto \mathcal N(0, \mat \Gamma)(x) \abs{x}^q$ is uniformly bounded in $x$,
    by a value depending only on $\Gamma$ and $q$.

    \paragraph{Step 5. Obtaining the estimate~\eqref{eq:lemma_new_main_statement}}
    The claimed inequality is equivalent to
    \[
        \sup_{u_1, u_2, y}  \frac{ \bigl\lvert p(u_1, y) - p(u_2, y) \bigr\rvert }{\abs{u_1 - u_2}}
        \max \Bigl\{ \min \left\{ 1 + \abs{u_1}^q, 1 + \abs{u_2}^q \right\} , 1 + \abs{y}^q \Bigr\}
        < \infty,
    \]
    where the supremum is over $\real^{\dimu} \times \real^{\dimu} \times \real^{\dimy}$.
    By~\eqref{eq:lipschitz_density_new}, it holds that
    \[
        \sup_{u_1, u_2, y}  \frac{ \bigl\lvert p(u_1, y) - p(u_2, y) \bigr\rvert }{\abs{u_1 - u_2}}
        \min \left\{ 1 + \abs{u_1}^q, 1 + \abs{u_2}^q \right\} < \infty,
    \]
    so it remains to show that
    \[
        \sup_{u_1, u_2, y}  \frac{ \bigl\lvert p(u_1, y) - p(u_2, y) \bigr\rvert }{\abs{u_1 - u_2}} \abs{y}^q < \infty.
    \]
    By~\eqref{eq:density_uy},
    it is clear that the supremum is uniformly bounded if restricted to the set $\abs{u_1 - u_2} \geq 1$,
    so it suffices to show that
    \[
        \sup_{(u, \delta, y) \in \real^{\dimu} \times B(0, 1) \times \real^{\dimy}} \frac{ \bigl\lvert p(u, y) - p(u+\delta, y) \bigr\rvert }{\abs{\delta}}
        \abs{y}^q < \infty,
    \]
    where $B(0, 1)$ is the open ball of radius 1 centered radius at the origin in $\real^{\dimu}$.
    We use again~\eqref{eq:decomposition_yq} in order to bound
    \begin{align*}
        \frac{ \bigl\lvert p(u, y) - p(u+\delta, y) \bigr\rvert }{\abs{\delta}} \abs{y}^q
        &\leq C\frac{ \bigl\lvert p(u, y) - p(u+\delta, y) \bigr\rvert }{\abs{\delta}} \bigl\lvert h(u) \bigr\rvert^q \\
        &\qquad + C\frac{ \bigl\lvert p(u, y) - p(u+\delta, y) \bigr\rvert}{\abs{\delta}} \abs{y - h(u)}^q.
    \end{align*}
    The first term is bounded uniformly by~\eqref{eq:lipschitz_density_new} and the assumption that~$\bigl\lvert h(u) \bigr\rvert \leq \kappa_h \bigl(1 + \abs{u}\bigr)$.
    To conclude the proof,
    it remains to show that
    \begin{equation}
        \label{eq:final_equation_new_lemma}
        \sup_{(u, \delta, y) \in \real^{\dimu} \times B(0, 1) \times \real^{\dimy}}
        C\frac{ \bigl\lvert p(u, y) - p(u+\delta, y) \bigr\rvert }{\abs{\delta}} \abs{y - h(u)}^q < \infty.
    \end{equation}
    By~\eqref{eq:lipschitz_density_new} 
    it holds that
    \begin{align*}
        &\bigl\lvert p(u, y) - p(u+\delta, y) \bigr\rvert  \\
        &\qquad \leq  \frac{C \abs{\delta}}{1 + \abs{u}^q}
        \max \left\{ \exp \left( - \frac{1}{4}  \vecnorm{y - h(u)}_{\mat \Gamma}^2 \right), \exp \left( - \frac{1}{4}  \vecnorm{y - h(u + \delta)}_{\mat \Gamma}^2 \right) \right\}. \\
        &\qquad \leq  \frac{C \abs{\delta}}{1 + \abs{u}^q} \exp \left( - \frac{1}{8}  \vecnorm{y - h(u)}_{\mat \Gamma}^2
        + \frac{1}{4} \abs[\Big]{h(u) - h(u + \delta)}_{\mat \Gamma}^2 \right).
    \end{align*}
    In the last line we used the inequality $|a + b|^2 \geq \frac{1}{2}|a|^2 - |b|^2$,
    which follows from Young's inequality.
    Since the function $x \mapsto \e^{-\frac{x^2}{8} + C} x^q$ is bounded uniformly in $x \in \real$ and by~\cref{assumpenum:assumption_lipschitz},
    the bound~\eqref{eq:final_equation_new_lemma} easily follows,
    concluding the proof.
\end{proof}

\section{Technical Results for Theorem \ref{theorem:main_theorem}}
\label{appendix:A}

We establish moment bounds in~\cref{sub:moment_bounds}, we recall that on Gaussian measures the action of the conditioning map and the Kalman transport map are equivalent in~\cref{sub:mean_field_is_conditioning_for_gaussians},
and we prove stability results in~\cref{sub:stability_results}.
\subsection{Moment Bounds}
\label{sub:moment_bounds}
\begin{lemma}
    [Moment Bounds]
    \label{lemma:bound_on_Pmu_new}
    Let~$\mu$ be a probability measure on~$\real^{\dimu}$ with bounded first and second order polynomial moments $\mathcal{M}_1(\mu),\mathcal{M}_2(\mu)<\infty$. Under~\cref{assumpenum:assumption2,assumpenum:assumption5},
    it holds that
    \begin{equation}
        \label{eq:bound_on_Pmu_new}
        \bigl\lvert \mathcal M(\op P \mu) \bigr\rvert \leq \kappa_{\vect \Psi} \bigl(1+\mathcal{M}_1(\mu)\bigr), \qquad
        \mat \Sigma \preccurlyeq \mathcal C(\op P \mu) \preccurlyeq \mat \Sigma +2\kappa_{\vect \Psi}^2\bigl(1+\mathcal{M}_2(\mu) \bigr) \mat I_{\dimu}.
    \end{equation}
\end{lemma}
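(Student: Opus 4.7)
The plan is to use the sample-path representation of $\op P \mu$: if $V \sim \mu$ and $\xi \sim \normal(0, \mat \Sigma)$ are independent, then $\op P \mu$ is the law of $\vect \Psi(V) + \xi$. With this in hand, both the mean and covariance bounds reduce to elementary calculations using independence, together with the linear growth assumption~\cref{assumpenum:assumption2} on $\vect \Psi$.

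First I would compute the mean: by linearity of expectation and $\expect[\xi] = 0$, we have $\mathcal M(\op P \mu) = \expect[\vect \Psi(V)]$, so Jensen's inequality gives
\[
\bigl|\mathcal M(\op P \mu)\bigr| \leq \expect\bigl[|\vect \Psi(V)|\bigr] \leq \kappa_{\vect \Psi}\expect\bigl[1 + |V|\bigr] = \kappa_{\vect \Psi}\bigl(1+\mathcal M_1(\mu)\bigr),
\]
which is the first bound. Here the use of $\mathcal M_1(\mu) < \infty$ is what ensures the integrand is finite.

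Next I would turn to the covariance. Since $V \ind \xi$, the covariance splits as $\mathcal C(\op P \mu) = \operatorname{Cov}\bigl(\vect \Psi(V)\bigr) + \mat \Sigma$. The lower bound $\mathcal C(\op P \mu) \succcurlyeq \mat \Sigma$ is then immediate from positive semi-definiteness of $\operatorname{Cov}\bigl(\vect \Psi(V)\bigr)$. For the upper bound I would apply~\cref{lemma:auxiliary_ineq_second_moment} to replace $\operatorname{Cov}\bigl(\vect \Psi(V)\bigr)$ with the larger quantity $\expect\bigl[\vect \Psi(V)\vect \Psi(V)^\t\bigr]$, and then use the pointwise matrix inequality $xx^\t \preccurlyeq |x|^2 \mat I_{\dimu}$ together with~\cref{assumpenum:assumption2} and $(1+|V|)^2 \leq 2(1+|V|^2)$ to obtain
\[
\operatorname{Cov}\bigl(\vect \Psi(V)\bigr) \preccurlyeq \expect\bigl[|\vect \Psi(V)|^2\bigr] \mat I_{\dimu} \leq \kappa_{\vect \Psi}^2 \expect\bigl[(1+|V|)^2\bigr] \mat I_{\dimu} \leq 2\kappa_{\vect \Psi}^2 \bigl(1 + \mathcal M_2(\mu)\bigr) \mat I_{\dimu}.
\]
Adding $\mat \Sigma$ gives the claimed upper bound.

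There is essentially no hard step here; the only minor subtlety is to remember to pass through the matrix second moment via~\cref{lemma:auxiliary_ineq_second_moment} before scalarising with $xx^\t \preccurlyeq |x|^2 \mat I_{\dimu}$, since bounding $\operatorname{Cov}\bigl(\vect \Psi(V)\bigr)$ directly by $|\vect \Psi(V)|^2 \mat I_{\dimu}$ is not valid pointwise. Assumption~\cref{assumpenum:assumption5} is not actually invoked in the argument for either bound, but is consistent with the stated hypotheses.
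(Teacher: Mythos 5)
Your proof is correct and follows essentially the same route as the paper: the sample-path representation $\vect\Psi(V)+\xi$ is just the probabilistic phrasing of the paper's Fubini computation with the Gaussian kernel, and both arguments then invoke \cref{lemma:auxiliary_ineq_second_moment}, the pointwise inequality $ww^\t \preccurlyeq |w|^2 \mat I_{\dimu}$, and \cref{assumpenum:assumption2} to reach identical bounds. Your direct justification of the lower bound via $\operatorname{Cov}\bigl(\vect\Psi(V)\bigr) \succcurlyeq 0$ is a clean substitute for the paper's citation of prior work, and your observation that \cref{assumpenum:assumption5} is not really needed for these bounds is accurate.
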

\begin{proof}
The proof of this lemma follows the steps of the proof of \cite[Lemma B.1]{carrillo2022ensemble}; however, here different bounds reflecting the assumptions on $\vect \Psi$ and $\vect h$ in this paper are required. Using the definition of~$\op P$ in~\eqref{eq:Markov_kernel},
it holds that
\begin{align*}
    \mathcal M(\op P \mu)
    = \int_{\real^{\dimu}} u \, \op P \mu(u) \, \d u
    &= \frac{1}{\sqrt{(2\pi)^{\dimu} \det \mat \Sigma}}
    \int_{\real^{\dimu}} \int_{\real^{\dimu}} u \exp \left( - \frac{1}{2} \vecnorm[\big]{u - \vect \Psi(v)}_{\mat \Sigma}^2 \right) \, \mu(\d v) \, \d u \\
    &= \int_{\real^{\dimu}} \vect \Psi(v) \, \mu(\d v),
\end{align*}
where application of Fubini's theorem yields the last equality. The first inequality in~\eqref{eq:bound_on_Pmu_new} then follows from~\cref{assumpenum:assumption2}.
Obtaining the lower bound of the second inequality in~\eqref{eq:bound_on_Pmu_new} may be done identically to the proof of \cite[Lemma B.1]{carrillo2022ensemble}.
We turn our attention to obtaining the upper bound. Using~\cref{lemma:auxiliary_ineq_second_moment} and noting that $\vect w \vect w^\t \preccurlyeq (\vect w^\t \vect w) \mat I_{\dimu}$ for any vector $\vect w \in \real^{\dimu}$ by the Cauchy-Schwarz inequality,
we deduce that
\begin{align}
    \nonumber
    \mathcal C(\op P \mu)
    &\preccurlyeq \int_{\real^{\dimu}} u \otimes u \,  \op P \mu(u) \, \d u  \\
    \nonumber
    &= \frac{1}{\sqrt{(2\pi)^{\dimu} \det \mat \Sigma}} \int_{\real^{\dimu}} \int_{\real^{\dimu}} u \otimes u \exp \left( - \frac{1}{2} \vecnorm*{u - \vect \Psi(v)}_{\mat \Sigma}^2 \right) \mu(\d v) \, \d u \\
    \nonumber
    &= \int_{\real^{\dimu}} \bigl( \vect \Psi(v) \otimes \vect \Psi(v) + \mat \Sigma \bigr) \, \mu(\d v)\\
    \label{eq:covariance_Pmu_new}
    & \preccurlyeq \mat \Sigma + \left(\int_{\real^{\dimu}}|\vect \Psi(v)|^2\mu(\d v) \right)\mat I_{\dimu}
    \preccurlyeq \mat \Sigma + 2\kappa_{\vect \Psi}^2\bigl(1+\mathcal{M}_2(\mu) \bigr) \mat I_{\dimu} ,
\end{align}
which yields the desired result.
\end{proof}

\begin{lemma}
    \label{lemma:bound_on_QPmu_new}
    Let~$\mu$ be a probability measure on~$\real^{\dimu}$ with bounded first and second order polynomial moments $\mathcal{M}_1(\mu),\mathcal{M}_2(\mu)<\infty$. Under~\cref{assumpenum:assumption2,assumpenum:assumption3,assumpenum:assumption5},
    it holds that
    \begin{subequations}
            \begin{align}
                \label{eq:bound_on_QPmu_mean_new_1}
                \bigl\lvert \mathcal M^u(\op Q \op P \mu) \bigr\rvert
        &\leq \kappa_{\vect \Psi} \bigl(1+\mathcal{M}_1(\mu)\bigr), \\
        \label{eq:bound_on_QPmu_mean_new_2}
        \bigl\lvert \mathcal M^y(\op Q \op P \mu) \bigr\rvert
        &\leq \kappa_{\vect h} \sqrt{2 \Bigl( 1+\trace(\mat \Sigma)+2\kappa_{\vect \Psi}^2\bigl(1+\mathcal{M}_2(\mu)\bigr) \Bigr)}.
            \end{align}
    \end{subequations}
    Furthermore, it holds that
    \begin{subequations}
        \begin{align}
            \label{eq:bound_on_QPmu_covariance_new_upper}
            \mathcal C(\op Q \op P \mu)&\preccurlyeq
            \begin{pmatrix}
                4\kappa_{\vect \Psi}^2\bigl(1+\mathcal{M}_2(\mu) \bigr)  \mat I_{\dimu}  + 2 \mat \Sigma & \mat 0_{\dimu \times \dimy} \\
                \mat 0_{\dimy \times \dimu} & 4\kappa_{\vect h}^2\bigl(1+\trace(\mat \Sigma)+2\kappa_{\vect \Psi}^2\bigl(1+\mathcal{M}_2(\mu)\bigr) \bigr) \mat I_{\dimy}  + \mat \Gamma
            \end{pmatrix},\\
            \label{eq:bound_on_QPmu_covariance_new_lower}
            \mathcal C(\op Q \op P \mu)&
            \succcurlyeq
            \frac{\gamma\cdot \min \Bigl\{2\sigma , \gamma + 4\kappa_{\vect h}^2\Bigl(1+\trace(\mat \Sigma)+2\kappa_{\vect \Psi}^2\bigl(1+\mathcal{M}_2(\mu)\bigr) \Bigr)\Bigr\}}{2\gamma + 8\kappa_{\vect h}^2\Bigl(1+\trace(\mat \Sigma)+2\kappa_{\vect \Psi}^2\bigl(1+\mathcal{M}_2(\mu)\bigr) \Bigr) }
            \mat I_{\dimu + \dimy}.
        \end{align}
    \end{subequations}
\end{lemma}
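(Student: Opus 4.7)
The plan is to exploit the representation of $\op Q \op P \mu$ as the joint law of the random vector $Z := (U, \vect h(U) + \eta)$, where $U \sim \op P \mu$ and $\eta \sim \normal(0, \mat \Gamma)$ is independent of $U$, so that most computations reduce to moment bounds for $\op P\mu$ already supplied by~\cref{lemma:bound_on_Pmu_new}. The two mean bounds are the easy case: the identity $\mathcal M^u(\op Q \op P \mu) = \mathcal M(\op P \mu)$ delivers~\eqref{eq:bound_on_QPmu_mean_new_1} directly from the first inequality in~\eqref{eq:bound_on_Pmu_new}, while $\mathcal M^y(\op Q \op P \mu) = \expect[\vect h(U)]$ together with Jensen's inequality and the growth bound $|\vect h(u)|^2 \le 2\kappa_{\vect h}^2(1+|u|^2)$ from~\cref{assumpenum:assumption3} reduces~\eqref{eq:bound_on_QPmu_mean_new_2} to the intermediate estimate $\mathcal{M}_2(\op P \mu) \le \trace(\mat\Sigma) + 2\kappa_{\vect \Psi}^2(1+\mathcal{M}_2(\mu))$, which I would obtain by direct integration against the Gaussian kernel in~\eqref{eq:Markov_kernel} and~\cref{assumpenum:assumption2}.

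For the upper covariance bound~\eqref{eq:bound_on_QPmu_covariance_new_upper}, I would first peel off the observation noise using the independence $\eta \ind U$:
\[
\mathcal C(Z) = \mathcal C\bigl((U, \vect h(U))\bigr) + \begin{pmatrix} \mat 0 & \mat 0 \\ \mat 0 & \mat \Gamma \end{pmatrix},
\]
and then invoke the block-diagonal doubling inequality $\mathcal C((U, \vect h(U))) \preccurlyeq 2\,\diag\bigl(\mathcal C(U), \mathcal C(\vect h(U))\bigr)$. This inequality is elementary: the Gram matrices built from the centered vectors $(U - \expect U, \pm(\vect h(U) - \expect \vect h(U)))$ are both positive semidefinite and sum to the doubled block-diagonal matrix. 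The $(1,1)$ block is then controlled by the upper bound in~\eqref{eq:bound_on_Pmu_new}, while the $(2,2)$ block is controlled by $\mathcal C(\vect h(U)) \preccurlyeq \expect[|\vect h(U)|^2]\, \mat I_{\dimy}$ using the same estimate on $\expect[|\vect h(U)|^2]$ derived for the $y$-mean.

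The lower covariance bound~\eqref{eq:bound_on_QPmu_covariance_new_lower} is the main technical step. Writing $\mathcal C(Z) = \begin{pmatrix} A & B \\ B^\t & D \end{pmatrix}$ with $A = \mathcal C(\op P\mu)$, $B = \mathcal C(U, \vect h(U))$, and $D = \mathcal C(\vect h(U)) + \mat\Gamma$, I would first observe that the Schur complement of $A$ in $\mathcal C(Z)$ satisfies
\[
S := D - B^\t A^{-1} B = \Bigl(\mathcal C(\vect h(U)) - \mathcal C(\vect h(U), U)\,\mathcal C(U)^{-1}\,\mathcal C(U, \vect h(U))\Bigr) + \mat\Gamma \succcurlyeq \mat\Gamma \succcurlyeq \gamma\mat I_{\dimy},
\]
since the parenthesized term is itself the psd Schur complement of $\mathcal C(U)$ in the covariance of $(U, \vect h(U))$. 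Completing the square in the $v_1$ direction gives
\[
v^\t \mathcal C(Z) v = (v_1 + A^{-1} B v_2)^\t A (v_1 + A^{-1} B v_2) + v_2^\t S v_2 \ge \sigma\,\lvert v_1 + A^{-1} B v_2 \rvert^2 + \gamma\,\lvert v_2 \rvert^2,
\]
which, combined with the Cauchy-Schwarz estimate $v^\t \mathcal C(Z) v \ge \bigl(\sqrt{v_1^\t A v_1} - \sqrt{v_2^\t \mathcal C(\vect h(U)) v_2}\bigr)^2 + \gamma \lvert v_2 \rvert^2$, the trivial lower bound $v^\t \mathcal C(Z) v \ge \gamma \lvert v_2 \rvert^2$, and the spectral bound $\|\mathcal C(\vect h(U))\| \le 2\kappa_{\vect h}^2 K$ with $K := 1 + \trace(\mat\Sigma) + 2\kappa_{\vect \Psi}^2(1+\mathcal{M}_2(\mu))$, yields competing bounds that are linear in $(\lvert v_1 \rvert^2, \lvert v_2 \rvert^2)$. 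Taking an optimized convex combination, and distinguishing the two regimes $2\sigma \le \gamma + 4\kappa_{\vect h}^2 K$ versus $2\sigma > \gamma + 4\kappa_{\vect h}^2 K$, produces the stated ratio in~\eqref{eq:bound_on_QPmu_covariance_new_lower}.

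The principal obstacle is this final lower bound: the Schur complement cleanly delivers a $\gamma \mat I_{\dimy}$ floor on one block in a rotated basis, but transferring this to a uniform spectral lower bound on the full joint covariance requires careful bookkeeping to balance the $\sigma$-contribution from $\mathcal C(\op P\mu)$ against the potentially large norm $\|\mathcal C(\vect h(U))\|$ that controls the cross block $B$, and the precise coefficient appearing in~\eqref{eq:bound_on_QPmu_covariance_new_lower} is exactly what emerges from optimizing this balance.
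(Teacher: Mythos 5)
Your proposal is correct and follows essentially the same route as the paper: the mean bounds via Jensen and the growth assumptions, the upper covariance bound via peeling off $\mat\Gamma$ and the block-diagonal doubling inequality, and the lower bound via Cauchy--Schwarz on the cross-covariance followed by a Young-type balancing whose optimization yields exactly the stated constant. The Schur-complement observation is a harmless (and, as you note, non-decisive) addition that the paper does not use; the step that actually produces the coefficient is the same in both arguments.
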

\begin{proof}
    The inequalities~\eqref{eq:bound_on_QPmu_mean_new_1} and~\eqref{eq:bound_on_QPmu_mean_new_2} follow from the assumptions
and the fact that
\begin{align}
\label{eq:mean_QPmu}
    \mathcal M(\op Q \op P \mu) =
    \begin{pmatrix}
        \mathcal M(\op P \mu) \\
        \op P \mu [\vect h]
    \end{pmatrix}.
\end{align}
Indeed, from~\cref{lemma:bound_on_Pmu_new} we know that
\(
    \bigl\lvert \mathcal M(\op P \mu) \bigr\rvert
    \leq \kappa_{\vect \Psi} \bigl(1+\mathcal{M}_1(\mu)\bigr),
\)
which leads by Jensen's inequality to~\eqref{eq:bound_on_QPmu_mean_new_1}.
To deduce~\eqref{eq:bound_on_QPmu_mean_new_2},
we note that
\begin{subequations}
    \begin{align}
        \nonumber
        \bigl|P \mu [\vect h]\bigr|^2
        &\leq \int_{\real^{\dimu}}|\vect h(u)|^2 \, \op P \mu (\d u)\\
        \label{eq:Pmuh_intemediate}
        &\leq 2\kappa_{\vect h}^2 + 2\kappa_{\vect h}^2\int_{\real^{\dimu}}|u|^2 \, \op P \mu (\d u)\\
        \label{eq:Pmuh_final}
        &= 2\kappa_{\vect h}^2+ 2\kappa_{\vect h}^2\int_{\real^{\dimu}}\Bigl(|\vect \Psi(v)|^2 +\trace (\mat \Sigma) \Bigr) \, \mu(\d v),
    \end{align}
\end{subequations}
where~\eqref{eq:Pmuh_intemediate} follows by applying~\cref{assumpenum:assumption3} and Young's inequality and~\eqref{eq:Pmuh_final} follows by applying the properties of the Gaussian transition density.
The result then follows by applying~\cref{assumpenum:assumption2}.

To obtain the covariance bounds, we proceed using analogous steps to the proof of \cite[Lemma B.2]{carrillo2022ensemble}; however, here different bounds reflecting the assumptions on $\vect \Psi$ and $\vect h$ in this paper are required. We begin by establishing inequality~\eqref{eq:bound_on_QPmu_covariance_new_upper}.
To this end,
letting $\phi\colon \real^{\dimu} \to \real^{\dimu + \dimy}$ define the map $\phi(u) = \bigl(u, \vect h(u)\bigr)$, it holds that
\begin{equation}
    \label{eq:decomposition_covariance_new}
    \mathcal C(\op Q \op P \mu) =
    \mathcal C(\phi_{\sharp} \op P \mu) +
    \begin{pmatrix}
        \mat 0_{\dimu \times \dimu} & \mat 0_{\dimu \times \dimy} \\
        \mat 0_{\dimy \times \dimu} & \mat \Gamma
    \end{pmatrix}
    =
    \begin{pmatrix}
        \mathcal C^{uu}(\phi_{\sharp} \op P \mu) & \mathcal C^{uy}(\phi_{\sharp} \op P \mu) \\
        \mathcal C^{yu}(\phi_{\sharp} \op P \mu) & \mathcal C^{yy}(\phi_{\sharp} \op P \mu) + \mat \Gamma
    \end{pmatrix}.
\end{equation}
As shown in the proof of \cite[Lemma B.2]{carrillo2022ensemble} we have, for all $(\vect a, \vect b) \in \real^{\dimu} \times \real^{\dimy}$,
\[
    \begin{pmatrix}
        \vect a \\
        \vect b
    \end{pmatrix}
    \otimes
    \begin{pmatrix}
        \vect a \\
        \vect b
    \end{pmatrix}
    \preccurlyeq
    2
    \begin{pmatrix}
        \vect a \vect a^\t & \mat 0_{\dimu \times \dimy} \\
        \mat 0_{\dimy \times \dimu} & \vect b \vect b^\t
    \end{pmatrix},
\]
from which we establish,
using~\cref{lemma:auxiliary_ineq_second_moment,lemma:bound_on_Pmu_new} and~\cref{assumpenum:assumption2,assumpenum:assumption3,assumpenum:assumption5},
that
\begin{align}
    \mathcal C(\phi_{\sharp} \op P \mu)
    \nonumber
    &\preccurlyeq \int_{\real^{\dimu}} \begin{pmatrix} u \\ \vect h(u) \end{pmatrix} \otimes \begin{pmatrix} u \\ \vect h(u) \end{pmatrix} \, \op P \mu(u) \, \d u  \\
    \nonumber
    &\preccurlyeq  2\int_{\real^{\dimu}}
    \begin{pmatrix}
        u u^\t & \mat 0_{\dimu \times \dimy} \\
        \mat 0_{\dimy \times \dimu} & \vect h(u) \vect h(u)^\t
    \end{pmatrix} \, \op P \mu(u) \, \d u\\
    \label{eq:bound_covariance_pushforward}
    &\preccurlyeq
    2
    \begin{pmatrix}
        \mat \Sigma + 2\kappa_{\vect \Psi}^2\bigl(1+\mathcal{M}_2(\mu) \bigr) \mat I_{\dimu} & \mat 0_{\dimu \times \dimy} \\
        \mat 0_{\dimy \times \dimu} & 2\kappa_{\vect h}^2\bigl(1+\trace(\mat \Sigma)+2\kappa_{\vect \Psi}^2\bigl(1+\mathcal{M}_2(\mu)\bigr) \bigr) \mat I_{\dimy}
    \end{pmatrix},
\end{align}
where we applied~\eqref{eq:covariance_Pmu_new} and the calculation resulting in~\eqref{eq:Pmuh_intemediate} in the last inequality.
Noting this inequality in combination with~\eqref{eq:decomposition_covariance_new} yields the upper bound~\eqref{eq:bound_on_QPmu_covariance_new_upper}.
We now show that
\begin{equation}
 \begin{pmatrix}
        \vect a \\ \vect b
    \end{pmatrix}^\t
    \mathcal C(\op Q \op P \mu)
    \begin{pmatrix}
        \vect a \\ \vect b
    \end{pmatrix}
    \geq (1 - \varepsilon) \sigma \vecnorm*{\vect a}^2  + \left(\gamma - \left(\frac{1}{\varepsilon} - 1\right) \op P \mu\bigl[|h|^2\bigr]  \right) \vecnorm{\vect b}^2.
\end{equation}
in order to establish the lower bound~\eqref{eq:bound_on_QPmu_covariance_new_lower}.
The argument is similar to that in \cite[Lemma B.2]{carrillo2022ensemble} but is
explicit in certain constants whose dependencies on moments need to be
controlled in this paper.
We first note that for any~$\pi \in \mathcal P(\real^{\dimu} \times \real^{\dimy})$
and all $(\vect a, \vect b) \in \real^{\dimu} \times \real^{\dimy}$ it holds that
\begin{align*}
    \vecnorm*{\vect a^\t \mathcal C^{uy}(\pi) \vect b}
    &= \int_{\real^{\dimu} \times \real^{\dimy}} \Bigl(\vect a^\t \bigl(u - \mathcal M^u(\pi)\bigr)\Bigr) \Bigl(\vect b^\t \bigl(y - \mathcal M^y(\pi)\bigr)\Bigr) \pi(\d u \d y) \\
    &\leq \sqrt{\vect a^\t \mathcal C^{uu}(\pi) \vect a} \, \sqrt{\vect b^\t \mathcal C^{yy}(\pi) \vect b},
\end{align*}
by the Cauchy--Schwarz inequality. Hence, we deduce that for all~$\varepsilon \in (0,1)$ and for all $(\vect a, \vect b) \in \real^{\dimu} \times \real^{\dimy}$
\begin{align*}
    \begin{pmatrix}
        \vect a \\ \vect b
    \end{pmatrix}^\t
    \mathcal C(\phi_{\sharp} \op P \mu)
    \begin{pmatrix}
        \vect a \\ \vect b
    \end{pmatrix}
    &\geq (1 - \varepsilon) \vect a^\t \mathcal C^{uu}(\phi_{\sharp} \op P \mu) \vect a - \left(\frac{1}{\varepsilon} - 1\right) \vect b^\t \mathcal C^{yy}(\phi_{\sharp} \op P \mu) \vect b \\
    &\geq (1 - \varepsilon) \vect a^\t \mat \Sigma \vect a - \left(\frac{1}{\varepsilon} - 1\right) \op P \mu\bigl[|h|^2\bigr],
\end{align*}
where we applied Young's inequality for the bound in the first line, and~\eqref{eq:bound_on_Pmu_new} and
the bound~$\mathcal C^{yy}(\phi_{\sharp} \op P \mu) \preccurlyeq \op P \mu\bigl[|h|^2\bigr]I_{d_y}$
for the bound in the second line.
We then apply~\eqref{eq:decomposition_covariance_new} to find
\begin{align*}
    \nonumber
    \begin{pmatrix}
        \vect a \\ \vect b
    \end{pmatrix}^\t
    \mathcal C(\op Q \op P \mu)
    \begin{pmatrix}
        \vect a \\ \vect b
    \end{pmatrix}
    &\geq (1 - \varepsilon) \vect a^\t \mat \Sigma \vect a - \left(\frac{1}{\varepsilon} - 1\right) \op P \mu\bigl[|h|^2\bigr] +  \vect b^\t \mat \Gamma \vect b \\
    &\geq (1 - \varepsilon) \sigma \vecnorm*{\vect a}^2  + \left(\gamma - \left(\frac{1}{\varepsilon} - 1\right) \op P \mu\bigl[|h|^2\bigr]  \right) \vecnorm{\vect b}^2.
\end{align*}
Choosing $\varepsilon = \frac{2\op P \mu\bigl[|h|^2\bigr]}{\gamma + 2\op P \mu\bigl[|h|^2\bigr]}$,
so that the coefficient of the $\vecnorm{\vect b}^2$ term is $\frac{\gamma}{2}$,
we obtain
\begin{equation}
    \label{eq:final_step_bound_covariance_QPmu}
    \begin{pmatrix}
        \vect a \\ \vect b
    \end{pmatrix}^\t
    \mathcal C(\op Q \op P \mu)
    \begin{pmatrix}
        \vect a \\ \vect b
    \end{pmatrix}
    \geq
    \frac{\gamma \sigma}
    {\gamma + 2 \op P \mu \bigl[ |h|^2 \bigr]} \vecnorm{a}^2 + \frac{\gamma}{2} \vecnorm{b}^2
    \geq
    \min \left\{ \frac{\gamma \sigma}{\gamma + 2 \op P \mu \bigl[ |h|^2 \bigr]}, \frac{\gamma}{2} \right\} \left( \vecnorm{a}^2 + \vecnorm{b}^2 \right).
\end{equation}
Since this is true for any~$(a, b) \in \real^{\dimu} \times \real^{\dimy}$,
it follows that
\[
    \mathcal C(\op Q \op P \mu)
    \succcurlyeq
    \gamma \frac{\min \left\{ 2\sigma, \gamma + 2 \op P \mu \bigl[ |h|^2 \bigr] \right\}}{2 \gamma + 4 \op P \mu \bigl[ |h|^2 \bigr]} \mat I_{\dimu + \dimy}.
\]
In order to deduce~\eqref{eq:bound_on_QPmu_covariance_new_lower},
we use~\eqref{eq:Pmuh_final} to find that
\[
    \op P \mu \bigl[ |h|^2 \bigr]
    \leq 2\kappa_{\vect h}^2
    \Bigl(1+\trace(\mat \Sigma)+2\kappa_{\vect \Psi}^2 \bigl(1+\mathcal{M}_2(\mu)\bigr) \Bigr),
\]
from which we conclude the desired result.
\end{proof}

\begin{lemma}
    [Moment Bound for the True Filtering Distribution]
    \label{lemma:true_filter_bounded_moments}
    Let $q\geq2$ be an integer. Assume that the probability measures $(\mu_j)_{j \in \range{0}{J}}$ are obtained from the dynamical system~\eqref{eq:true_filtering2}
    initialized at the probability measure $\mu_0\in~\mathcal P(\real^{\dimu})$ with bounded $q$th order polynomial moment $\mathcal{M}_q(\mu_0)<\infty$.
    If \cref{assumpenum:assumption1,assumpenum:assumption2,assumpenum:assumption3,assumpenum:assumption5} hold
    then there exists~$C = C\bigl(\mathcal{M}_q(\mu_0), J,\kappa_y, \kappa_{\vect \Psi}, \kappa_{\vect h}, \mat \Sigma, \mat \Gamma\bigr)$
    such that
    \[
        \max_{j \in \range{0}{J}} \mathcal{M}_q(\mu_j)\leq C.
    \]
\end{lemma}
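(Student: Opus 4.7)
The plan is to derive a recursive bound $\mathcal{M}_q(\mu_{j+1}) \le F\bigl(\mathcal{M}_q(\mu_j)\bigr)$ and then iterate it at most $J$ times. Combining~\eqref{eq:Markov_kernel},~\eqref{eq:decomposition_of_the_analysis} and~\eqref{eq:true_filtering}, we first observe that the $u$-marginal of $\op B_j \op Q \op P \mu_j$ coincides with the likelihood-reweighted measure
\[
    \mu_{j+1}(\d u) = \frac{\ell_j(u) \, \op P \mu_j(\d u)}{Z_j}, \qquad
    \ell_j(u) := \exp\left( - \tfrac{1}{2} \bigl\lvert y^\dagger_{j+1} - h(u) \bigr\rvert_{\mat \Gamma}^2 \right),
    \quad Z_j := \int_{\real^{d_u}} \ell_j(u) \, \op P \mu_j(\d u).
\]
Since $\ell_j(u) \le 1$, the numerator of $\mathcal{M}_q(\mu_{j+1})$ is at most $\mathcal{M}_q(\op P \mu_j)$, so the task reduces to controlling $\mathcal{M}_q(\op P \mu_j)$ from above and $Z_j$ from below.

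For the first, I would use that if $u \sim \op P\mu_j$ then $u = \Psi(v) + \xi$ with $v \sim \mu_j$ and $\xi \sim \normal(0,\mat\Sigma)$ independent, so by~\cref{assumpenum:assumption2} and the elementary bound $(a+b)^q \le 2^{q-1}(a^q + b^q)$,
\[
    \mathcal{M}_q(\op P \mu_j) \le 2^{q-1}\bigl(\expect\lvert \Psi(v)\rvert^q + \expect \lvert \xi\rvert^q\bigr) \le A_q \bigl(1 + \mathcal{M}_q(\mu_j)\bigr),
\]
for a constant $A_q$ depending only on $q,\kappa_{\Psi}$ and $\trace(\mat\Sigma)$.

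The main obstacle is the lower bound on $Z_j$, since the conditioning step can in principle amplify tails. I would proceed in two stages. First, combining~\cref{assumpenum:assumption1,assumpenum:assumption3,assumpenum:assumption5}, for $\lvert u\rvert \le 1$ one has $\lvert y^\dagger_{j+1} - h(u)\rvert^2 \le 2(\kappa_y^2 + 4\kappa_h^2)$, whence $\ell_j(u) \ge c_1 := \exp\bigl(-\gamma^{-1}(\kappa_y^2 + 4\kappa_h^2)\bigr)$ on the unit ball. Second, I would lower-bound $\op P \mu_j$ on this ball by restricting the convolution integral to $v \in B(0,R)$: using~\cref{assumpenum:assumption2,assumpenum:assumption5} to bound $\lvert u - \Psi(v) \rvert_{\mat\Sigma}^2 \le 2\sigma^{-1}\bigl(1 + \kappa_\Psi^2(1+R)^2\bigr)$ uniformly over $(u,v) \in B(0,1)\times B(0,R)$, one obtains
\[
    \op P \mu_j \bigl(B(0,1)\bigr) \ge c_2 \, \exp\!\bigl( -c_3(1+R^2) \bigr) \, \mu_j\bigl(B(0,R)\bigr),
\]
with $c_2, c_3$ depending only on $d_u, \mat\Sigma, \kappa_\Psi$. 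Choosing $R = \bigl(2\mathcal{M}_q(\mu_j)\bigr)^{1/q}$, Markov's inequality yields $\mu_j(B(0,R)) \ge \tfrac{1}{2}$, so
\[
    Z_j \ge \tfrac{1}{2} c_1 c_2 \, \exp\!\Bigl(-c_3\bigl(1 + \bigl(2\mathcal{M}_q(\mu_j)\bigr)^{2/q}\bigr)\Bigr).
\]

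Combining these estimates gives a recursion of the form
\[
    \mathcal{M}_q(\mu_{j+1}) \le B_q \bigl(1 + \mathcal{M}_q(\mu_j)\bigr) \exp\!\Bigl(D_q \bigl(1 + \mathcal{M}_q(\mu_j)^{2/q}\bigr)\Bigr),
\]
with $B_q, D_q$ depending only on the parameters listed in the statement. Iterating this $J$ times starting from the finite value $\mathcal{M}_q(\mu_0)$ yields a finite constant $C = C\bigl(\mathcal{M}_q(\mu_0), J, \kappa_y, \kappa_\Psi, \kappa_h, \mat\Sigma, \mat\Gamma\bigr)$ such that $\max_{j \in \range{0}{J}} \mathcal{M}_q(\mu_j) \le C$, as required. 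The bound is not uniform in $J$, which is consistent with the statement and the discussion preceding~\cref{theorem:main_theorem}.
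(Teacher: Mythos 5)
Your proposal is correct and follows the same skeleton as the paper's proof: write $\mu_{j+1}$ as the likelihood-reweighted pushforward $\op P\mu_j$, bound the numerator of $\mathcal{M}_q(\mu_{j+1})$ by $\mathcal{M}_q(\op P\mu_j)\le C(1+\mathcal{M}_q(\mu_j))$ using \cref{assumpenum:assumption2} and Gaussian moments, lower-bound the normalizing constant, and iterate $J$ times. The one step where you genuinely diverge is the lower bound on $Z_j$. The paper applies Jensen's inequality in the form $\int \e^{-f}\,\d\nu \ge \exp(-\int f\,\d\nu)$ directly to $\nu = \op P\mu_j$, which in one line yields $Z_j \ge C\exp\bigl(-4\kappa_h^2\kappa_\Psi^2\norm{\mat\Gamma^{-1}}\mathcal{M}_2(\mu_j)\bigr)$; you instead localize to $u\in B(0,1)$ and $v\in B(0,R)$ with $R$ chosen via Markov's inequality so that $\mu_j(B(0,R))\ge\tfrac12$, obtaining $Z_j\ge c\exp\bigl(-c'(1+\mathcal{M}_q(\mu_j)^{2/q})\bigr)$. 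Since $\mathcal{M}_2(\mu_j)\le\mathcal{M}_q(\mu_j)^{2/q}$ for $q\ge 2$, the two lower bounds are of the same order, and both produce a recursion whose $J$-fold iteration gives a finite (tower-exponential) constant. Your version has the minor advantage of being self-contained in the single moment $\mathcal{M}_q$ (the paper's recursion mixes $\mathcal{M}_2$ in the exponent with $\mathcal{M}_q$ in the prefactor, so closing it formally requires first running the $q=2$ case); the paper's Jensen argument is shorter and avoids the ball-truncation bookkeeping. One small point to tidy: your choice $R=(2\mathcal{M}_q(\mu_j))^{1/q}$ degenerates if $\mathcal{M}_q(\mu_j)$ is very small, so take $R=\max\{1,(2\mathcal{M}_q(\mu_j))^{1/q}\}$ to keep the constants clean.
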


\begin{proof}
We have $\mu_{j+1} = \op B_j \op Q \op P \mu_j$, for $j \in \range{0}{J-1}$.
Equivalently, using the notation from~\eqref{eq:true_filtering}, it holds that
$\mu_{j+1} = \op L_j \op P \mu_j$
for each $j\in\range{0}{J-1}$. Hence, we note that
\begin{equation}
\mu_{j+1} = \frac
    {\displaystyle \exp \left( - \frac{1}{2} \bigl\lvert y_{j+1}^{\dagger} - \vect h(u) \bigr\rvert_{\mat \Gamma}^2 \right)  \op P \mu_j(u)}
    {\displaystyle \int_{\real^{\dimu}} \exp \left( - \frac{1}{2} \bigl\lvert y_{j+1}^{\dagger} - \vect h(U) \bigr\rvert_{\mat \Gamma}^2 \right) \op P \mu_j(U) \, \d U}.
\end{equation}
It is readily observed that
\begin{align}
    \nonumber
    \mathcal{M}_q(\mu_{j+1}) &= \frac
    {\displaystyle \int_{\real^{\dimu}}|u|^q \exp \left( - \frac{1}{2} \bigl\lvert y_{j+1}^{\dagger} - \vect h(u) \bigr\rvert_{\mat \Gamma}^2 \right)  \op P \mu_j(u) \d u}
    {\displaystyle \int_{\real^{\dimu}} \exp \left( - \frac{1}{2} \bigl\lvert y_{j+1}^{\dagger} - \vect h(U) \bigr\rvert_{\mat \Gamma}^2 \right) \op P \mu_j(U) \, \d U}\\
    \label{eq:q_momentPmu}
    &\leq \frac
    {\displaystyle \int_{\real^{\dimu}}|u|^q \, \op P \mu_j(u) \d u}
    {\displaystyle \int_{\real^{\dimu}} \exp \left( - \frac{1}{2} \bigl\lvert y_{j+1}^{\dagger} - \vect h(U) \bigr\rvert_{\mat \Gamma}^2 \right) \, \op P \mu_j(U) \, \d U}.
\end{align}
We first bound from above the numerator of~\eqref{eq:q_momentPmu}; indeed, note that
\begin{subequations}
\begin{align}
\nonumber
    \int_{\real^{\dimu}}|u|^q \op P \mu_j(u) \d u &= \int_{\real^{\dimu}}|u|^q \left(\int_{\real^{\dimu}}\exp \left( - \frac{1}{2} \vecnorm*{u - \vect \Psi(v)}_{\mat \Sigma}^2 \right) \, \mu_j(\d v) \right)\d u\\
    \label{eq:fubini_Pmu}
    &= \int_{\real^{\dimu}} \left(\int_{\real^{\dimu}}|u|^q\exp \left( - \frac{1}{2} \vecnorm*{u - \vect \Psi(v)}_{\mat \Sigma}^2 \right) \, \d u \right)\mu_j(\d v)\\
    \label{eq:qmoment_Pmu2}
    &\leq C \int_{\real^{\dimu}} \bigl( 1 +\vecnorm*{\vect \Psi(v)}^q\bigr)\mu_j(\d v)\\
    \label{eq:qmoment_Pmu3}
    &\leq  C \Bigl(1+\mathcal{M}_{q}(\mu_j) \Bigr),
\end{align}
\end{subequations}
where~\eqref{eq:fubini_Pmu} follows from Fubini's theorem, the inequality~\eqref{eq:qmoment_Pmu2} from properties of Gaussians and~\cref{assumpenum:assumption5},
and~\eqref{eq:qmoment_Pmu3} from application of~\cref{assumpenum:assumption2}.
We note that in~\eqref{eq:qmoment_Pmu3} the constant $C$ depends on $\kappa_{\vect \Psi}, \mat \Sigma$.
Now, to obtain a lower bound on the denominator of~\eqref{eq:q_momentPmu}, we observe that
\begin{align}
    \nonumber
    \int_{\real^{\dimu}} \exp \left( - \frac{1}{2} \bigl\lvert y_{j+1}^{\dagger} - \vect h(u) \bigr\rvert_{\mat \Gamma}^2 \right) \op P \mu_j(u) \, \d u &\geq \exp \left( - \frac{1}{2} \int_{\real^{\dimu}} \bigl\lvert y_{j+1}^{\dagger} - \vect h(u) \bigr\rvert_{\mat \Gamma}^2 \op P \mu_j(u) \right)\\
    \nonumber
    &\geq C \exp \left( -  \norm*{\mat \Gamma^{-1}} \int_{\real^{\dimu} }\lvert h(u)\rvert^2  \op P \mu_j(u) \right) \\
    \label{eq:similar_reasoning}
    &\geq C \exp \Bigl( - 4\kappa^2_{\vect h}\kappa^2_{\vect \Psi} \norm*{\mat \Gamma^{-1}} \mathcal{M}_2(\mu_j) \Bigr),
\end{align}
where the first inequality follows by application of Jensen's inequality and~\eqref{eq:similar_reasoning} follows from the calculation leading to~\eqref{eq:Pmuh_final}.
We note that the $C$ in~\eqref{eq:similar_reasoning} is a constant depending on $\kappa_y, \kappa_{\vect \Psi}, \kappa_{\vect h}, \mat \Sigma, \mat \Gamma$.
Therefore, by combining~\eqref{eq:similar_reasoning} with~\eqref{eq:qmoment_Pmu3},
it is possible to deduce from \eqref{eq:q_momentPmu} that
\begin{equation}
    \label{eq:bound_moments_true}
    \mathcal{M}_q(\mu_{j+1})\leq C \exp \Bigl(4\kappa^2_{\vect h}\kappa^2_{\vect \Psi}\norm*{\mat \Gamma^{-1}}\mathcal{M}_2(\mu_j)\Bigr)\Bigl(1+\mathcal{M}_q(\mu_j) \Bigr),
\end{equation}
where $C$ is a constant depending on $\kappa_{\vect h}, \kappa_{\vect \Psi}, \kappa_{y}, \mat \Sigma, \mat \Gamma$.
\end{proof}

\begin{remark}

    \begin{itemize}
        \item
            In some situations,
            the bound~\eqref{eq:bound_moments_true} is overly pessimistic.
            For example, if $h$ satisfies~\cref{assumpenum:assumption3}
            as well as the inequality $\bigl\lvert h(u) - y^{\dagger}_{j+1} \bigr\rvert \geq c_{\ell} \left(|u| - 1\right)$ for all $u \in \real^{\dimu}$
            for some positive $c_\ell$,
            then by~\cite[Proposition A.3]{2023arXiv231207373J} for all $q > 0$
            there is $C = C(c_\ell, q)$ such that
            \[
                \forall \mu \in \mathcal P(\real^{\dimu}), \qquad
                \op L_j \mu \Bigl[ |x|^q \Bigr]
                \leq C \mu \Bigl[ |x|^q \Bigr].
            \]
            In this setting, better control of the moments can be achieved than in~\eqref{eq:bound_moments_true}.

        \item
            In obtaining~\eqref{eq:bound_moments_true},
            we did not use any information on~$\op P \mu_j$ other than the moment bound~\eqref{eq:qmoment_Pmu3}.
            With this approach, the presence of an exponential in the bound~\eqref{eq:bound_moments_true} is to be expected.
            Indeed, consider the case where $\dimu = 2$, $\dimy = 1$, $\mat \Gamma = \frac{1}{4}$, $y^\dagger_{j+1} = 0$,
            and
            \[
                h(u) = u_1, \qquad u = \begin{pmatrix} u_1 \\ u_2 \end{pmatrix},
                \qquad \mu_R = \frac{ x_R^2 \delta_{(R,0)} + \delta_{(0, x_R)} } {x_R^2 + 1},
                \qquad x_R = \e^{\frac{R^2}{2}}.
            \]
            Then, as $R \to \infty$, $\mu_R \left[ |u|^2 \right]\sim R^2$, while
            \[
                \op L_j \mu_R \left[ |u|^2 \right]
                = \frac{\int_{\real^2} (u_1^2 + u_2^2) \exp \left( - 2 u_1^2 \right) \, \mu_R(\d u_1 \d u_2)}
                {\int_{\real^2} \exp \left( - 2 u_1^2 \right) \, \mu_R(\d u_1 \d u_2)}
                = \frac{x_R^2 \e^{-2 R^2} R^2  + x_R^2}{x_R^2 \e^{-2 R^2} + 1} \sim x_R^2 = \e^{R^2}.
            \]
            Thus, for large~$R$,
            the second moment of $\op L_j \mu_R$ is approximately equal to the exponential of the second moment of $\mu_R$.
    \end{itemize}
\end{remark}

\begin{lemma}    [Moment Bound for the Approximate Filtering Distribution]
    \label{theorem:approximate_filter_bounded_moments}
    Let $q \ge 2$ be an integer.
    Assume that the probability measures~$(\mu^{\kalman}_j)_{j \in \range{0}{J}}$ are obtained from the dynamical system~\eqref{eq:compact_mf_enkf}
    initialized at the probability measure $\mu^{\kalman}_0\in~\mathcal P(\real^{\dimu})$ with bounded $q$th polynomial order moment $\mathcal{M}_q\bigl(\mu_0^{\kalman}\bigr)<\infty$.
    If \cref{assumpenum:assumption1,assumpenum:assumption2,assumpenum:assumption3,assumpenum:assumption5} hold
    then there exists~$C = C\Bigl(\mathcal{M}_q\bigl(\mu_0^{\kalman}\bigr),J,\kappa_y, \kappa_{\vect \Psi}, \kappa_{\vect h}, \mat \Sigma, \mat \Gamma\Bigr)$
    such that
    \[
        \max_{j \in \range{0}{J}} \mathcal{M}_q\bigl(\mu^{\kalman}_j\bigr)\leq C.
    \]
\end{lemma}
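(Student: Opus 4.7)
The plan is to work at the sample path level, using the representation \eqref{eq:ensemble_kalman_mean_field} of $\mu_j^{\kalman}$ as the law of $u_j$, and to establish a one-step recursive bound of the form $\mathcal{M}_q(\mu_{j+1}^{\kalman}) \le C\bigl(1+\mathcal{M}_q(\mu_j^{\kalman})\bigr)^p$ for some power $p \ge 1$ that is independent of $j$. Iterating this bound $J$ times then yields the theorem, with a constant that is allowed (and, in fact, expected) to depend exponentially on~$J$. Since $q \ge 2$ and $\mathcal{M}_q(\mu_0^{\kalman}) < \infty$, we automatically have $\mathcal{M}_2(\mu_j^{\kalman}) \le (\mathcal{M}_q(\mu_j^{\kalman}))^{2/q}$ by Jensen, so that $\mathcal{M}_2$-bounds are at our disposal.

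The first task is to control the Kalman gain $K_{j+1} := \mathcal{C}^{uy}(\widehat\pi^{\kalman}_{j+1}) \mathcal{C}^{yy}(\widehat\pi^{\kalman}_{j+1})^{-1}$. Since $\widehat\pi^{\kalman}_{j+1} = \op Q \op P \mu_j^{\kalman}$, I can apply \cref{lemma:bound_on_QPmu_new}: the upper bound \eqref{eq:bound_on_QPmu_covariance_new_upper} combined with the Cauchy--Schwarz bound $\|\mathcal{C}^{uy}\|^2 \le \|\mathcal{C}^{uu}\|\|\mathcal{C}^{yy}\|$ controls $\|\mathcal{C}^{uy}(\widehat\pi^{\kalman}_{j+1})\|$ polynomially in $\mathcal{M}_2(\mu_j^{\kalman})$, while the lower bound \eqref{eq:bound_on_QPmu_covariance_new_lower} yields $\|\mathcal{C}^{yy}(\widehat\pi^{\kalman}_{j+1})^{-1}\|$ bounded by a polynomial in $\mathcal{M}_2(\mu_j^{\kalman})$. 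Combining these, $\|K_{j+1}\| \le C\bigl(1 + \mathcal{M}_2(\mu_j^{\kalman})\bigr)^{\alpha}$ for some fixed $\alpha$ (deterministic, since $K_{j+1}$ depends only on $\op Q \op P \mu_j^{\kalman}$).

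Next, I would use the analysis step \eqref{eq:mean_field_ekf_analysis} to write
\[
|u_{j+1}| \le |\widehat u_{j+1}| + \|K_{j+1}\|\bigl(|y_{j+1}^\dagger| + |\vect h(\widehat u_{j+1})| + |\eta_{j+1}|\bigr),
\]
then raise to the $q$-th power, use convexity of $x\mapsto x^q$, and take expectations. Invoking \cref{assumpenum:assumption1,assumpenum:assumption3} to bound $|y_{j+1}^\dagger|$ and $|\vect h(\widehat u_{j+1})|$, and noting $\expect|\eta_{j+1}|^q$ is a constant depending on $\mat\Gamma$, one reduces the task to bounding $\expect|\widehat u_{j+1}|^q$. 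Since $\widehat u_{j+1} \sim \op P \mu_j^{\kalman}$, this is handled exactly as in \eqref{eq:qmoment_Pmu3}: one passes to the mixing kernel representation of $\op P$, applies Fubini, uses standard Gaussian moment bounds for the kernel together with \cref{assumpenum:assumption2}, obtaining $\int |u|^q \op P\mu_j^{\kalman}(\d u) \le C\bigl(1 + \mathcal{M}_q(\mu_j^{\kalman})\bigr)$. Collecting these estimates yields
\[
\mathcal{M}_q(\mu_{j+1}^{\kalman}) \le C\bigl(1 + \|K_{j+1}\|^q\bigr)\bigl(1 + \mathcal{M}_q(\mu_j^{\kalman})\bigr)\le C\bigl(1 + \mathcal{M}_q(\mu_j^{\kalman})\bigr)^p
\]
for some $p$ depending only on $q$ and $\alpha$.

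Finally, a discrete iteration of this inequality $J$ times gives a finite bound on $\mathcal{M}_q(\mu_j^{\kalman})$ for every $j \in \range{0}{J}$, depending on $\mathcal{M}_q(\mu_0^{\kalman})$, $J$, $\kappa_y$, $\kappa_{\vect\Psi}$, $\kappa_{\vect h}$, $\mat\Sigma$, $\mat\Gamma$, and nothing else. The main obstacle is the one just exposed: unlike in the true filter lemma where the Markov kernel $\op P$ is the only mechanism inflating moments, here the Kalman gain itself grows polynomially in $\mathcal{M}_2(\mu_j^{\kalman})$, so the recursion is superlinear. This superlinearity is the source of the $4^J$-type exponents appearing elsewhere in the paper (see, e.g., the exponent in \cref{theorem:enkf_theorem}), and it is essential to allow the final constant to depend exponentially on $J$; no uniform-in-$J$ bound can be expected from this argument.
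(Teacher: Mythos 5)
Your proposal is correct and follows essentially the same route as the paper's proof: both pull the Kalman gain out of the analysis-step update, bound its norm polynomially in $\mathcal M_2\bigl(\mu_j^{\kalman}\bigr)$ via the upper and lower covariance bounds of \cref{lemma:bound_on_QPmu_new}, control the remaining $q$th moments of $\op Q\op P\mu_j^{\kalman}$ by the argument of \eqref{eq:qmoment_Pmu3}, and iterate the resulting superlinear recursion over $j \in \range{0}{J-1}$. The only cosmetic difference is that you work with the random variables in \eqref{eq:ensemble_kalman_mean_field} while the paper integrates the transport map $\mathscr T$ against $\op Q\op P\mu_j^{\kalman}$ directly; these are the same computation.
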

\begin{proof}
    We begin by noting that $\mu^{\kalman}_{j+1} = \op T_j \op Q \op P \mu^{\kalman}_j,$
    for $j \in \range{0}{J-1}$. Thus
    \begin{align}
    \nonumber
    \mathcal{M}_q\bigl(\mu^{\kalman}_{j+1} \bigr)
    &= \int_{\real^{\dimu}} |u|^q\op T_j\op Q \op P \mu_j^{\kalman}(\d u) \\
    \nonumber
    &=\int_{\real^{\dimu} \times \real^{\dimy}} \vecnorm[\Big]{\mathscr{T} (u, y;\op Q \op P \mu_j^{\kalman}, y^\dagger_{j+1})}^q \,\op Q \op P \mu_j^{\kalman}(u,y)\,\d y\, \d u \\
    \nonumber
    &=\int_{\real^{\dimu}\times \real^{\dimy}} \vecnorm[\Big]{u +\mathcal C^{uy} (\op Q \op P \mu_j^{\kalman}) \mathcal C^{yy}(\op Q \op P \mu_j^{\kalman})^{-1}\bigl(y^\dagger_{j+1}-y \bigr)}^q\,\op Q \op P \mu_j^{\kalman}(u,y)\,\d y\, \d u\\
    \nonumber
    &\leq C\int_{\real^{\dimu}\times \real^{\dimy}}|u|^q \,\op Q \op P \mu_j^{\kalman}(u,y)\,\d y\, \d u \\
    \label{eq:boundedness_of_covariance}
    &\qquad \qquad + C \Bigl(1+ \mathcal{M}_2\bigl(\mu^{\kalman}_{j} \bigr)\Bigr)^{2 q} \cdot \left(1+\int_{\real^{\dimu}\times \real^{\dimy}}|y|^q \,\op Q \op P \mu_j^{\kalman}(u,y)\,\d y\, \d u \right),
    \end{align}
    where in~\eqref{eq:boundedness_of_covariance} the constant $C$ depends on $\kappa_y, \kappa_{\vect \Psi}, \kappa_{\vect h}, \mat \Sigma, \mat \Gamma$
    and where the dependence on the second moment of $\mu_j^{\kalman}$ in the second term is derived from~\eqref{eq:bound_on_QPmu_covariance_new_upper} and \eqref{eq:bound_on_QPmu_covariance_new_lower}.
    By using the definitions of $\op Q$ and $\op P$ and by applying reasoning analogous to~\eqref{eq:qmoment_Pmu3}, we deduce that
    \begin{align}
        \nonumber
        \mathcal{M}_q\bigl(\mu^{\kalman}_{j+1} \bigr) &\leq C \Bigl(1+ \mathcal{M}_2\bigl(\mu^{\kalman}_{j} \bigr)\Bigr)^{2q}\cdot \Bigl(1+ \mathcal{M}_q\bigl(\mu^{\kalman}_{j} \bigr)\Bigr),\\
        \nonumber &\leq C\Bigl(1+ \mathcal{M}_q\bigl(\mu^{\kalman}_{j} \bigr)\Bigr)^{2q+1}
    \end{align}
    where $C$ is a constant depending on $\kappa_y, \kappa_{\vect \Psi}, \kappa_{\vect h}, \mat \Sigma, \mat \Gamma$. Iteration gives the desired result.
\end{proof}

\begin{lemma}
    \label{lemma:moment_bound}
    Let $\mu_1, \mu_2 \in \mathcal P(\real^n)$ have finite second moments, then
    the following bounds hold:
    \begin{align*}
        \bigl\lvert \mathcal M(\mu_1) - \mathcal M(\mu_2) \bigr\rvert &\leq \frac{1}{2} d_g(\mu_1, \mu_2), \\
        \bigl\lVert \mathcal C(\mu_1) - \mathcal C(\mu_2) \bigr\rVert &\leq \left(1 + \frac{1}{2} \vecnorm{\mathcal M(\mu_1) + \mathcal M(\mu_2)}\right) \, d_g(\mu_1, \mu_2).
    \end{align*}
\end{lemma}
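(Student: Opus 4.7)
The plan is to derive both bounds by choosing suitable test functions $f$ in the supremum defining $d_g$, using that $g(v) = 1 + |v|^2$ dominates linear and quadratic expressions.

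For the mean inequality, I would fix a unit vector $e \in \real^n$ and consider the scalar $e \cdot \bigl(\mathcal M(\mu_1) - \mathcal M(\mu_2)\bigr) = (\mu_1 - \mu_2)[e \cdot v]$. By the elementary inequality $2|v| \leq 1 + |v|^2 = g(v)$, the function $f(v) := 2 e \cdot v$ satisfies $|f| \leq g$, so the definition of $d_g$ yields $2 \, |e \cdot (\mathcal M(\mu_1) - \mathcal M(\mu_2))| \leq d_g(\mu_1, \mu_2)$. Taking the supremum over unit vectors $e$ and using $|w| = \sup_{|e|=1} e \cdot w$ gives the first bound.

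For the covariance bound, I would use the identity $\mathcal C(\mu_i) = \mu_i[v v^\t] - \mathcal M(\mu_i) \mathcal M(\mu_i)^\t$ (which follows from \cref{lemma:auxiliary_ineq_second_moment}) to decompose
\[
    \mathcal C(\mu_1) - \mathcal C(\mu_2) = (\mu_1 - \mu_2)[v v^\t] - \Bigl(\mathcal M(\mu_1) \mathcal M(\mu_1)^\t - \mathcal M(\mu_2) \mathcal M(\mu_2)^\t\Bigr).
\]
For the first term, the variational formula $\lVert A \rVert = \sup_{|u|=|w|=1} u^\t A w$ and the test function $f(v) = (u \cdot v)(w \cdot v)$, which satisfies $|f(v)| \leq |v|^2 \leq g(v)$, give $\bigl\lVert (\mu_1 - \mu_2)[vv^\t] \bigr\rVert \leq d_g(\mu_1, \mu_2)$. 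For the second term, I would use the symmetrizing identity
\[
    \mathcal M(\mu_1)\mathcal M(\mu_1)^\t - \mathcal M(\mu_2)\mathcal M(\mu_2)^\t = \tfrac{1}{2}\bigl(\mathcal M(\mu_1) + \mathcal M(\mu_2)\bigr)\bigl(\mathcal M(\mu_1) - \mathcal M(\mu_2)\bigr)^\t + \tfrac{1}{2}\bigl(\mathcal M(\mu_1) - \mathcal M(\mu_2)\bigr)\bigl(\mathcal M(\mu_1) + \mathcal M(\mu_2)\bigr)^\t,
\]
which together with the submultiplicativity of the operator norm and the first (already-established) bound on the mean difference yields a factor of $\tfrac{1}{2} \lvert \mathcal M(\mu_1) + \mathcal M(\mu_2)\rvert \, d_g(\mu_1,\mu_2)$. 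Adding the two contributions via the triangle inequality gives exactly the stated constant $1 + \tfrac{1}{2}\lvert \mathcal M(\mu_1) + \mathcal M(\mu_2)\rvert$.

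Neither step poses a genuine obstacle; the only thing to be careful about is the factor of $2$ in the mean bound, which comes from optimally scaling the test function so that $|f| \leq g$ is tight (exploiting $2|v| \leq 1 + |v|^2$ rather than the cruder $|v| \leq g(v)$). Everything else reduces to linear-algebraic identities and the variational characterization of the operator norm.
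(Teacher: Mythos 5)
Your proof is correct. The paper itself does not prove this lemma but defers to \cite{carrillo2022ensemble} (Lemma B.4), and your argument is the standard one used there: test functions $2\,e\cdot v$ and $(u\cdot v)(w\cdot v)$ dominated by $g(v)=1+|v|^2$, together with the decomposition of the covariance difference into the second-moment difference and the symmetrized rank-one difference of mean outer products; the constants work out exactly as stated.
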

\begin{proof}
The lemma as stated may be found in \cite[Lemma B.4]{carrillo2022ensemble}, where a complete proof is given.
\end{proof}

\subsection{Action of \texorpdfstring{$\op T_j$}{} on Gaussians}
\label{sub:mean_field_is_conditioning_for_gaussians}
\begin{lemma}
    [$\op B_j \op G = \op T_j \op G$]
    \label{lemma:mean_field_map}
    Fix $y^\dagger_{j+1}\in\real^{\dimy}$ and let $\pi$ be a Gaussian measure over $\real^{\dimu} \times \real^{\dimy}$.
        Then the probability measure $\op B_j \pi$, with $\op B_j$ defined in~\eqref{eq:definition_B} is equivalent to the probability measure $\op T_j\pi =\mathscr T (\placeholder, \placeholder;\pi, y^\dagger_{j+1})_\sharp \pi$, where $\mathscr T$ is defined in~\eqref{eq:mean_field_map}.
\end{lemma}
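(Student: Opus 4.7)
The plan is to exploit the fact that both $\op B_j \pi$ and $\op T_j \pi$ are Gaussian measures on $\real^{\dimu}$ when $\pi$ is Gaussian on $\real^{\dimu}\times\real^{\dimy}$, so establishing equality reduces to matching first and second moments. Write $\pi = \normal(\mathcal M(\pi), \mathcal C(\pi))$ with the block decomposition from~\eqref{eq:covj}. By the classical Gaussian conditioning formula, $\op B_j\pi$ is the Gaussian with mean $\mathcal M^u(\pi) + \mathcal C^{uy}(\pi)\mathcal C^{yy}(\pi)^{-1}(y^\dagger_{j+1} - \mathcal M^y(\pi))$ and covariance $\mathcal C^{uu}(\pi) - \mathcal C^{uy}(\pi)\mathcal C^{yy}(\pi)^{-1}\mathcal C^{uy}(\pi)^\t$. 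This is the reference object against which $\op T_j\pi$ will be compared.

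For $\op T_j\pi$, the definition~\eqref{eq:Tmap}--\eqref{eq:wenan} gives $\op T_j\pi = \mathscr T(\cdot,\cdot;\pi, y^\dagger_{j+1})_\sharp\pi$, where $\mathscr T$ is affine in $(u,y)$. Since the pushforward of a Gaussian by an affine map is Gaussian, I only need to compute the mean and covariance of $\mathscr T(U,Y)$ for $(U,Y) \sim \pi$. The mean computation is immediate by linearity of expectation: $\expect[\mathscr T(U,Y)] = \mathcal M^u(\pi) + \mathcal C^{uy}(\pi)\mathcal C^{yy}(\pi)^{-1}(y^\dagger_{j+1} - \mathcal M^y(\pi))$, which coincides with the conditional mean above.

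The main (and only genuinely computational) step is verifying that the covariance of $\mathscr T(U,Y)$ agrees with the Schur complement $\mathcal C^{uu}(\pi) - \mathcal C^{uy}(\pi)\mathcal C^{yy}(\pi)^{-1}\mathcal C^{uy}(\pi)^\t$. Setting $A := \mathcal C^{uy}(\pi)\mathcal C^{yy}(\pi)^{-1}$ and noting that the deterministic shift $A\,y^\dagger_{j+1}$ does not affect the covariance, I compute
\[
    \mathrm{Cov}\bigl(U - A Y\bigr) = \mathcal C^{uu}(\pi) - A\,\mathcal C^{uy}(\pi)^\t - \mathcal C^{uy}(\pi)\,A^\t + A\,\mathcal C^{yy}(\pi)\,A^\t.
\]
Substituting $A = \mathcal C^{uy}(\pi)\mathcal C^{yy}(\pi)^{-1}$, the middle two terms each equal $\mathcal C^{uy}(\pi)\mathcal C^{yy}(\pi)^{-1}\mathcal C^{uy}(\pi)^\t$ and the last term equals $\mathcal C^{uy}(\pi)\mathcal C^{yy}(\pi)^{-1}\mathcal C^{uy}(\pi)^\t$ as well, so the sum telescopes to $\mathcal C^{uu}(\pi) - \mathcal C^{uy}(\pi)\mathcal C^{yy}(\pi)^{-1}\mathcal C^{uy}(\pi)^\t$, matching the conditional covariance. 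Since two Gaussians with the same mean and covariance coincide, $\op B_j\pi = \op T_j\pi$, which is the claim. There is no substantive obstacle; the only subtlety worth flagging is that $\mathcal C^{yy}(\pi)$ must be invertible, which is built into the domain $\mathcal P^2_{\succ 0}(\real^{\dimu}\times\real^{\dimy})$ of $\op T_j$ and is the natural nondegeneracy condition for $\op B_j$ as well.
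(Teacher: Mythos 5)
Your proof is correct and is essentially the standard argument that the paper defers to (it cites \cite[Lemma B.5]{carrillo2022ensemble} rather than reproducing the proof): both measures are Gaussian, the pushforward mean matches the conditional mean, and the covariance of $U - A Y$ with $A = \mathcal C^{uy}(\pi)\mathcal C^{yy}(\pi)^{-1}$ collapses to the Schur complement $\mathcal C^{uu}(\pi) - \mathcal C^{uy}(\pi)\mathcal C^{yy}(\pi)^{-1}\mathcal C^{uy}(\pi)^\t$, which is the conditional covariance. Your remark that invertibility of $\mathcal C^{yy}(\pi)$ is needed, and is supplied by the domain $\mathcal P^2_{\succ 0}(\real^{\dimu}\times\real^{\dimy})$ of $\op T_j$, is the right nondegeneracy caveat.
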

\begin{proof}
The lemma as stated may be found in \cite[Lemma B.5]{carrillo2022ensemble}, where a complete proof is given.
\end{proof}

\subsection{Stability Results}
\label{sub:stability_results}

\begin{lemma}
    [Map $\op P$ is Lischitz]
    \label{lemma:lipschitz_p}
    Suppose that~$\Sigma$ and~$\Psi$ satisfy~\cref{assumpenum:assumption2,assumpenum:assumption5}, respectively.
    Then it holds that
    \[
        \forall (\mu, \nu) \in \mathcal P(\real^{\dimu}) \times \mathcal P(\real^{\dimu}), \qquad
        d_g(\op P \mu, \op P \nu)
        \leq \Bigl(1 + 2\kappa_{\vect \Psi}^2 + \trace(\mat \Sigma)\Bigr) \, d_g (\mu, \nu).
    \]
\end{lemma}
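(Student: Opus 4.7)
The plan is to use the duality definition of $d_g$ directly. For any measurable $f$ with $|f| \le g$, Fubini gives
\[
\op P \mu[f] - \op P \nu[f] = \int_{\real^{\dimu}} \widetilde f(v) \, (\mu - \nu)(\d v),
\qquad
\widetilde f(v) := \int_{\real^{\dimu}} f(u) \, \normal\bigl(\vect \Psi(v), \mat \Sigma\bigr)(u) \, \d u.
\]
So the task reduces to bounding $|\widetilde f(v)|$ by a multiple of $g(v) = 1 + |v|^2$, with a constant that is independent of $f$ (within the class $|f|\le g$).

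For that bound, I would simply estimate
\[
|\widetilde f(v)| \le \int g(u)\, \normal\bigl(\vect \Psi(v), \mat \Sigma\bigr)(u)\, \d u
= 1 + \bigl|\vect \Psi(v)\bigr|^2 + \trace(\mat \Sigma),
\]
using the identity $\expect[|U|^2] = |\vect \Psi(v)|^2 + \trace(\mat \Sigma)$ for $U\sim\normal(\vect \Psi(v),\mat \Sigma)$. The linear-growth assumption \cref{assumpenum:assumption2} gives $|\vect \Psi(v)|^2 \le 2\kappa_{\vect \Psi}^2(1+|v|^2) = 2\kappa_{\vect \Psi}^2\, g(v)$, and since $g(v)\ge 1$ we get
\[
|\widetilde f(v)| \le \bigl(1 + 2\kappa_{\vect \Psi}^2 + \trace(\mat \Sigma)\bigr)\, g(v) =: L_{\op P}\, g(v).
\]

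Writing $\widetilde f / L_{\op P}$ as an admissible test function in the definition of $d_g$, one concludes
\[
\bigl|\op P \mu[f] - \op P \nu[f]\bigr|
= L_{\op P}\, \left|\int \bigl(\widetilde f / L_{\op P}\bigr) \, (\mu-\nu)(\d v)\right|
\le L_{\op P}\, d_g(\mu,\nu),
\]
and taking the supremum over $|f|\le g$ yields the claim. There is no genuine obstacle here: the key step is controlling the Gaussian second moment centered at $\vect \Psi(v)$ by $g(v)$, which is exactly where \cref{assumpenum:assumption2} enters; \cref{assumpenum:assumption5} is used implicitly through finiteness of $\trace(\mat \Sigma)$ but the lower bound $\sigma>0$ is not needed for this particular estimate.
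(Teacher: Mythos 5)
Your proof is correct and follows essentially the same route as the paper's: Fubini to pass the test function through the Gaussian kernel, the second-moment identity $\expect|U|^2 = |\vect\Psi(v)|^2 + \trace(\mat\Sigma)$, and \cref{assumpenum:assumption2} to dominate by $g(v)$, after which $\widetilde f/L_{\op P}$ is an admissible test function. Your closing remark that only finiteness of $\trace(\mat\Sigma)$, not the lower bound $\sigma>0$, is actually used here is also accurate.
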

\begin{proof}
    By definition of~$\op P$,
    it holds that
    \[
        \op P \mu(\d u) = \int_{\real^{\dimu}} p(v, \d u)  \, \mu(\d v),
        \qquad \text{ where } p(v, \d u) := \frac{\exp \left( - \frac{1}{2} \vecnorm*{u - \vect \Psi(v)}_{\mat \Sigma}^2 \right)}{\sqrt{(2\pi)^{\dimu} \det \mat \Sigma}} \, \d u.
    \]
    Let $g(u)=1+|u|^2$, as in \cref{definition:weighted_total_variation},
    and take any function $f\colon \real^{\dimu} \to \real$ such that $|f| \leq g$.
    \Cref{assumpenum:assumption2} implies that
    \begin{align*}
        \left\lvert \int_{\real^{\dimu}} f(u)  \, p(v, \d u)  \right\rvert
        \leq \int_{\real^{\dimu}} g(u)  \, p(v, \d u)
        &= 1+\left\lvert \vect \Psi(v) \right\rvert^2  + \trace(\mat \Sigma) \\
        &\leq 1 + 2\kappa_{\Psi}^2 + 2 \kappa_{\Psi}^2 \abs{v}^2 + \trace(\mat \Sigma)
        \leq \Bigl( 1 + 2\kappa_{\Psi}^2 + \trace(\mat \Sigma) \Bigr) g(v).
    \end{align*}
    By Fubini's theorem,
    it follows that
    \begin{align*}
        \Bigl\lvert \op P \mu [f] - \op P \nu [f] \Bigr\rvert
        &= \biggl\lvert \int_{\real^{\dimu}} \left( \int_{\real^{\dimu}} f(u)  \, p(v, \d u)  \right) \bigl(\mu(\d v) - \nu(\d v)\bigr)  \biggr\rvert \\
        &= \Bigl( 1 + 2\kappa_{\Psi}^2 + \trace(\mat \Sigma) \Bigr) \Bigl\lvert \mu [g] - \nu [g] \Bigr\rvert
        \leq \Bigl(1 + 2\kappa_{\vect \Psi}^2 + \trace(\mat \Sigma)\Bigr) \, d_g(\mu, \nu),
    \end{align*}
    thus concluding the proof.
\end{proof}

\begin{lemma}
    [Map $\op Q$ is Lipschitz]
    \label{lemma:lipschitz_Q}
    Suppose that $\Gamma$ and $h$ satisfy~\cref{assumpenum:assumption3,assumpenum:assumption5}, respectively.
    Then it holds that
    \begin{equation}
        \label{eq:lipschitz_Q}
        \forall (\mu, \nu) \in \mathcal P(\real^{\dimu}) \times \mathcal P(\real^{\dimu}), \qquad
        d_{g}(\op Q \mu, \op Q \nu)
        \leq \Bigl(1 + 2\kappa_{\vect h}^2 + \trace(\mat \Gamma) \Bigr) d_{g}(\mu, \nu).
    \end{equation}
\end{lemma}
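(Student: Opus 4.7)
The plan is to mimic the argument used for the Lipschitz continuity of $\op P$ in \cref{lemma:lipschitz_p}: write $\op Q \mu$ as a kernel integral of $\mu$ against a $u$-dependent Gaussian measure on $\real^{\dimy}$, and then exploit \cref{assumpenum:assumption3} on $\vect h$ to reduce the bound against a test function on the product space to a bound against a test function on state space alone. Concretely, for any measurable $f\colon \real^{\dimu}\times \real^{\dimy}\to\real$ with $|f(u,y)| \le g(u,y) = 1+|u|^2+|y|^2$, I would introduce
\[
    F(u) \coloneqq \int_{\real^{\dimy}} f(u,y)\, \normal\bigl(\vect h(u),\mat\Gamma\bigr)(\d y),
\]
so that by Fubini's theorem $(\op Q \mu)[f] = \mu[F]$ and similarly for $\nu$.

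The key step will be to bound $|F|$ by a multiple of the state-space weight $g_0(u) := 1+|u|^2$. Using that $\int_{\real^{\dimy}} (1+|y|^2)\, \normal(\vect h(u),\mat\Gamma)(\d y) = 1 + |\vect h(u)|^2 + \trace(\mat \Gamma)$, and then applying \cref{assumpenum:assumption3} in the form $|\vect h(u)|^2 \le 2\kappa_{\vect h}^2(1+|u|^2)$, I would obtain
\[
    |F(u)| \le 1 + |u|^2 + |\vect h(u)|^2 + \trace(\mat \Gamma)
    \le \bigl(1 + 2\kappa_{\vect h}^2 + \trace(\mat \Gamma)\bigr) g_0(u).
\]

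Once this pointwise bound is in place, the conclusion is immediate: applying the definition of $d_g$ on $\real^{\dimu}$ to the function $F/(1+2\kappa_{\vect h}^2+\trace(\mat \Gamma))$, which is dominated by $g_0$, gives
\[
    \bigl|(\op Q \mu)[f] - (\op Q \nu)[f]\bigr|
    = \bigl|\mu[F]-\nu[F]\bigr|
    \le \bigl(1+2\kappa_{\vect h}^2+\trace(\mat \Gamma)\bigr)\, d_g(\mu,\nu).
\]
Taking the supremum over all $f$ with $|f|\le g$ then yields~\eqref{eq:lipschitz_Q}.

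There is no substantive obstacle: the proof is a routine kernel/Fubini argument, and the structural parallel with the proof for $\op P$ suggests the constant has exactly the expected form. The only point requiring care is the overloading of the weight $g$ between state space and product space, and ensuring that \cref{assumpenum:assumption3} (linear growth on $\vect h$) is strong enough to absorb the $|y|^2$-moment of the Gaussian kernel into a multiple of $g_0(u)$; this is precisely what \cref{assumpenum:assumption3} together with \cref{assumpenum:assumption5} (which ensures $\trace(\mat \Gamma)$ is finite) is designed to provide.
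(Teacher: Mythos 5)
Your argument is exactly the paper's proof: the paper also defines $\Pi f(u) = \int_{\real^{\dimy}} f(u,y)\,\normal(\vect h(u),\mat\Gamma)(\d y)$, uses Fubini to reduce to $|\mu[\Pi f]-\nu[\Pi f]|$, bounds $|\Pi f(u)| \le 1+|u|^2+|\vect h(u)|^2+\trace(\mat\Gamma) \le (1+2\kappa_{\vect h}^2+\trace(\mat\Gamma))(1+|u|^2)$ via \cref{assumpenum:assumption3}, and concludes from the definition of $d_g$. The proposal is correct and complete.
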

\begin{proof}
    Take $f \colon \real^{\dimu} \times \real^{\dimy} \to \real$ satisfying $|f| \leq g$,
    where $g(u, v) = 1 + |u|^2 + |y|^2$.
    By Fubini's theorem,
    it holds that
    \[
         \op Q \mu[f] - \op Q \nu[f]
        =  \int_{\real^{\dimu}} \Pi f(u)  \, \Bigl( \mu(\d u) - \nu(\d u) \Bigr),
        \qquad \text{ where } \Pi f(u) := \int_{\real^{\dimy}}  f(u, y)  \, \normal\bigl(\vect h(u), \mat \Gamma\bigr) (\d y).
    \]
    The function $\Pi f \colon \real^{\dimu} \to \real$ satisfies
    \begin{align*}
        \forall u \in \real^{\dimu}, \qquad
        \left\lvert \Pi f(u) \right\rvert
        &\leq \int_{\real^{\dimy}}  \bigl\lvert f(u, y) \bigr\rvert  \, \normal\bigl(\vect h(u), \mat \Gamma\bigr) (\d y) \\
        &\leq \int_{\real^{\dimy}}  \left( 1 + |u|^2 + |y|^2 \right)  \, \normal\bigl(\vect h(u), \mat \Gamma\bigr) (\d y)  \\
        &= 1 + |u|^2 + \bigl\lvert h(u) \bigr\rvert^2 + \trace(\Gamma)
        \leq \Bigl(1 + 2\kappa_{\vect h}^2 + \trace(\mat \Gamma) \Bigr) \left(1 + \vecnorm{u}^2\right).
    \end{align*}
    Therefore,
    we deduce that
    \[
        \Bigl\lvert \op Q \mu[f] - \op Q \nu[f]  \Bigr\rvert
        \leq (1 + 2\kappa_{\vect h}^2 + \trace(\mat \Gamma) \Bigr) \, d_g(\mu, \nu),
    \]
    which concludes the proof.
\end{proof}

\begin{lemma}[Stability of Map $\op B_j$]
    \label{lemma:missing_lemma}
    Suppose that~\cref{assumpenum:assumption1,assumpenum:assumption2,assumpenum:assumption3,assumpenum:assumption5} are satisfied.
    Then for any probability measure $\mu \in \mathcal P(\real^{\dimu})$ with $\mathcal{M}_2(\mu)<\infty$  there exists a constant $C_{\op B} = C_{\op B}\bigl(\mathcal{M}_2(\mu), \kappa_y, \kappa_{\vect \Psi}, \kappa_{\vect h}, \mat \Sigma, \mat \Gamma\bigr) > 0$
    such that
    \[
        \forall j \in \llbracket 0, J \rrbracket,
        \qquad
        d_{g}(\op B_j \op G \op Q \op P \mu, \op B_j \op Q \op P \mu)
        \leq C_{\op B} d_{g}(\op G \op Q \op P \mu, \op Q \op P \mu).
    \]
\end{lemma}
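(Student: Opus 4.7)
\medskip
\noindent\textbf{Proof proposal.} Set $\pi \coloneqq \op Q \op P \mu$, $G \coloneqq \op G \op Q \op P \mu$, and let $Z_\pi = \int \pi(u, y^\dagger_{j+1})\,\d u$ and $Z_G = \int G(u, y^\dagger_{j+1})\,\d u$ denote the $y$-marginal densities of $\pi$ and $G$ evaluated at the current datum. For a test function $\phi\colon \real^{\dimu} \to \real$ with $|\phi(u)|\leq 1+|u|^2$, the proof starts from the decomposition
\[
    \op B_j G[\phi] - \op B_j \pi[\phi]
    = \frac{1}{Z_G}\int \phi(u)\bigl(G - \pi\bigr)(u, y^\dagger_{j+1})\,\d u
      + \frac{Z_\pi - Z_G}{Z_G}\,\op B_j \pi[\phi].
\]
Taking absolute values and then the supremum over admissible $\phi$ reduces the claim to three subtasks: (a) a positive lower bound on $Z_G$ (and $Z_\pi$), (b) a bound of the form $\int (1+|u|^2)|(G-\pi)(u, y^\dagger_{j+1})|\,\d u \leq C\,d_g(G,\pi)$, and (c) a uniform bound on $\op B_j \pi[1+|u|^2]$.

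For (a), the $y$-marginal of $G$ at $y^\dagger_{j+1}$ equals the Gaussian density $\normal\bigl(\mathcal M^y(\pi), \mathcal C^{yy}(\pi)\bigr)(y^\dagger_{j+1})$, and the bounds of \cref{lemma:bound_on_QPmu_new} together with $|y^\dagger_{j+1}|\leq \kappa_y$ from \cref{assumpenum:assumption1} produce $Z_G \geq c_G > 0$ depending only on the parameters listed in the statement. An analogous lower bound $Z_\pi \geq c_\pi > 0$ follows from $\pi(u,y) = \op P\mu(u)\normal(h(u),\Gamma)(y)$, \cref{assumpenum:assumption3,assumpenum:assumption5}, the moment bound on $\op P\mu$ from \cref{lemma:bound_on_Pmu_new}, and Jensen's inequality applied to the convex function $-\log$. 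For (c), one uses the trivial pointwise bound $\normal(h(u),\Gamma)(y^\dagger_{j+1}) \leq \bigl((2\pi)^{\dimy}\det\Gamma\bigr)^{-1/2}$ together with $\mathcal M_2(\op P \mu) \lesssim 1 + \mathcal M_2(\mu)$ from \cref{lemma:bound_on_Pmu_new} and the lower bound on $Z_\pi$ from (a).

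The heart of the argument is (b), which is where I expect the main technical difficulty. The plan is to exploit that for every fixed $u$, both $y \mapsto \pi(u,y)$ and $y \mapsto G(u,y)$ are scaled Gaussian densities in $y$. For $\pi$ this is immediate from $\pi(u,y) = \op P\mu(u)\normal(h(u),\Gamma)(y)$ with covariance $\Gamma$ controlled by \cref{assumpenum:assumption5}. For $G$, factoring the joint Gaussian into its $u$-marginal and the conditional $y|u$, the conditional covariance is the Schur complement $\mathcal C^{yy}(\pi) - \mathcal C^{yu}(\pi)\mathcal C^{uu}(\pi)^{-1}\mathcal C^{uy}(\pi)$; its upper bound follows from $\mathcal C^{yy}(\pi) \preccurlyeq C\mat I$, and its lower bound comes from inverting the upper bound $\mathcal C(\pi) \preccurlyeq C'\mat I$ of \cref{lemma:bound_on_QPmu_new} (which makes $\mathcal C(\pi)^{-1} \succcurlyeq (1/C')\mat I$, whose $yy$-block is the inverse of the Schur complement). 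Both conditional covariances therefore belong to a common set $\mathcal S^{\dimy}_\tau$ with $\tau$ controlled by the parameters. Applying \cref{lemma:gaussians_relation_1_and_inf_norms} in the $y$ variable, for each fixed $u$,
\[
    \bigl|G(u, y^\dagger_{j+1}) - \pi(u, y^\dagger_{j+1})\bigr|
    \leq L_{\dimy,\tau}\int_{\real^{\dimy}}\bigl|G(u,y) - \pi(u,y)\bigr|\,\d y.
\]
Multiplying by $(1+|u|^2)$, integrating in $u$, and using $1+|u|^2 \leq g(u,y)$ gives the desired bound $\int (1+|u|^2)|(G-\pi)(u,y^\dagger_{j+1})|\,\d u \leq L_{\dimy,\tau}\,d_g(G,\pi)$. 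Applying this bound (with $\phi \equiv 1$) also controls $|Z_\pi - Z_G|$ by $L_{\dimy,\tau}\,d_g(G,\pi)$.

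Combining the three subtasks, the constant $C_{\op B}$ is obtained as a product of $1/c_G$ with the constant $L_{\dimy,\tau}$ and the uniform bound on $\op B_j\pi[1+|u|^2]$; each factor depends only on $\mathcal M_2(\mu), \kappa_y, \kappa_\Psi, \kappa_h, \mat\Sigma, \mat\Gamma$, as required. The principal obstacle is the verification of the Schur-complement lower bound for the conditional covariance of $G$, which is the reason the proof must go through the two-sided control on $\mathcal C(\pi)$ afforded by \cref{lemma:bound_on_QPmu_new} rather than just the naive upper bound on $\mathcal C^{yy}(\pi)$.
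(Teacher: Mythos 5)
Your proposal follows essentially the same route as the paper's proof: the same decomposition into a numerator term and a normalization term, the same lower bounds on the two $y$-marginal densities at $y^\dagger_{j+1}$ (Gaussian formula for $\op G\op Q\op P\mu$, Jensen for $\op Q\op P\mu$), and the same key step of applying \cref{lemma:gaussians_relation_1_and_inf_norms} in the $y$ variable to the two conditional densities $y\mapsto \pi(u,y)$ and $y\mapsto G(u,y)$ for fixed $u$.

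There is, however, a direction error in your justification of the lower bound on the conditional covariance of the Gaussian projection. You claim the lower bound on the Schur complement $S = \mathcal C^{yy}(\pi) - \mathcal C^{yu}(\pi)\mathcal C^{uu}(\pi)^{-1}\mathcal C^{uy}(\pi)$ follows from the \emph{upper} bound $\mathcal C(\pi)\preccurlyeq C'\mat I$. But $\mathcal C(\pi)\preccurlyeq C'\mat I$ gives $\mathcal C(\pi)^{-1}\succcurlyeq (1/C')\mat I$, hence $S^{-1}\succcurlyeq (1/C')\mat I_{\dimy}$ for the $yy$-block, i.e.\ $S\preccurlyeq C'\mat I$ --- an upper bound you already have from $S\preccurlyeq\mathcal C^{yy}(\pi)$. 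The lower bound on $S$ instead requires the \emph{lower} bound $\mathcal C(\pi)\succcurlyeq c\,\mat I$ of \cref{lemma:bound_on_QPmu_new} (giving $\mathcal C(\pi)^{-1}\preccurlyeq c^{-1}\mat I$, hence $S^{-1}\preccurlyeq c^{-1}\mat I_{\dimy}$, hence $S\succcurlyeq c\,\mat I$). Since that lower bound is available in the same lemma, the fix is one line. The paper takes a slightly more structural shortcut here: because $\pi=\op Q\op P\mu$ has $\mathcal C^{yy}(\pi)=\mathcal C^{hh}(\pi)+\mat\Gamma$ with the observation noise independent of the state, one gets $S\succcurlyeq\mat\Gamma$ directly, with a constant independent of the moments of $\mu$.
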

\begin{proof}
    The proof of this lemma follows the steps of the proof of \cite[Lemma B.9]{carrillo2022ensemble}; however, each step requires different bounds reflecting the assumptions on $\vect \Psi$ and $\vect h$ in this paper. For ease of exposition we write $y^\dagger = y^\dagger_{j+1}$.
    We define the $y$-marginal densities
    \[
        \alpha_\mu(y):= \int_{\real^{\dimu}} \op G \op Q \op P\mu(u, y) \, \d u\,,
        \qquad
        \beta_\mu(y):=\int_{\real^{\dimu}} \op Q \op P\mu(u, y) \, \d u\,.
    \]
    By definition of~$\op B_j$,
    it holds that
    \begin{align}
        \notag
        &d_{g}(\op B_j \op G \op Q \op P \mu, \op B_j \op Q \op P \mu)
        = \int_{\real^{\dimu}} \left(1+|u|^2\right) \left| \frac{\op G \op Q \op P\mu(u,y^\dagger)}{\alpha_\mu(y^\dagger)} - \frac{\op Q \op P\mu(u,y^\dagger)}{\beta_\mu(y^\dagger)}\right| \, \d u\\
        \notag
        &\qquad \qquad \le \frac{1}{\alpha_\mu(y^\dagger)}\int_{\real^{\dimu}} \left(1+|u|^2\right) \bigl\lvert \op G \op Q \op P\mu(u,y^\dagger)-\op Q \op P\mu(u,y^\dagger)\bigr\rvert \, \d u \\
        \label{eq:main_equation}
        &\qquad \qquad \qquad + \left| \frac{\alpha_\mu(y^\dagger)-\beta_\mu(y^\dagger)}{\alpha_\mu(y^\dagger) \beta_\mu(y^\dagger)}\right|\,\int_{\real^{\dimu}} \left(1+|u|^2\right) \op Q \op P\mu(u,y^\dagger) \, \d u.
    \end{align}

    \paragraph{Step 1: bounding $\alpha_{\mu}(y^\dagger)$ and $\beta_{\mu}(y^\dagger)$ from below}
    The distribution $\alpha_{\mu}(\placeholder)$ is Gaussian with mean~$\mathcal M^y(\op Q \op P \mu)$ and covariance
    \begin{equation}
        \label{eq:covariance_matrix_second_gaussian}
        \mathcal C^{yy}(\op Q \op P \mu) = \mat \Gamma + \op P \mu\bigl[\vect h \otimes \vect h\bigr] - \op P \mu\bigl[\vect h\bigr] \otimes \op P \mu\bigl[\vect h\bigr].
    \end{equation}
    Clearly $\mathcal C^{yy}(\op Q \op P \mu)\succcurlyeq \mat \Gamma$.
    Furthermore,
    it holds that $\mathcal C^{yy}(\op Q \op P \mu) \preccurlyeq \mat \Gamma + 2\kappa_{\vect h}^2 \bigl(1+\trace(\mat \Sigma)+2\kappa_{\vect \Psi}^2\bigl(1+\mathcal{M}_2(\mu)\bigr) \bigr)\mat I_{\dimy}$ by~\eqref{eq:bound_covariance_pushforward}.
    Therefore, noting that~\eqref{eq:bound_on_QPmu_mean_new_2} implies that $\left\lvert \mathcal M^y(\op Q \op P \mu) \right\rvert \leq C$
    for a constant $C$ depending only on the parameters $\mathcal M_2(\mu), \kappa_{\vect h},\kappa_{\vect \Psi}, \mat \Sigma$,
    we obtain
        \begin{align}
            \nonumber
            \alpha_\mu(y)
            &= \frac{1}{\sqrt{(2\pi)^{\dimu} \det \bigl( \mathcal C^{yy}(\op Q \op P \mu) \bigr)}} \exp \left( - \frac{1}{2} \bigl(y - \mathcal M^{y} (\op Q \op P \mu) \bigr)^\t \mathcal C^{yy}(\op Q \op P \mu)^{-1} \bigl(y - \mathcal M^{y} (\op Q \op P \mu) \bigr) \right) \\
            \label{eq:lower_bound_alpha}
            &\geq \frac{\exp\Bigl(- \frac{1}{2}  \bigl(|y| + C\bigr)^2  \norm*{\mat \Gamma^{-1}}\Bigr)}{\sqrt{(2\pi)^{\dimy} \det \bigl(\mat \Gamma +  2\kappa_{\vect h}^2 \bigl(1+\trace(\mat \Sigma)+2\kappa_{\vect \Psi}^2\bigl(1+\mathcal{M}_2(\mu)\bigr) \bigr) \mat I_{\dimy} \bigr)}}.
        \end{align}
    The function $\beta_{\mu}$ can be bounded from below using similar reasoning.
    Indeed, by~\cref{assumpenum:assumption2,assumpenum:assumption3,assumpenum:assumption5} we have that for all $y \in \real^{\dimy}$,
    \begin{subequations}
    \begin{align}
        \nonumber
        \beta_{\mu}(y)
        &= \int_{\real^{\dimu}} \op Q \op P \mu(u, y) \, \d u
        = \int_{\real^{\dimu}} \frac{\exp\left(- \frac{1}{2} \bigl(y - \vect h(u)\bigr)^\t \mat \Gamma^{-1}  \bigl(y - \vect h(u)\bigr) \right)}{\sqrt{(2\pi)^{\dimy} \det(\mat\Gamma)}} \op P\mu(u) \, \d u \\
        \label{eq:jensen}
        &\geq \frac{1}{\sqrt{(2\pi)^{\dimy} \det (\mat \Gamma)}} \exp\Bigl(- \norm*{\mat \Gamma^{-1}}|y|^2 - \norm*{\mat \Gamma^{-1}}\kappa^2_{\vect h} \int_{\real^{\dimu}} \bigl(2 + |u|^2\bigr) \op P\mu(u) \, \d u\Bigr)\\
        \label{eq:lower_bound_beta}
        &\geq C\exp\Bigl(- \norm*{\mat \Gamma^{-1}}|y|^2 \Bigr),
    \end{align}
    \end{subequations}
    where we applied Jensen's inequality in~\eqref{eq:jensen} and
    the constant in~\eqref{eq:lower_bound_beta} depends on $\mathcal M_2(\mu), \kappa_{\vect \Psi}, \kappa_{\vect h}, \mat \Sigma, \mat \Gamma$.
    \paragraph{Step 2: bounding the first term in~\eqref{eq:main_equation}}
    First note that,
    for fixed $u \in \real^{\dimu}$,
    the functions~$\op Q \op P \mu(u, \placeholder)$ and~$\op G \op Q \op P \mu(u, \placeholder)$ are Gaussians up to constant factors,
    with covariance matrices given respectively by~$\mat \Gamma$ and
    \begin{equation}
        \label{eq:conditioned_covariance}
        \mathcal C^{yy}(\op Q \op P \mu) - \mathcal C^{yu} (\op Q \op P \mu) \mathcal C^{uu}(\op Q \op P \mu)^{-1}\mathcal C^{uy}(\op Q \op P \mu),
    \end{equation}
    where we have used the formula for the covariance of the conditional distribution of a Gaussian. We note that $\mathcal C^{yu} (\op Q \op P \mu) \mathcal C^{uu}(\op Q \op P \mu)^{-1}\mathcal C^{uy}(\op Q \op P \mu)$ is positive semi-definite, hence by~\eqref{eq:covariance_matrix_second_gaussian} and its upper bound it follows that the matrix~\eqref{eq:conditioned_covariance} is bounded from above by~${2}\kappa_{\vect h}^2\bigl(1+\trace(\mat \Sigma)+2\kappa_{\vect \Psi}^2\bigl(1+\mathcal{M}_2(\mu)\bigr) \bigr) \mat I_{\dimy}  + \mat \Gamma$.
    As shown in the proof of \cite[Lemma B.9]{carrillo2022ensemble}, the matrix \eqref{eq:conditioned_covariance} is bounded from below by $\Gamma$ (see \cite[Equation (B.20)]{carrillo2022ensemble}).
    It follows from using~\cref{lemma:gaussians_relation_1_and_inf_norms} in~\cref{app:A2},
    with parameter $\tau=\tau\bigl(\mathcal{M}_2(\mu), \kappa_{\vect h}, \kappa_{\vect \Psi}, \mat \Sigma, \mat \Gamma\bigr)$, that
    \begin{equation}
        \label{eq:bound_first_term}
        \int_{\real^{\dimu}} \left(1+|u|^2\right) \left| \op G \op Q \op P\mu(u,y)-\op Q \op P\mu(u,y)\right|\d u \leq C d_{g} (\op G \op Q \op P\mu, \op Q \op P \mu),
    \end{equation}
     where $C$ is a constant depending on $\mathcal{M}_2(\mu), \kappa_{\vect h}, \kappa_{\vect \Psi}, \mat \Sigma, \mat \Gamma$.
     We refer to \cite[Equation (B.21)]{carrillo2022ensemble} for the detailed steps used to establish this bound.

    \paragraph{Step 3: bounding the second term in~\eqref{eq:main_equation}}
    In view of~\eqref{eq:bound_first_term}, it holds that
    \[
        \bigl\lvert \alpha_\mu(y)-\beta_\mu(y) \bigr\rvert \leq \int_{\real^{\dimu}} \left(1+|u|^2\right) \bigl\lvert  \op G \op Q \op P\mu(u,y)-\op Q \op P\mu(u,y)\bigr \rvert\,\d u
         \leq C d_{g} (\op G \op Q \op P\mu, \op Q \op P \mu).
    \]
    Now,
    since $\op Q \op P \mu(u, \placeholder) / \op P \mu(u)$ defines a Gaussian density which has covariance $\mat \Gamma$ and is bounded uniformly from above by $\bigl((2\pi)^{\dimy} \det (\mat \Gamma)\bigr)^{-1/2}$,
    we also have that
    \[
        \int_{\real^{\dimu}} \left(1+|u|^2\right) \op Q \op P\mu(u,y) \, \d u
        \leq \int_{\real^{\dimu}}  \frac{ \left(1 + |u|^2\right) \op P \mu(u)}{\sqrt{(2\pi)^{\dimy} \det(\Gamma})} \, \d u
        = \frac{1 + \trace\bigl(\mathcal C(\op P\mu)\bigr) + \vecnorm*{\mathcal M(\op P\mu)}^2}{\sqrt{(2\pi)^{\dimy} \det (\mat \Gamma})}.
    \]
    By the moment bounds in~\cref{lemma:bound_on_Pmu_new},
    the right-hand side is bounded from above by a constant which depends on~$\mathcal{M}_2(\mu), \kappa_{\vect \Psi},\mat\Sigma$.

    \paragraph{Step 4: concluding the proof}
    Putting together the above bounds, we conclude that
    \begin{align*}
        &d_{g}(\op B_j \op G \op Q \op P \mu, \op B_j \op Q \op P \mu)
        \le \frac{C\bigl(\mathcal{M}_2(\mu),\kappa_{\vect \Psi}, \kappa_{\vect h}, \mat \Sigma, \mat \Gamma\bigr)}{\alpha_{\mu}(y^\dagger_{j+1})} \left(1 + \frac{1}{\beta_\mu(y^\dagger_{j+1})} \right)  d_{g} (\op G \op Q \op P\mu, \op Q \op P \mu).
    \end{align*}
    Applying the inequalities~\eqref{eq:lower_bound_alpha} and~\eqref{eq:lower_bound_beta} yields the desired result.
\end{proof}

\begin{lemma}
[Stability of Map $\op T_j$]
    \label{lemma:lipschitz_mean_field_affine}
    Suppose that \cref{assumption:ensemble_kalman} is satisfied.
    Then, for all $R \geq 1$, it holds that for any $\pi \in \mathcal P_R(\real^{\dimu} \times \real^{\dimy})$ and~$p \in \{\op Q \op P \mu: \mu \in \mathcal P(\real^{\dimu}) \,\text{and }\mathcal{M}_{2\cdot\max\{3+\dimu, 4+\dimy\}}(\mu) < \infty\}\subset \mathcal P(\real^{\dimu}\times\real^{\dimy})$, there is
    $L_{\op T} := L_{\op T}\bigl(R, \mathcal{M}_{2\cdot\max\{3+\dimu, 4+\dimy\}}(\mu), \kappa_y, \kappa_{\vect \Psi}, \kappa_{\vect h}, \ell_{\vect h}, \mat \Sigma, \mat \Gamma\bigr)$, such that
    \[
        \forall j \in \range{1}{J}, \qquad
        d_{g}(\op T_j \pi, \op T_j p)
        \leq L_{\op T} \, d_{g}(\pi, p).
    \]
\end{lemma}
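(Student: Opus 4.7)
\textbf{Proof proposal for \cref{lemma:lipschitz_mean_field_affine}.}
Write $z = y^\dagger_{j+1}$ and, for any probability measure $\rho \in \mathcal P^2_{\succ 0}(\real^{\dimu}\times\real^{\dimy})$, introduce the affine Kalman transport $T_\rho(u,y) = u + K_\rho(z-y)$ with Kalman gain $K_\rho = \mathcal C^{uy}(\rho)\mathcal C^{yy}(\rho)^{-1}$, so that $\op T_j \rho = (T_\rho)_\sharp \rho$. Take a test function $f\colon\real^{\dimu}\to\real$ with $|f|\le g$ and decompose by the triangle inequality:
\[
    (\op T_j\pi)[f] - (\op T_j p)[f] = \underbrace{\pi[f\circ T_\pi] - p[f\circ T_\pi]}_{I_1} + \underbrace{p[f\circ T_\pi] - p[f\circ T_p]}_{I_2}.
\]
By \cref{lemma:bound_on_QPmu_new} (applied with the moment bound $\mathcal M_q(\mu) < \infty$), $p$ itself belongs to some class $\mathcal P_{R'}(\real^{\dimu}\times\real^{\dimy})$ for an $R'$ depending on $\mathcal M_2(\mu)$ and the parameters of \cref{assumption:ensemble_kalman}.

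For the term $I_1$, because $\pi \in \mathcal P_R$ the Kalman gain satisfies $\|K_\pi\| \le R^4$, so $|f(T_\pi(u,y))| \le g(T_\pi(u,y)) = 1 + |u+K_\pi(z-y)|^2 \le C(R,\kappa_y)(1 + |u|^2 + |y|^2)$. Using the definition of the weighted total variation on the joint space, this immediately yields $|I_1| \le C(R,\kappa_y)\, d_g(\pi,p)$. Moreover, combining \cref{lemma:moment_bound} applied to $\pi$ and $p$ (whose means are bounded by $\max\{R,R'\}$) with the standard perturbation identity
\[
    K_\pi - K_p = \bigl(\mathcal C^{uy}(\pi) - \mathcal C^{uy}(p)\bigr)\mathcal C^{yy}(\pi)^{-1} + \mathcal C^{uy}(p)\mathcal C^{yy}(p)^{-1}\bigl(\mathcal C^{yy}(p) - \mathcal C^{yy}(\pi)\bigr)\mathcal C^{yy}(\pi)^{-1},
\]
we obtain $\|K_\pi - K_p\| \le C(R,R')\, d_g(\pi,p)$, a bound needed for $I_2$.

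The harder term $I_2$ is rewritten as the weighted total variation distance between the two push-forwards $(T_\pi)_\sharp p$ and $(T_p)_\sharp p$, both of which have Lebesgue densities. Marginalising in $y$ gives
\[
    (T_\rho)_\sharp p(v) = \int_{\real^{\dimy}} p\bigl(v - K_\rho(z-y),\,y\bigr)\,\d y,
\]
so that, by \cref{lemma:lipschitz_density_new} with $q = 2\max\{3+\dimu,4+\dimy\}$,
\[
    \bigl|(T_\pi)_\sharp p(v) - (T_p)_\sharp p(v)\bigr|
    \le L\,\|K_\pi - K_p\|\int (|z|+|y|)\, \min\bigl\{\max\{A_1,A_2\},B\bigr\}\,\d y,
\]
where $A_i = (1+|v-K_i(z-y)|^q)^{-1}$ and $B = (1+|y|^q)^{-1}$. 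Using the elementary bound $\min\{\max\{A_1,A_2\},B\} \le (A_1^{1/2}+A_2^{1/2})\,B^{1/2}$, a change of variables $w = v - K_i(z-y)$ together with the uniform bound on $\|K_i\|$ yields $\int g(v)\,A_i(v,y)^{1/2}\,\d v \le C(R,R')(1+|y|^2)$, where finiteness of the resulting integral requires $q/2 > \dimu + 2$. Integrating finally in $y$ leaves $\int (1+|y|^3)/(1+|y|^q)^{1/2}\,\d y$, which is finite precisely because $q/2 > \dimy + 3$; both conditions hold by the choice $q = 2\max\{3+\dimu,4+\dimy\}$.

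Combining these estimates gives $|I_2| \le d_g\bigl((T_\pi)_\sharp p,(T_p)_\sharp p\bigr) \le C(R,R',\mathcal M_q(\mu))\,\|K_\pi - K_p\| \le C'\,d_g(\pi,p)$, and together with the bound for $I_1$ this establishes the claimed Lipschitz estimate. The main obstacle is the bound on $I_2$: since test functions $f$ with $|f|\le g$ are only measurable, one cannot transfer regularity to $f$ and must instead exploit the regularity of the density of $p = \op Q\op P\mu$, which is the sole reason \cref{lemma:lipschitz_density_new} is invoked and why the restrictive moment condition $\mathcal M_{2\max\{3+\dimu,4+\dimy\}}(\mu) < \infty$ appears.
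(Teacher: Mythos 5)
Your proposal is correct and follows essentially the same route as the paper's proof: the same triangle-inequality decomposition through the intermediate measure $(T_\pi)_\sharp p$, the same gain bounds $\|K_\pi\| \le R^4$ and $\|K_\pi - K_p\| \lesssim d_g(\pi,p)$ via \cref{lemma:moment_bound}, and the same treatment of the hard term by transferring the estimate to the Lipschitz regularity of the density of $p = \op Q\op P\mu$ through \cref{lemma:lipschitz_density_new}, including the $\min\{a,b\}\le\sqrt{a}\sqrt{b}$ step and the exponent bookkeeping that fixes $q = 2\max\{3+\dimu,4+\dimy\}$. No substantive differences to report.
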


\begin{proof}
    By the results of \cref{lemma:bound_on_QPmu_new}, it holds that $\op Q \op P \mu \in \mathcal P_{\widetilde R}(\real^{\dimu} \times \real^{\dimy})$ for some $\widetilde R(\mathcal M_2(\mu), \kappa_{\vect \Psi}, \kappa_{\vect h}, \Sigma, \Gamma) \ge 1$.
    Using analogous notation to the one found in the proof of \cite[Lemma B.10]{carrillo2022ensemble} we define
    \[
        r = \max \Bigl\{R, \widetilde R, \kappa_y \Bigr\}.
    \]
     Letting $\mat K = \mathcal C(\pi)$, $\mat S = \mathcal C(p)$ and $y^\dagger = y^\dagger_{j+1}$, we also define the affine maps $\mathscr T^{\pi}$ and $\mathscr T^{p}$ corresponding to use
    of covariance information at the probability measures $\pi$ and $p= \op Q \op P \mu$,
    \begin{alignat*}{2}
        \mathscr T^{\pi}(u, y) &= u + \mat A_{\pi} (y^\dagger - y), \qquad & \mat A_{\pi} := \mat K_{uy} \mat K_{yy}^{-1}, \\
        \mathscr T^{p}(u, y) &= u + \mat A_{p} (y^\dagger - y), \qquad & \mat A_{p} := \mat S_{uy} \mat S_{yy}^{-1}.
    \end{alignat*}
    By a straightforward application of the triangle inequality, it holds that
    \begin{equation}
        \label{eq:main_equation_filtering}
        d_{g}(\op T_j \pi, \op T_j p)
        \leq  d_{g}(\mathscr T^{\pi}_{\sharp} \pi, \mathscr T^{\pi}_{\sharp} p) + d_{g} (\mathscr T^{\pi}_{\sharp} p, \mathscr T^{p}_{\sharp} p).
    \end{equation}
    In the following steps we will separately bound the two terms on the right hand side of~\eqref{eq:main_equation_filtering}. Before proceeding we outline two auxiliary bounds that will be used in the rest of the proof. These bounds are identical to the ones found in the proof of \cite[Lemma B.10]{carrillo2022ensemble}; we include statements here for expository purposes.
    Noting that the operator 2-norm of any submatrix is bounded from above by the operator 2-norm of the full matrix, we observe (see \cite[Equation (B.23)]{carrillo2022ensemble}) that
    \begin{equation}
        \label{eq:lipschitz_mean_field_auxiliary_1}
        \matnorm{\mat A_{\pi}} \leq \matnorm{\mat K_{uy}} \matnorm{\mat K_{yy}^{-1}}
        \leq \matnorm{\mat K} \matnorm{\mat K^{-1}} \leq r^4.
    \end{equation}
    Note that the above bound similarly holds for~$\mat A_p$.
    Using \eqref{eq:lipschitz_mean_field_auxiliary_1} and assuming without loss of generality that $r>1$, we deduce (see \cite[Equation (B.24)]{carrillo2022ensemble}) that
    \begin{equation}
        \label{eq:lipschitz_mean_field_auxiliary_2}
        \matnorm{\mat A_{\pi} - \mat A_p}
        \leq \matnorm{(\mat K_{uy} - \mat S_{uy}) \mat K_{yy}^{-1}} + \matnorm{\mat S_{uy} \left(\mat K_{yy}^{-1} - \mat S_{yy}^{-1}\right)}
        \leq 2r^6  \matnorm{\mat K - \mat S} \leq 2r^6 (1 + 2r) \, d_{g}(\pi, p),
    \end{equation}
    where the second inequality follows again from fact that the 2-norm of any submatrix is bounded from above by the 2-norm of the full matrix,
    while the last inequality follows from the result in~\cref{lemma:moment_bound}.

    \paragraph{Bounding the first term in~\eqref{eq:main_equation_filtering}}
    With an identical argument to the one used in the proof of \cite[Lemma B.10]{carrillo2022ensemble} we have that
    \begin{equation}
        \label{eq:stability_T_first_term}
        d_{g}(\mathscr T^{\pi}_{\sharp} \pi, \mathscr T^{\pi}_{\sharp} p)
        \leq 3 \left(1 + r^{10}\right) r^8 \, d_{g}(\pi, p).
    \end{equation}

    \paragraph{Bounding the second term in~\eqref{eq:main_equation_filtering}}
    Let $f$ again satisfy~$\abs{f} \leq g$. We note that
    \[
        \Bigl\lvert \mathscr T^{\pi}_{\sharp}p[f] - \mathscr T^{p}_{\sharp} p[f] \Bigr\rvert
        = \Bigl\lvert p[f\circ \mathscr T^{\pi}] - p[f\circ \mathscr T^{p}] \Bigr\rvert
        = \Bigl\lvert p[f\circ \mathscr T^{\pi} - f\circ \mathscr T^{p}] \Bigr\rvert.
    \]
    The last term may be expressed as
    \begin{align}
        \nonumber\Bigl\lvert p[f\circ \mathscr T^{\pi} - f\circ \mathscr T^{p}] \Bigr\rvert
        &= \abs*{\int_{\real^{\dimy}}\int_{\real^{\dimu}}
        \Bigl(f\left(u + \mat A_{\pi}\bigl(y^\dagger - y\bigr)\right) - f\left(u + \mat A_{p}\bigl(y^\dagger - y\bigr)\right)\Bigr) \, p(u,y) \, \d u \, \d y}\\
        \nonumber
        &=\int_{\real^{\dimy}} \int_{\real^{\dimu}} \bigl(f(u + \mat A_{\pi}  z ) - f(u + \mat A_{p}  z )\bigr) \,  p (u, y^\dagger -  z) \, \d u \, \d z \\
        \label{eq:filtering_second_term}
        &=\int_{\real^{\dimy}} \int_{\real^{\dimu}} f(v) \Bigl( p (v - \mat A_{\pi} z, y^\dagger -  z) -  p (v - \mat A_{p} z, y^\dagger -  z) \Bigr) \, \d v \, \d z,
    \end{align}
    where we used a change of variables in the second equality.
    Since $\mathcal{M}_{2\cdot\max\{3+\dimu, 4+\dimy\}}(\mu)<\infty$ by assumption, it follows from~\cref{lemma:lipschitz_density_new} and from the inequality $\min\{a,b\} \leq \sqrt{a} \sqrt{b}$ that there is a constant $C$ so that
    \begin{align*}
        &\Bigl\lvert  p (v - \mat A_{\pi} z, y^\dagger -  z) -  p (v - \mat A_{p} z, y^\dagger -  z) \Bigr \rvert \\
        & \qquad \leq  C\abs{\mat A_{\pi}z  - \mat A_{p} z} \cdot \max \left\{ \frac{1}{1 + \abs{v - \mat A_{\pi} z}^{\max\{3+\dimu, 4+\dimy\}}}, \frac{1}{1 + \abs{v - \mat A_{p} z}^{\max\{3+\dimu, 4+\dimy\}}} \right\} \\
        &\qquad\qquad\times \frac{1}{1 + \abs{y^{\dagger} - z}^{\max\{3+\dimu, 4+\dimy\}}} .
    \end{align*}
    We apply this inequality
    to bound for fixed $z \in \real^{\dimy}$ the inner integral in~\eqref{eq:filtering_second_term}.
    Considering only the terms that depend on~$v$ gives
    \begin{align}
        \label{eq:first_integral_line}
        &\int_{\real^{\dimu}} \lvert f(v) \rvert  \max \left\{ \frac{1}{1 + \abs{v - \mat A_{\pi} z}^{\max\{3+\dimu, 4+\dimy\}}}, \frac{1}{1 + \abs{v - \mat A_{p} z}^{\max\{3+\dimu, 4+\dimy\}}} \right\} \, \d v \\
        \notag
        &\qquad \leq
        \int_{\real^{\dimu}} \frac{\lvert f(v) \rvert}{1 + \abs{v - \mat A_{\pi} z}^{\max\{3+\dimu, 4+\dimy\}}} \, \d v
        + \int_{\real^{\dimu}}   \frac{\lvert f(v) \rvert}{1 + \abs{v - \mat A_{p} z}^{\max\{3+\dimu, 4+\dimy\}}} \, \d v \\
        \notag
        &\qquad \leq
        \int_{\real^{\dimu}} \frac{1 + \lvert v \rvert^2}{1 + \abs{v - \mat A_{\pi} z}^{\max\{3+\dimu, 4+\dimy\}}} \, \d v
        + \int_{\real^{\dimu}}   \frac{1 + \lvert v \rvert^2}{1 + \abs{v - \mat A_{p} z}^{\max\{3+\dimu, 4+\dimy\}}} \, \d v
        \\
        \notag
        &\qquad \leq
        \int_{\real^{\dimu}} \frac{1 + \lvert w + \mat A_{\pi} z \rvert^2}{1 + \abs{w}^{\max\{3+\dimu, 4+\dimy\}}} \, \d w
        + \int_{\real^{\dimu}}   \frac{1 + \lvert w + \mat A_{p} z \rvert^2}{1 + \abs{w}^{\max\{3+\dimu, 4+\dimy\}}} \, \d w,
    \end{align}
    where we used that $\lvert f(v) \rvert \leq 1 + \vecnorm{v}^2$,
    as well as a change of variable in the last line.
    It follows that the integral in~\eqref{eq:first_integral_line}
    is bounded from above by
    \[
        C \Bigl(1 + \lvert \mat A_{\pi} z \rvert^2 + \lvert \mat A_{p} z \rvert^2  \Bigr) \leq C r^8 \left(1 + \vecnorm{z}^2 \right),
    \]
    where the inequality follows from~\eqref{eq:lipschitz_mean_field_auxiliary_1}.
    Finally, the resulting integral in the~$z$ variable can be bounded analogously, which gives
    \begin{align}
        \nonumber
        d_{g} (\mathscr T^{\pi}_{\sharp} p, \mathscr T^{p}_{\sharp} p)
        &\leq C r^8 \int_{\real^{\dimy}} \frac{ 1 + \vecnorm{z}^2 }{1 + \abs{y^{\dagger} - z}^{\max\{3+\dimu, 4+\dimy\}}} \lvert \mat A_{\pi} z - \mat A_{p} z \rvert  \, \d z \\
        \label{eq:stability_T_second_term}
        &\leq C r^{11} \matnorm{\mat A_{\pi} - \mat A_{p}} \leq C r^{18} d_{g}(\pi, p),
    \end{align}
    where the last inequality follows from~\eqref{eq:lipschitz_mean_field_auxiliary_2}.
    Combining~\eqref{eq:stability_T_first_term} and \eqref{eq:stability_T_second_term} yields the desired result.
\end{proof}

\section{Technical Results for Approximation Result \texorpdfstring{in~\cref{prop:auxiliary_close_to_affine}}{}}
\label{appendix:C}

In~\cref{lemma:stabilityG} we recall the local Lipschitz continuity result for the operator $\op G$ established in \cite{carrillo2022ensemble}. \cref{lemma:auxiliary_close_to_affine_new} establishes that the filtering distribution is a locally Lipschitz function of $(\Psi,h)$, viewed as a mapping from Banach space equipped with the
$\|\cdot\|_\infty$ norm into the space of probability measures metrized using the $d_g$ distance.
This is preceded by \cref{lemma:3bounds} which establishes bounds used to prove this Lipschitz property. The two lemmas do not require $\vect \Psi_0$ and $\vect h_0$ to be affine,
but simply require that they both satisfy~\cref{assumpenum:assumption2,assumpenum:assumption3}.
This is in contrast with the more specific setting of~\cref{prop:auxiliary_close_to_affine},
which imposes an affine assumption on $(\vect \Psi_0, \vect h_0)$.

\begin{lemma}
    \label{lemma:stabilityG}
    For all $R\geq1$, there exists $L_{\op G} = L_{\op G}(R,n)$ so that for any $\mu_1,\mu_2\in \mathcal{P}_R(\real^n)$ it holds that
    \[
    d_g(\op G\mu_1,\op G\mu_2) \leq L_{\op G}(R,n)\cdot d_g(\mu_1,\mu_2).
    \]
\end{lemma}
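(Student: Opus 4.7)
The plan is to exploit that $\op G \mu$ depends on $\mu$ only through its mean and covariance, reduce the claim to a Lipschitz estimate for the Gaussian family parametrised by $(m,C)$ varying in a compact region, and then combine it with the moment bound of \cref{lemma:moment_bound}. Concretely, write $m_i = \mathcal M(\mu_i)$, $C_i = \mathcal C(\mu_i)$, so that $\op G\mu_i = \normal(m_i,C_i)$. The hypothesis $\mu_i \in \mathcal P_R(\real^n)$ yields $|m_i|\le R$ and $\tfrac{1}{R^2}\mat I_n \preccurlyeq C_i \preccurlyeq R^2 \mat I_n$, and \cref{lemma:moment_bound} gives
\[
|m_1 - m_2| \le \tfrac{1}{2}\, d_g(\mu_1,\mu_2), \qquad \|C_1 - C_2\| \le (1+R)\, d_g(\mu_1,\mu_2).
\]
Hence it suffices to produce $K=K(R,n)$ with
\[
d_g\bigl(\normal(m_1,C_1),\normal(m_2,C_2)\bigr) \le K\bigl(|m_1-m_2| + \|C_1-C_2\|\bigr),
\]
from which the lemma follows with $L_{\op G}(R,n) = K\bigl(\tfrac{1}{2} + 1 + R\bigr)$.

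For the parametric Lipschitz estimate I would use a linear interpolation. For $t\in[0,1]$ set $m_t = (1-t)m_1 + t m_2$ and $C_t = (1-t)C_1 + t C_2$; by convexity of the admissible parameter set, $|m_t|\le R$ and $\tfrac{1}{R^2}\mat I_n \preccurlyeq C_t \preccurlyeq R^2 \mat I_n$ uniformly in $t$. Denoting by $\rho_t$ the Lebesgue density of $\normal(m_t,C_t)$, the fundamental theorem of calculus gives pointwise
\[
\rho_2(v) - \rho_1(v) = \int_0^1 \bigl(\nabla_m \rho_t(v)\cdot(m_2 - m_1) + \trace\bigl(\nabla_C \rho_t(v)\,(C_2 - C_1)\bigr)\bigr)\,\d t,
\]
and explicit differentiation shows that $\nabla_m \rho_t$ and $\nabla_C \rho_t$ are $\rho_t(v)$ multiplied by polynomials in $v-m_t$ of degree at most $2$, with coefficients bounded in terms of $\|C_t^{-1}\|$. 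Applying Tonelli's theorem and the elementary inequality $g(v) = 1+|v|^2 \le 2\bigl(1+|m_t|^2 + |v-m_t|^2\bigr)$ reduces
\[
\int_{\real^n} g(v)\,\bigl|\rho_2(v)-\rho_1(v)\bigr|\,\d v
\]
to a finite sum of low-order polynomial moments of $\normal(0,C_t)$ weighted by $|m_2-m_1|$ and $\|C_2-C_1\|$. These moments are bounded uniformly in $t \in[0,1]$ by a constant depending only on $R$ and $n$, because $(m_t,C_t)$ ranges over a compact subset of the admissible parameter set.

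The main obstacle, though a modest one, is keeping track of the explicit form of $\nabla_m \rho_t$ and $\nabla_C \rho_t$ and verifying the uniform $g$-weighted integrability; this is a routine Gaussian moment computation that relies only on the eigenvalue bounds on $C_t$, so it yields constants that depend solely on $R$ and~$n$, as required.
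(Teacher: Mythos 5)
Your argument is correct. Note that the paper does not actually reprove this lemma: it simply cites \cite[Lemma B.12]{carrillo2022ensemble} for a complete proof, so you have supplied a self-contained argument where the paper offers none. Your route is the natural one and matches the structure of the cited proof: reduce to the parametric family $(m,C)\mapsto\normal(m,C)$ via \cref{lemma:moment_bound}, then establish a Lipschitz bound in $(m,C)$ uniformly over the compact set $|m|\le R$, $\tfrac{1}{R^2}\mat I_n\preccurlyeq C\preccurlyeq R^2\mat I_n$. The interpolation-plus-fundamental-theorem-of-calculus step is sound because convexity of the constraint set keeps $(m_t,C_t)$ admissible, the explicit derivatives $\nabla_m\rho_t = C_t^{-1}(v-m_t)\rho_t$ and $\nabla_C\rho_t=\tfrac12\bigl(C_t^{-1}(v-m_t)(v-m_t)^\t C_t^{-1}-C_t^{-1}\bigr)\rho_t$ are $\rho_t$ times degree-two polynomials with $R$-controlled coefficients, and the resulting fourth-order Gaussian moments are uniformly bounded in $t$. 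The only detail worth making explicit in a final write-up is the justification for exchanging the $t$- and $v$-integrals and for differentiating under the integral sign, both of which follow from the uniform lower bound $C_t\succcurlyeq R^{-2}\mat I_n$.
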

\begin{proof}
    The lemma as stated may be found in \cite[Lemma B.12]{carrillo2022ensemble}, where a complete proof is given.
\end{proof}

\begin{lemma}
    \label{lemma:3bounds}
    Suppose that the matrices $(\mat \Sigma, \mat \Gamma)$ satisfy~\cref{assumpenum:assumption5}.
    Fix $\kappa_{\Psi}, \kappa_h > 0$ and assume that~$\Psi_0 \colon \real^{\dimu} \to \real^{\dimu}$ and $\vect h_0 \colon \real^{\dimu} \to \real^{\dimy}$ are functions satisfying~\cref{assumpenum:assumption2,assumpenum:assumption3}.
    Then the following statements hold:
    \begin{itemize}
        \item
            There is a constant $C_p = C_p(\kappa_{\Psi}, \Sigma)$ such that for all $\varepsilon \in [0, 1]$ and all $(\Psi,h) \in B_{L^\infty}\bigl((\Psi_0,h_0),\varepsilon \bigr)$,
            \begin{equation}
                \label{eq:statement_P}
                \forall \mu \in \mathcal P(\real^{\dimu}), \qquad
                d_g(\op P_0 \mu, \op P \mu)
                \leq C_p \varepsilon \cdot
                \bigl( 1+\mathcal{M}_2(\mu) \bigr)
            \end{equation}
        \item
            There is a constant $C_q = C_q(\kappa_{h}, \Gamma)$ such that for all $\varepsilon \in [0, 1]$ and all $(\Psi,h) \in B_{L^\infty}\bigl((\Psi_0,h_0),\varepsilon \bigr)$,
            \begin{equation}
                \label{eq:statement_Q}
                \forall \mu \in \mathcal P(\real^{\dimu}), \qquad
                d_g(\op Q_0 \mu, \op Q \mu)
                \leq C_q \varepsilon \cdot
                \bigl( 1+\mathcal{M}_2(\mu) \bigr)
            \end{equation}
        \item
            There is $C_b = C_b(\kappa_{h}, \Gamma)$ such that
            for all $y^\dagger \in \real^{\dimy}$,
            all $\varepsilon \in [0, 1]$,
            all~$(\Psi,h) \in B_{L^\infty}\bigl((\Psi_0,h_0),\varepsilon \bigr)$,
            and all probability measures $(\mu_0, \mu) \in \mathcal P(\real^{\dimu}) \times \mathcal P(\real^{\dimu})$,
            \begin{equation}
                \label{eq:statement_BQ}
                d_g\Bigl(\op B(\op Q_0 \mu_0; y^{\dagger}), \op B (\op Q \mu, y^\dagger)\Bigr)
                \leq \exp(C_b \mathcal R) \Bigl(\varepsilon + d_g(\mu_0, \mu) \Bigr),
            \end{equation}
            where $\mathcal R \in [1, \infty]$ is given by
            \[
                \mathcal R = \max \left\{ \left\lvert y^\dagger \right\rvert^2, 1+  \mathcal M_2(\mu_0), 1 + \mathcal M_2(\mu) \right\}.
            \]
    \end{itemize}
    Here $\op P_0$ and $\op Q_0$ denote the maps associated to~$(\Psi_0, h_0)$,
    and $\op P$ and $\op Q$ are the maps associated to~$(\Psi, h)$.
\end{lemma}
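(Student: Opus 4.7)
The plan is to prove all three statements using the elementary inequality $|\e^{-a^2} - \e^{-b^2}| \leq 2|b - a|(\e^{-a^2/2} + \e^{-b^2/2})$ already exploited in \cref{lemma:lipschitz_density_new}, applied to the relevant Gaussian densities, together with a Jensen-based lower bound on normalizing constants of the type established in \eqref{eq:jensen}--\eqref{eq:lower_bound_beta}.

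For \eqref{eq:statement_P}, I would fix a test function $f$ with $|f| \leq g$ and write $\op P \mu(u) - \op P_0 \mu(u)$ as the integral against $\mu(\d v)$ of the difference $\normal(\Psi(v), \mat \Sigma)(u) - \normal(\Psi_0(v), \mat \Sigma)(u)$. Applying the elementary inequality with $a^2 = \frac{1}{2}|u - \Psi_0(v)|_{\mat \Sigma}^2$ and $b^2 = \frac{1}{2}|u - \Psi(v)|_{\mat \Sigma}^2$ gives $|b-a| \leq C \|\Psi - \Psi_0\|_\infty \leq C\varepsilon$. Multiplying by $g(u) = 1 + |u|^2$ and integrating in $u$ against the two resulting Gaussians of covariance $2\mat \Sigma$ produces a bound of the form $C\varepsilon(1 + |\Psi(v)|^2 + |\Psi_0(v)|^2)$; integrating in $v$ against $\mu$ and applying \cref{assumpenum:assumption2} then yields the claimed $C_p\varepsilon(1 + \mathcal M_2(\mu))$. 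For \eqref{eq:statement_Q} the argument is identical but simpler: the density difference factors as $[\normal(h(u), \mat \Gamma)(y) - \normal(h_0(u), \mat \Gamma)(y)]\mu(u)$, so the elementary inequality produces a pointwise bound of order $\varepsilon$ modulated by two Gaussians in $y$. Integrating $g(u, y) = 1 + |u|^2 + |y|^2$ in $y$ yields $C\varepsilon(1 + |u|^2 + |h(u)|^2 + |h_0(u)|^2)$, and integration in $u$ against $\mu$ combined with \cref{assumpenum:assumption3} delivers the conclusion.

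For \eqref{eq:statement_BQ}, denote $q(u) := \normal(h(u), \mat \Gamma)(y^\dagger)$, $q_0(u) := \normal(h_0(u), \mat \Gamma)(y^\dagger)$, $Z := \mu[q]$, and $Z_0 := \mu_0[q_0]$. For any $|f| \leq g$, I would use the decomposition
\[
    \op B(\op Q \mu; y^\dagger)[f] - \op B(\op Q_0 \mu_0; y^\dagger)[f]
    = \frac{\mu[fq] - \mu_0[fq_0]}{Z_0} + \frac{Z_0 - Z}{Z Z_0}\, \mu[fq],
\]
reducing the problem to four estimates: (i) an upper bound $|\mu[fq]| \leq C \mathcal R$ from $\|q\|_\infty \leq C$ and $|f| \leq g$; (ii) the control $|\mu[fq] - \mu_0[fq_0]| \leq C\|q\|_\infty d_g(\mu, \mu_0) + C \varepsilon(1 + \mathcal M_2(\mu_0))$ via the splitting $\mu[fq] - \mu_0[fq] + \mu_0[f(q-q_0)]$, where the second piece uses the elementary inequality applied to $|h - h_0|_{\mat \Gamma} \leq C\varepsilon$; (iii) the analogous bound $|Z - Z_0| \leq Cd_g(\mu, \mu_0) + C\varepsilon(1 + \mathcal M_2(\mu_0))$ obtained from (ii) by taking $f \equiv 1$; (iv) the lower bounds $Z, Z_0 \geq C\exp(-C'\mathcal R)$ from Jensen's inequality exactly as in \eqref{eq:jensen}--\eqref{eq:lower_bound_beta}, using $|y^\dagger|^2 \leq \mathcal R$ and $\mathcal M_2(\mu), \mathcal M_2(\mu_0) \leq \mathcal R$.

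The main technical subtlety lies in the final bookkeeping. The reciprocals of normalizers contribute a factor of order $\exp(C\mathcal R)$, and the polynomial multipliers of $\mathcal R$ generated by (i)--(iii) must be absorbed into this exponential. Concretely, the product $|Z-Z_0|/(ZZ_0) \cdot |\mu[fq]|$ carries a prefactor of $\mathcal R \cdot \exp(2C\mathcal R)$ multiplying $\varepsilon + d_g(\mu_0, \mu)$, which one swallows into an enlarged constant $C_b$ via the inequality $\mathcal R \exp(2C\mathcal R) \leq \exp(C_b\mathcal R)$ valid for $\mathcal R \geq 1$. This absorption is what forces the exponential-in-$\mathcal R$ form of the constant rather than a polynomial one, and it is the principal obstacle in formalising the argument.
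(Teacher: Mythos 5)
Your proposal is correct and follows essentially the same route as the paper's proof: the same elementary inequality for differences of Gaussian exponentials, the same numerator/normalizer decomposition of the Bayes update, the same Jensen-based lower bounds $Z, Z_0 \geq \exp(-C\mathcal R)$, and the same absorption of polynomial factors of $\mathcal R$ into the exponential constant. The only differences are cosmetic bookkeeping choices (e.g.\ which denominator is factored out of which term, and keeping the two shifted Gaussians separate rather than recombining them via Young's inequality).
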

\begin{proof}
    Throughout this proof, $C_p$ denotes a constant depending only on $(\kappa_{\Psi}, \Sigma)$,
    and $C_q$ is a constant that depends only on $(\kappa_{h}, \Gamma)$.
    Both may change from line to line.

    \paragraph{Proof of~\eqref{eq:statement_P}}
    It holds that
    \begin{equation*}
        \op P_0 \mu(u) - \op P \mu(u)
        = C_p \int_{\real^{\dimu}}
            \exp \left( - \frac{1}{2} \bigl\lvert u - \Psi_0(v) \bigr\rvert_{\Sigma}^2 \right)  -
            \exp \left( - \frac{1}{2} \bigl\lvert u - \Psi(v) \bigr\rvert_{\Sigma}^2 \right)
         \, \mu(\d v).
    \end{equation*}
    By the elementary inequality~\eqref{eq:elementary_inequality_gaussian_density},
    the integrand on the right-hand side is bounded in absolute value by
    \begin{align*}
        2\bigl\lvert \Psi_0(v) - \Psi(v) \bigr\rvert
        \left( \exp \left( - \frac{1}{4} \bigl\lvert u - \Psi_0(v) \bigr\rvert_{\Sigma}^2 \right) + \exp \left( - \frac{1}{4} \bigl\lvert u - \Psi(v) \bigr\rvert_{\Sigma}^2 \right) \right).
    \end{align*}
    By Young's inequality, it holds that $|a + b|^2 \geq \frac{1}{2} |a|^2 - |b|^2$,
    and so this is bounded by
    \begin{align*}
        4 \bigl\lvert \Psi_0(v) - \Psi(v) \bigr\rvert
        \left( \exp \left( - \frac{1}{8} \bigl\lvert u - \Psi_0(v) \bigr\rvert_{\Sigma}^2 + \frac{1}{4} \bigl\lvert \Psi_0(v) - \Psi(v) \bigr\rvert_{\Sigma}^2 \right) \right)
        \leq C_p \varepsilon \exp \left( - \frac{1}{8} \bigl\lvert u - \Psi_0(v) \bigr\rvert_{\Sigma}^2 \right).
    \end{align*}
    It follows, by Fubini's theorem, that
    \begin{align*}
        d_g(\op P_0 \mu, \op P \mu)
        &\leq C_p \varepsilon \int_{\real^{\dimu}} \int_{\real^{\dimu}}
        \bigl(1 + |u|^2\bigr)
        \exp \left( - \frac{1}{8} \bigl\lvert u - \Psi_0(v) \bigr\rvert_{\Sigma}^2 \right)
        \d u \, \mu(\d v) \\
        &\leq C_p \varepsilon \int_{\real^{\dimu}} \Bigl(1 + \bigl\lvert \Psi_0(v) \bigr\rvert^2 \Bigr)\, \mu(\d v)
        \leq C_p \varepsilon \, (1 + \kappa_{\Psi}^2) \int_{\real^{\dimu}} \left(1 + |v|^2\right) \, \mu(\d v).
    \end{align*}
    This concludes the proof of~\eqref{eq:statement_P}.

    \paragraph{Proof of~\eqref{eq:statement_Q}}
    Recall that
    \[
        d_g(\op Q_0 \mu, \op Q \mu)
        = C_q \int_{\real^{\dimu}} \int_{\real^{\dimy}}
            g(u, y)
            \left(
            \exp \left( - \frac{1}{2} \bigl\lvert y - h_0(u) \bigr\rvert_{\Gamma}^2 \right)  -
            \exp \left( - \frac{1}{2} \bigl\lvert y - h(u) \bigr\rvert_{\Gamma}^2 \right)
            \right)
            \, \d y \, \mu(\d u),
     \]
     where $g(u, y) = 1 + |u|^2 + |v|^2$.
     Using the same reasoning as above we obtain that
     \begin{equation}
         \label{eq:diff_exp}
         \left\lvert
             \exp \left( - \frac{1}{2} \bigl\lvert y - h_0(u) \bigr\rvert_{\Gamma}^2 \right)  -
             \exp \left( - \frac{1}{2} \bigl\lvert y - h(u) \bigr\rvert_{\Gamma}^2 \right)
         \right\rvert
         \leq C_q \varepsilon \exp \left( - \frac{1}{8} \bigl\lvert y - h_0(u) \bigr\rvert_{\Gamma}^2 \right).
     \end{equation}
     Therefore,
     we deduce that
     \[
        d_g(\op Q_0 \mu, \op Q \mu)
        \leq C_q \varepsilon \int_{\real^{\dimu}} \Bigl(1 + \bigl\lvert h_0(u) \bigr\rvert^2 \Bigr) \, \mu(\d u) \leq C_q \varepsilon (1 + \kappa_{h}^2) \int_{\real^{\dimu}} \bigl(1 + |u|^2\bigr) \, \mu(\d u),
    \]
    which proves~\eqref{eq:statement_Q}.

    \paragraph{Proof of~\eqref{eq:statement_BQ}}
    We assume for simplicity that $\mu_0$ and $\mu$ have densities,
    but note that this is not required.
    Let $\nu_0 = \op B(\op Q_0 \mu_0; y^{\dagger})$ and $\nu = \op B (\op Q \mu, y^\dagger)$ and
    recall that
    \[
        \nu_0(u) =
        \frac{\exp \left( - \frac{1}{2} \bigl\lvert y^\dagger - h_0(u) \bigr\rvert_{\Gamma}^2 \right)  \mu_0(u)}
        {\int_{\real^{\dimu}} \exp \left( - \frac{1}{2} \bigl\lvert y^\dagger - h_0(U) \bigr\rvert_{\Gamma}^2 \right) \mu_0(U) \, \d U}
        =: \frac{f_0(u)}{\int_{\real^{\dimu}} f_0(U) \, \d U}
        =: \frac{f_0(u)}{Z_0}
    \]
    and similarly
    \[
        \nu(u) =
        \frac{\exp \left( - \frac{1}{2} \bigl\lvert y^\dagger - h(u) \bigr\rvert_{\Gamma}^2 \right)  \mu(u)}
        {\int_{\real^{\dimu}} \exp \left( - \frac{1}{2} \bigl\lvert y^\dagger - h(U) \bigr\rvert_{\Gamma}^2 \right) \mu(U) \, \d U}
        =: \frac{f(u)}{\int_{\real^{\dimu}} f(U) \, \d U}
        =: \frac{f_0(u)}{Z}.
    \]
    Note that
    \begin{align*}
        d_g(\nu_0, \nu)
        &= \int_{\real^{\dimu}}
        \bigl(1 + |u|^2\bigr)
        \left\lvert\frac{f_0(u)}{Z_0} - \frac{f(u)}{Z} \right\rvert
        \, \d u \\
        &= \frac{1}{Z} \int_{\real^{\dimu}}
        \bigl(1 + |u|^2\bigr)
        \bigl\lvert f_0(u) - f(u) \bigr\rvert \, \d u
        + \left\lvert \frac{1}{Z_0} - \frac{1}{Z} \right\rvert \int_{\real^{\dimu}} \bigl(1 + |u|^2\bigr) \, f_0(u) \, \d u.
    \end{align*}
    In order to bound the first term,
    we write
    \begin{align}
        \notag
        f_0(u) - f(u)
        &=
        \left( \exp \left( - \frac{1}{2} \bigl\lvert y^\dagger - h_0(u) \bigr\rvert_{\Gamma}^2 \right) - \exp \left( - \frac{1}{2} \bigl\lvert y^\dagger - h(u) \bigr\rvert_{\Gamma}^2 \right) \right) \mu_0(u) \\
        \label{eq:three_terms_close_gaussian}
        &\qquad
        + \exp \left( - \frac{1}{2} \bigl\lvert y^\dagger - h(u) \bigr\rvert_{\Gamma}^2 \right) \Bigl(\mu_0(u) - \mu(u) \Bigr).
    \end{align}
    Using~\eqref{eq:diff_exp},
    we obtain that
    \begin{equation}
        \label{eq:bound_diff_fs}
        \bigl\lvert f_0(u) - f(u) \bigr\rvert
        \leq
        C_q \varepsilon \exp \left( - \frac{1}{8} \bigl\lvert y^\dagger - h_0(u) \bigr\rvert_{\Gamma}^2 \right) \mu_0(u)
        + \exp \left(- \frac{1}{2} \bigl\lvert y^\dagger - h(u) \bigr\rvert_{\Gamma}^2 \right) \Bigl\lvert \mu_0(u) - \mu(u) \Bigr\rvert,
    \end{equation}
    and so
    \begin{align*}
        \int_{\real^{\dimu}}
        \bigl(1 + |u|^2\bigr)
        \bigl\lvert f_0(u) - f(u) \bigr\rvert \, \d u
        &\leq
        C_q \varepsilon \mathcal R
        + d_g(\mu_0, \mu).
    \end{align*}
    Therefore it holds that
    \[
        d_g(\nu_0, \nu) \leq \mathcal R\left( \frac{C_q \varepsilon}{Z} +  \frac{\lvert Z_0 - Z \rvert}{Z_0 Z} \right)
        + \frac{1}{Z} d_g(\mu_0, \mu).
    \]
    By~\eqref{eq:bound_diff_fs} it holds that
    \[
        \lvert Z_0 - Z \rvert
        \leq \int_{\real^{\dimu}} \lvert f_0(U) - f(U) \rvert \, \d U
        \leq C_q \varepsilon + d_g(\mu_0, \mu),
    \]
    and so we obtain finally
    \[
        d_g(\nu_0, \nu) \leq \mathcal R \left( \frac{1}{Z} +  \frac{1}{Z_0 Z} \right)
        \Bigl( C_q \varepsilon + d_g(\mu_0, \mu) \Bigr).
    \]
    Furthermore $Z_0$ is bounded from below because,
    by Jensen's inequality,
    \begin{align*}
        Z_0 &= \int_{\real^{\dimu}} \exp \left( - \frac{1}{2} \bigl\lvert y^\dagger - h_0(U) \bigr\rvert_{\Gamma}^2 \right) \mu_0(U) \, \d U
        \geq \exp \left( - \frac{1}{2} \int_{\real^{\dimu}} \bigl\lvert y^\dagger - h_0(U) \bigr\rvert_{\Gamma}^2 \, \mu_0(U) \, \d U \right) \\
        &\geq \exp \left(- \bigl\lvert y^\dagger \bigr\rvert_{\Gamma}^2 - C_q \kappa_h^2 \int_{\real^{\dimu}} \bigl(1 + |u|^2\bigr) \mu_0(U) \, \d U \right)
        = \exp \left( - C_q \mathcal R \right).
    \end{align*}
    The same bound holds for~$Z$,
    and so we obtain the result.
\end{proof}

\begin{lemma}
\label{lemma:auxiliary_close_to_affine_new}
Suppose that the data~$Y^\dagger =\{y_j^\dagger\}_{j=1}^J$ and the matrices~ $(\mat \Sigma, \mat \Gamma)$ satisfy~\cref{assumpenum:assumption1,assumpenum:assumption5}.
    Fix $\kappa_{\Psi}, \kappa_h > 0$ and assume that $\Psi_0 \colon \real^{\dimu} \to \real^{\dimu}$ and $\vect h_0 \colon \real^{\dimu} \to \real^{\dimy}$ are functions satisfying~\cref{assumpenum:assumption2,assumpenum:assumption3}, respectively.
    Let $(\mu^0_{j})_{j \in \range{1}{J}}$ and $(\mu_{j})_{j \in \range{1}{J}}$ denote the true filtering distributions associated with
    functions~$(\Psi_0, \vect h_0)$ and~$(\Psi, \vect h)$, respectively, initialized at the same Gaussian probability measure~$\mu_0 = \normal(m_0, C_0) \in~\mathcal G(\real^{\dimu})$.
    For all $J \in \mathbb{Z}^+$ there is $C = C(m_0,C_0,\kappa_y, \kappa_{\Psi}, \kappa_h, \mat \Sigma, \mat \Gamma, J) > 0$ such that for all $\varepsilon \in [0, 1]$
    and all~$(\Psi, h) \in B_{L^\infty}\bigl((\Psi_0,h_0),\varepsilon \bigr)$,
    it holds that
    \begin{equation}
        \label{eq:first_equation_corollary_new}
        \max_{j \in \range{0}{J}}
        d_g(\mu^0_j, \mu_j)
        \leq C \varepsilon.
    \end{equation}
    \end{lemma}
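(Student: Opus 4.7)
The plan is to proceed by induction on $j$, exploiting the recursions $\mu^0_{j+1} = \op B_j \op Q_0 \op P_0 \mu^0_j$ and $\mu_{j+1} = \op B_j \op Q \op P \mu_j$. The base case $j=0$ is immediate because $\mu^0_0 = \mu_0 = \normal(m_0, C_0)$, so $d_g(\mu^0_0, \mu_0) = 0$. Before inducting, I would record the uniform moment bounds needed throughout: by \cref{lemma:true_filter_bounded_moments} applied to both filtering problems, there is a constant $M_2$, depending only on $(m_0, C_0, \kappa_y, \kappa_\Psi, \kappa_h, \mat\Sigma, \mat\Gamma, J)$, such that $\max_{j \in \range{0}{J}} \mathcal{M}_2(\mu^0_j) \vee \mathcal{M}_2(\mu_j) \leq M_2$. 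Combined with \cref{lemma:bound_on_Pmu_new} this implies an analogous uniform bound on $\mathcal{M}_2(\op P_0 \mu^0_j)$ and $\mathcal{M}_2(\op P \mu_j)$, and since data are bounded via \cref{assumpenum:assumption1}, the quantity $\mathcal R_j := \max\{|y^\dagger_{j+1}|^2, 1+\mathcal{M}_2(\op P_0 \mu^0_j), 1+\mathcal{M}_2(\op P \mu_j)\}$ appearing in \eqref{eq:statement_BQ} is uniformly bounded by some $\mathcal R_\star$ of the same dependence.

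For the inductive step, I would apply~\eqref{eq:statement_BQ} from \cref{lemma:3bounds} with $\mu_0 \leftarrow \op P_0 \mu^0_j$ and $\mu \leftarrow \op P \mu_j$ and $y^\dagger = y^\dagger_{j+1}$ to obtain
\[
    d_g(\mu^0_{j+1}, \mu_{j+1}) \leq \exp(C_b \mathcal R_\star)\Bigl(\varepsilon + d_g(\op P_0 \mu^0_j, \op P \mu_j)\Bigr).
\]
To process $d_g(\op P_0 \mu^0_j, \op P \mu_j)$ I would split via the triangle inequality and bound the two pieces using, respectively, \eqref{eq:statement_P} and the Lipschitz estimate of \cref{lemma:lipschitz_p}:
\[
    d_g(\op P_0 \mu^0_j, \op P \mu_j) \leq d_g(\op P_0 \mu^0_j, \op P \mu^0_j) + d_g(\op P \mu^0_j, \op P \mu_j) \leq C_p \varepsilon (1 + M_2) + L_{\op P}\, d_g(\mu^0_j, \mu_j).
\]
Substituting yields a recursion of the form $d_g(\mu^0_{j+1}, \mu_{j+1}) \leq A \varepsilon + B\, d_g(\mu^0_j, \mu_j)$ for constants $A, B$ with the permitted dependencies. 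Iterating this inequality from the zero base case and applying the discrete Gr\"onwall lemma gives $d_g(\mu^0_j, \mu_j) \leq A(1 + B + \dotsb + B^{J-1})\varepsilon \leq C \varepsilon$ uniformly over $j \in \range{0}{J}$, which is the desired bound.

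The main technical point to monitor is simply that every constant arising (in particular the exponential prefactor $\exp(C_b \mathcal R_\star)$ and the growing geometric sum in the Gr\"onwall step) depends only on the permitted parameters and not on $\varepsilon$ or on the particular $(\Psi,h) \in B_{L^\infty}((\Psi_0,h_0),\varepsilon)$. This is guaranteed because the moment control from \cref{lemma:true_filter_bounded_moments} depends on $\kappa_\Psi, \kappa_h$ (which are the same for $(\Psi_0,h_0)$ and $(\Psi,h)$ since $\varepsilon \leq 1$ and $(\Psi_0,h_0)$ satisfy \cref{assumpenum:assumption2,assumpenum:assumption3} with these constants), and because $C_p$, $L_{\op P}$, $C_b$ in \cref{lemma:3bounds,lemma:lipschitz_p} have the stated parameter dependencies. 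No feature of this argument requires $(\Psi_0, h_0)$ to be affine; the affine assumption from \cref{prop:auxiliary_close_to_affine} is only needed to apply this lemma in conjunction with the exact Gaussianity of the unperturbed filter.
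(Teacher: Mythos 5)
Your proposal is correct and follows essentially the same route as the paper's proof: induction on the time index, applying \eqref{eq:statement_BQ} from \cref{lemma:3bounds} to the analysis step, splitting $d_g(\op P_0 \mu^0_j, \op P \mu_j)$ by the triangle inequality into a perturbation term controlled by \eqref{eq:statement_P} and a propagation term controlled by \cref{lemma:lipschitz_p}, with the moment bounds from \cref{lemma:true_filter_bounded_moments} keeping all constants uniform. The only (harmless) difference is that you state the moment control for $\op P_0\mu^0_j$ and $\op P\mu_j$ explicitly before invoking \eqref{eq:statement_BQ}, which is in fact the slightly more careful bookkeeping.
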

\begin{proof}
    By~\cref{lemma:true_filter_bounded_moments},
    the filtering distributions have bounded second moments.
    Let
    \[
        \mathcal R = \max_{j \in \range{0}{J-1}}
        \left( \left\lvert y_{j+1}^\dagger \right\rvert^2 , \,1 + \mathcal{M}_2\bigl(\mu^0_j\bigr),\, 1+ \mathcal{M}_2\bigl(\mu_j\bigr) \right).
    \]
    Throughout this proof, $C$ denotes a constant whose value is irrelevant in the context,
    depends only on the constants $m_0,C_0,\kappa_y, \kappa_{\Psi}, \kappa_h, \mat \Sigma, \mat \Gamma, k$ (but neither on $\varepsilon$, nor on $\Psi$ and $h$)
    and may change from line to line.

    The statement is obviously true for $J = 0$.
    Reasoning by induction, we assume that the statement is true up to $J = k$
    and show that there is $C > 0$ such that
    \[
        \forall \varepsilon \in [0, 1], \qquad
        \forall (\Psi,h) \in B_{L^{\infty}}\bigl((\Psi_0,h_0), \varepsilon\bigr), \qquad
        d_g(\mu^0_{k+1}, \mu_{k+1}) \leq C \varepsilon.
    \]
    To this end,
    let $\op P_0$ and $\op Q_0$ denote the maps associated to~$(\Psi_0, h_0)$,
    fix $\varepsilon \in [0, 1]$,
    and fix~$(\Psi, h) \in B_{L^{\infty}}\bigl((\Psi_0,h_0), \varepsilon\bigr)$.
    Using~\eqref{eq:statement_BQ}, then the triangle inequality,
    and finally~\eqref{eq:statement_P} and~\cref{lemma:lipschitz_p},
    we have that
    \begin{align}
        \notag
        d_g(\mu^0_{k+1}, \mu_{k+1}) & =
        d_g\bigl(\op B_{k} \op Q_0 \op P_0 \mu^0_{k}, \op B_{k} \op Q \op P \mu_{k}\bigr)\\
        \notag
        &\leq \e^{C \mathcal R} \Bigl( \varepsilon + d_g\bigl(\op P_0 \mu^0_{k}, \op P \mu_{k}\bigr) \Bigr) \\
        \notag
        &\leq \e^{C \mathcal R} \Bigl( \varepsilon + d_g\bigl(\op P_0 \mu^0_{k}, \op P \mu^0_{k}\bigr) + d_g\bigl(\op P \mu^0_{k}, \op P \mu_{k}\bigr) \Bigr) \\
        \notag
        &\leq \e^{C \mathcal R} \Bigl( \varepsilon + C \varepsilon \mathcal R + C d_g(\mu^0_k, \mu_k) \Bigr).
    \end{align}
    Using the induction hypothesis gives us the desired bound.
\end{proof}

\section{Technical Result for Theorem \ref{theorem:enkf_theorem}}
\label{appendix:D}
In \cite{le2009large} machinery is established to prove Monte Carlo error estimates between
the finite particle ensemble Kalman filter and its mean field limit. We use such results
as a component in proving~\cref{theorem:enkf_theorem} and, in so-doing, explicit
dependence on moments must be tracked. In this section
we give a self-contained proof of~\cite[Theorem 5.2]{le2009large},
following the analysis closely\footnote{The result of~\cite{le2009large} holds more generally for functions $\vect \Psi$ that are locally Lipschitz and that grow at most polynomially at infinity~\cite[Assumption B]{le2009large}. However, our proof uses that $\Psi$ is globally Lipschitz, for instance in \eqref{eq:lip_meanfield1}.} and, in addition, tracking dependence on moments; this moment dependence
may be useful in the context of future work generalizing what we do in this paper.
This leads to the following error estimate stating the desired Monte Carlo error estimate.

\begin{lemma}
    \label{lemma:convergence_to_MF}
    Assume that the probability measures  $(\mu^{\kalman}_j)_{j \in \range{0}{J}}$ and $(\mu^{\kalmanN}_j)_{j \in \range{0}{J}}$ are obtained, respectively, from the dynamical systems~\eqref{eq:compact_mf_enkf} and ~\eqref{eq:kalman_measure},
    initialized at the Gaussian probability measure $\mu_0^{\kalman}\in~\mathcal G(\real^{\dimu})$ and at the empirical measure $\mu_{0}^{\kalmanN} =\frac1N\sum_{i=1}^N\delta_{u_0^{(i)}}$ for $u_0^{(i)}\sim \mu^{\kalman}_0$ i.i.d. samples.
    That~is,
    \[
        \mu^{\kalman}_{j+1} = \op T_j \op Q \op P \mu_j^{\kalman}, \qquad \mu^{\kalmanN}_{j+1}  = \frac1N\sum_{i=1}^N\delta_{u_{j+1}^{(i)}},
    \]
    where $u_{j+1}^{(i)}$ evolve according to the iteration in~\eqref{eq:ensemble_kalman_particle}.
    Suppose that the data~$Y^\dagger =\{y_j^\dagger\}_{j=1}^J$ and the matrices~$(\mat \Sigma, \mat \Gamma)$ satisfy~\cref{assumpenum:assumption1,assumpenum:assumption5}.
    We assume that the vector field $\vect \Psi$ satisfies~\cref{assumpenum:assumption2} and that $\vect h$ is a linear transformation,
    i.e.\ that \cref{assumpenum:affine_assumption2} is satisfied and hence also~\cref{assumpenum:assumption3}.
    Furthermore, if the vector field $\vect \Psi$ additionally satisfies $|\Psi|_{C^{0,1}} \leq \ell_{\vect \Psi} < \infty$,
    then for all $\phi$ satisfying~\cref{assumption:metric_vector_fields},
    there exists a constant~$C = C\bigl(\mathcal{M}_{q}(\mu^{\kalman}_0),J,R_\phi,L_\phi,\varsigma,\kappa_y, \kappa_{\vect \Psi}, \kappa_{\vect h}, \ell_{\vect \Psi}, \mat \Sigma, \mat \Gamma\bigr)$ where $q\coloneqq \max \{4^{J+1}, 4\varsigma, 2(\varsigma+1) \}$
    such that
    \[
        \left(\mathbb{E}~\Bigl|\mu^{\kalmanN}_J[\phi] -\mu^{\kalman}_J[\phi]\Bigr|^2\right)^{\frac{1}{2}} \leq \frac{C}{\sqrt{N}}.
    \]
\end{lemma}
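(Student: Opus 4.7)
Following the strategy of \cite{le2009large}, the plan is to introduce a synchronous coupling: on the same probability space supporting the driving noises $\{\xi^{(i)}_j, \eta^{(i)}_{j+1}\}$ and the initial samples $u_0^{(i)} \sim \mu_0^{\kalman}$, I construct an auxiliary i.i.d.\ system $\{\overline u_j^{(i)}\}$ of mean field replicas, governed by~\eqref{eq:ensemble_kalman_mean_field} with the \emph{exact} covariance $\mathcal C^{uy}(\widehat\pi_{j+1}^{\kalman})$, $\mathcal C^{yy}(\widehat\pi_{j+1}^{\kalman})$, and initialized so that $\overline u_0^{(i)} = u_0^{(i)}$. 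The two systems are then driven by the same noises at each step; the particles $\overline u_j^{(i)}$ are i.i.d.\ with common law $\mu_j^{\kalman}$, while the $u_j^{(i)}$ interact only through their empirical covariance.

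With this coupling, the triangle inequality yields
\[
    \left(\mathbb{E}\bigl| \mu^{\kalmanN}_J[\phi] - \mu^{\kalman}_J[\phi]\bigr|^2\right)^{1/2}
    \le \left(\mathbb{E}\biggl|\tfrac{1}{N}\sum_{i=1}^N \bigl(\phi(u_J^{(i)}) - \phi(\overline u_J^{(i)})\bigr)\biggr|^2\right)^{1/2}
    + \left(\mathbb{E}\biggl|\tfrac{1}{N}\sum_{i=1}^N \phi(\overline u_J^{(i)}) - \mu^{\kalman}_J[\phi]\biggr|^2\right)^{1/2}.
\]
The second term is a standard $L^2$ law-of-large-numbers estimate: since the $\overline u_J^{(i)}$ are i.i.d.\ with law~$\mu_J^{\kalman}$ and $|\phi(u)|\le R_\phi(1+|u|^{\varsigma+1})$, its variance is bounded by $\tfrac{1}{N}\mathbb{E}[\phi(\overline u_J^{(1)})^2] \leq C N^{-1}$, where the constant depends on $R_\phi$ and $\mathcal M_{2(\varsigma+1)}(\mu_J^{\kalman})$; the latter moment is finite and controlled via~\cref{theorem:approximate_filter_bounded_moments}. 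The first term is controlled using~\cref{assumption:metric_vector_fields}, Cauchy--Schwarz, and the bound on $\mathcal M_{4\varsigma}(\mu_J^{\kalman})$, by
\[
    L_\phi \cdot D_J^{N,4}\bigl(1 + \mathcal M_{4\varsigma}(\mu_J^{\kalman})^{1/4}\bigr) + L_\phi\, \bigl(D_J^{N,2(\varsigma+1)}\bigr)^{\varsigma+1},
    \qquad D_j^{N,p}:=\bigl(\mathbb{E}|u_j^{(i)} - \overline u_j^{(i)}|^p\bigr)^{1/p},
\]
where the moments of $\mu_J^{\kalman}$ are finite by~\cref{theorem:approximate_filter_bounded_moments}, so it remains to show $D_J^{N,p} = O(N^{-1/2})$ for $p \in \{4, 2(\varsigma+1)\}$.

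The core of the argument is therefore an inductive per-step estimate on $D_j^{N,p}$. Expanding one step of the dynamics, the difference $u_{j+1}^{(i)} - \overline u_{j+1}^{(i)}$ decomposes into (i) a contribution from $\Psi(u_j^{(i)}) - \Psi(\overline u_j^{(i)})$, controlled by the Lipschitz constant $\ell_\Psi$; (ii) a contribution from the difference of Kalman gains $\mathcal K(\widehat \pi_{j+1}^{\kalmanN}) - \mathcal K(\widehat \pi_{j+1}^{\kalman})$, which is where linearity of $\vect h$ is crucial since it makes the observation covariance an affine function of the forecast covariance, so that controlling $\mathcal K$ reduces to controlling the empirical covariance of the forecast ensemble; and (iii) cross terms. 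The empirical-covariance deviation $\lVert \mathcal C(\tfrac{1}{N}\sum_i \delta_{\overline v_{j+1}^{(i)}}) - \mathcal C(\op P \mu_j^{\kalman})\rVert$ is $O(N^{-1/2})$ in every $L^t$ norm by a standard Monte Carlo estimate whose constant depends on $\mathcal M_{2t}(\op P \mu_j^{\kalman})$, hence via~\cref{lemma:bound_on_Pmu_new} on $\mathcal M_{2t}(\mu_j^{\kalman})$. Combining these and using H\"older's inequality with exponents $(2,2)$ and $(3,3,3)$ on the various products, one arrives at a recursion of the form
\[
    D_j^{N,p} \le C\, D_{j-1}^{N,p}
    + C\, D_{j-1}^{N,3p}\bigl(1 + D_{j-1}^{N,3p}\bigr)^2
    + \frac{C}{\sqrt N}\bigl(1 + D_{j-1}^{N,2p}\bigr),
\]
where $C$ depends polynomially on the moments $\mathcal M_{4p}(\mu_{j-1}^{\kalman})$ and on $\ell_\Psi,\kappa_y,\kappa_\Psi,\kappa_h,\Sigma,\Gamma$. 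Starting from $D_0^{N,p}=0$ and iterating over $J$ steps, the exponent in $p$ inflates by a factor of $4$ per step, giving the final moment bound $q \le 4^{J+1}$; choosing $p = 4$ and $p = 2(\varsigma+1)$ at step $J$ then yields $D_J^{N,p} = O(N^{-1/2})$.

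The main obstacle is step~(ii): the Kalman gain is a \emph{nonlinear} function of the forecast covariance through the matrix inverse, so one must exploit lower bounds on $\mathcal C^{yy}(\widehat \pi_{j+1}^{\kalman}) \succcurlyeq \Gamma \succcurlyeq \gamma \mat I$ (from~\cref{assumpenum:assumption5}) together with the \cref{lemma:bound_on_Pmu_new}-type upper bounds on $\mathcal C(\op P \mu_j^{\kalman})$ to control the perturbation of the inverse. Tracking the moment order $4p$ that appears in the prefactor is what produces the exponential-in-$J$ power $4^{J+1}$ in the final constant.
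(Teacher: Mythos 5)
Your proposal is correct and follows essentially the same route as the paper: synchronous coupling of the interacting system with i.i.d.\ mean field replicas driven by the same noise, a triangle-inequality split into a law-of-large-numbers term and a coupling term, and an inductive propagation-of-chaos estimate on $D_j^{N,p}$ whose constant requires moments of order inflating geometrically in $j$, with linearity of $\vect h$ and the lower bound $\Gamma \succcurlyeq \gamma \mat I$ used exactly where you place them to control the Kalman-gain perturbation. The only cosmetic difference is bookkeeping: the paper phrases the induction directly as $\sup_N \sqrt{N}\,D_j^{N,p} \leq C_{j,p}$ with $C_{j,p}$ depending on $\mathcal M_{4^{j}p}(\mu_0^{\kalman})$ rather than via the closed-form recursion you quote, and it reduces the coupling term to $p=4$ and $p=4\varsigma$ rather than $p=2(\varsigma+1)$, but both variants lead to the stated moment order $q$.
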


\begin{proof}
    To prove the proposition, we apply the coupling argument used in \cite{le2009large}. Using similar notation to the one in \cite{le2009large}, to the interacting $N$-particle system $\bigl\{u^{(i)}_j\bigr\}_{n=1}^N$ evolving according to the ensemble Kalman dynamics~\eqref{eq:ensemble_kalman_particle},
    we couple $N$ copies of the mean field dynamics $\bigl\{\overline u^{(j)}_j\bigr\}_{n=1}^N$ evolving according to the mean field ensemble Kalman dynamics~\eqref{eq:ensemble_kalman_mean_field}.
    The mean field replicas are synchronously coupled to the interacting particle system,
    in the sense that they are initialized at the same initial condition and driven by the same noises; namely, the two particle systems are initialized at i.i.d. samples $u_0^{(i)} = \overline{u}_0^{(i)} \sim \mu_0^{\kalman}$ for $i=1,\ldots,N$.
    For simplicity of notation, the forecast particles are denoted by the letter~$v$,
    and we drop the hat notation from the forecast and simulated observations.
    Furthermore, we add a bar $\overline \placeholder$ to all the variables related to the synchronously coupled mean field particles,
    including the probability measures $\mfl_j$.
    We also define for~$\pi \in \mathcal P(\real^{\dimu \times \dimy})$ the Kalman gain
    \[
        \mathcal K(\pi) := \cov^{uh}\left(\pi\right) \left( \cov^{hh}\left(\pi\right) + \Gamma \right)^{-1}.
    \]
    With this notation, the interacting particle system,
    and synchronously coupled system read as follows:
    \[
    \begin{minipage}{.45\linewidth}
        {\large \emph{Interacting particle system}}
    \begin{subequations}
        \begin{alignat*}{2}
            \notag
            &\text{\bf Initialization: } u^{(i)}_0 = \overline{u}^{(i)}_0 \\
            &v^{(i)}_{j+1} = \Psi \Bigl(u^{(i)}_j\Bigr) + \xi^{(i)}_j, \\
            &y^{(i)}_{j+1} = h\Bigl(v^{(i)}_{j+1}\Bigr) + \eta^{(i)}_{j+1}, \\
            &u^{(i)}_{j+1} = v^{(i)}_{j+1} + \mathcal K\left(\emp_{j+1}\right) \left(y^\dagger_{j+1} - y^{(i)}_{j+1} \right).
        \end{alignat*}
    \end{subequations}
    \end{minipage}
    \begin{minipage}{.5\linewidth}
    \end{minipage}
        \begin{minipage}{.45\linewidth}
            {\large \emph{Synchronous coupling}}
            \begin{subequations}
                \begin{alignat*}{2}
                    \notag
            &\text{\bf Initialization: } \overline{u}^{(i)}_0 = u^{(i)}_0 \\
            &\overline{v}^{(i)}_{j+1} = \Psi \Bigl(\overline{u}^{(i)}_j\Bigr) + \xi^{(i)}_j, \\
            &\overline{y}^{(i)}_{j+1} = h\Bigl(\overline{v}^{(i)}_{j+1}\Bigr) + \eta^{(i)}_{j+1}, \\
            &\overline{u}^{(i)}_{j+1} = \overline{v}^{(i)}_{j+1} + \mathcal K\left(\mfl_{j+1}\right) \left(y^\dagger_{j+1} - \overline{y}^{(i)}_{j+1} \right).
                \end{alignat*}
            \end{subequations}
        \end{minipage}
\]
    \vspace{.3cm}

    \noindent Note that the synchronously coupled particles are independent and identically distributed. With this set-up we proceed with the proof. Let $\phi$ satisfy~\cref{assumption:metric_vector_fields}.
    By applying the triangle inequality, we deduce that
    \begin{align}
        \notag
        \left(\mathbb{E}~\Bigl|\mu^{\kalmanN}_J[\phi] -\mu^{\kalman}_J[\phi]\Bigr|^2\right)^{\frac{1}{2}}
        &\leq \left(\mathbb{E}~\Bigl|\mu^{\kalmanN}_J[\phi] -\frac1N\sum_{i=1}^N\phi\bigl(\overline{u}^{(i)}_J \bigr)\Bigr|^2\right)^{\frac{1}{2}} \\
        \label{eq:bound_on_D_01}
        &\qquad + \left(\mathbb{E}~\Bigl|\frac1N\sum_{i=1}^N\phi\bigl(\overline{u}^{(i)}_J\bigr) -\mu^{\kalman}_J[\phi]\Bigr|^2\right)^{\frac{1}{2}}.
    \end{align}

    \paragraph{Step 1: bounding the second term in~\eqref{eq:bound_on_D_01}}
        Noting that the random variables $\phi\bigl(\overline{u}^{(i)}_J \bigr) - \mu_J^{\kalman}[\phi]$ are i.i.d.\ and expanding the square,
        we obtain
        \begin{subequations}
            \begin{align*}
                \left(\mathbb{E}~\biggl\lvert\sum_{i=1}^N\frac1N\Bigl(\phi\bigl(\overline{u}^{(i)}_J\bigr) -\mu^{\kalman}_J[\phi]\Bigr)\biggr\rvert^2\right)^{\frac{1}{2}}
                = \frac{1}{\sqrt{N}}\Bigl( \mathbb{E}\bigl|\phi\bigl(\overline{u}^{(1)}_J \bigr) - \mu_J^{\kalman}[\phi] \bigr|^2\Bigr)^{\frac{1}{2}},
            \end{align*}
        \end{subequations}
    Since by \cref{assumption:metric_vector_fields} it holds that $|\phi(u)|\leq R_\phi\bigl(1+|u|^{{\varsigma}+1} \bigr)$ for any $u\in\real^{\dimu}$, it follows that
    \begin{equation}
        \label{eq:bound_on_D_11}
        \left(\mathbb{E}~\Bigl|\frac1N\sum_{i=1}^N\phi\bigl(\overline{u}^{(i)}_J\bigr) -\mu^{\kalman}_J[\phi]\Bigr|^2\right)^{\frac{1}{2}} \leq \frac{C\Bigl(\mathcal{M}_{2(\varsigma+1)}\bigl(\mu_0^{\kalman}\bigr),R_\phi,J,\kappa_y, \kappa_{\vect \Psi}, \kappa_{\vect h}, \mat \Sigma, \mat \Gamma\Bigr)}{\sqrt{N}},
    \end{equation}
    where we used~\cref{theorem:approximate_filter_bounded_moments} to bound $\mathcal{M}_{2(\varsigma+1)}\bigl(\mu_J^{\kalman}\bigr)$ in terms of $\mathcal{M}_{2(\varsigma+1)}\bigl(\mu_0^{\kalman}\bigr)$.
    \paragraph{Step 2: bounding the first term in~\eqref{eq:bound_on_D_01}}
    For the first term,
    by Jensen's inequality, exchangeability,
    and finally by \cref{assumption:metric_vector_fields} on~$\phi$ together with the Cauchy--Schwarz inequality,
    it holds without loss of generality that
    \begin{align}
    \notag
        \left(\expect~\Bigl|\frac1N\sum_{i=1}^N \phi\Bigl(u^{(i)}_J \Bigr)-\phi\Bigl(\overline{u}^{(i)}_J \Bigr)\Bigr|^2\right)^{\frac{1}{2}}
        &\leq
         \left(\frac1N\sum_{i=1}^N \expect\Bigl| \phi\Bigl(u^{(i)}_J \Bigr)-\phi\Bigl(\overline{u}^{(i)}_J \Bigr)\Bigr|^2 \right) ^{\frac{1}{2}} \notag \\
        &= \notag \left(\expect\Bigl| \phi\Bigl(u^{(1)}_J \Bigr)-\phi\Bigl(\overline{u}^{(1)}_J \Bigr)\Bigr|^2 \right)^{\frac{1}{2}} \\
        &\label{eq:first_term}\leq 3L_{\phi} \left(\expect \left\lvert u^{(1)}_J - \overline{u}^{(1)}_J \right\rvert^4 \right)^{\frac{1}{4}}
        \left( \expect \left[1 + \left\lvert u^{(1)}_J \right\rvert^{4\varsigma} + \left\lvert \overline{u}^{(1)}_J \right\rvert^{4\varsigma} \right] \right)^{\frac{1}{4}}.
    \end{align}
    By~\cref{theorem:approximate_filter_bounded_moments}, there exists $C=C\Bigl(\mathcal{M}_{4\varsigma}\bigl(\mu_0^{\kalman}\bigr),J,\kappa_y, \kappa_{\vect \Psi}, \kappa_{\vect h}, \mat \Sigma, \mat \Gamma\Bigr)$ so that
    \[
        \mathbb{E} \left\lvert \overline{u}^{(1)}_J \right\rvert ^{4\varsigma}
        \leq C\quad\text{and}\quad
        \mathbb{E}\left\lvert u^{(1)}_J \right\rvert^{4\varsigma} \leq
            2^{4\varsigma -1} \left( \mathbb{E} \left\lvert \overline{u}^{(1)}_J \right\rvert ^{4\varsigma}
            + \mathbb{E} \left\lvert u^{(1)}_J - \overline{u}^{(1)}_J  \right\rvert ^{4\varsigma}  \right)
        \leq
            2^{4\varsigma -1} \left( C + \mathbb{E} \left\lvert u^{(1)}_J - \overline{u}^{(1)}_J  \right\rvert ^{4\varsigma}  \right).
    \]


    Hence it remains to bound terms of the form $\expect \bigl\lvert u^{(1)}_J - \overline{u}^{(1)}_J \bigr\rvert^{p}$
    with $p = 4$ and $p = 4\varsigma$.
    Such a bound will follow from a \textit{propagation of chaos} result; which will be the object of Step 4. In Step 3 we show auxiliary moment bounds for the mean field dynamics.

    \paragraph{Step 3: moment bounds for the mean field dynamics}%
    \label{par:Moment bound1}
    By~\cref{theorem:approximate_filter_bounded_moments}, for any $j\in \range{0}{J-1}$ there exists a constant $C=C\Bigl(\mathcal{M}_{p}\bigl(\mu_0^{\kalman}\bigr),J,\kappa_y, \kappa_{\vect \Psi}, \kappa_{\vect h}, \mat \Sigma, \mat \Gamma\Bigr)$ so that
    \begin{equation}
        \label{eq:moment_mf1}
        \left( \expect \left\lvert \overline{u}_j^{(1)} \right\rvert^p \right)^{\frac{1}{p}} \leq C.
    \end{equation}
    From this it immediately follows, by~\cref{assumpenum:assumption2,assumpenum:assumption3}, that there exist constants $C_v,C_y$ depending on $\mathcal{M}_{p}\bigl(\mu_0^{\kalman}\bigr),J,\kappa_y, \kappa_{\vect \Psi}, \kappa_{\vect h}, \mat \Sigma, \mat \Gamma$ so that
    \begin{equation}
        \label{eq:moment_vy1}
        \left( \expect \left\lvert \overline{v}^{(1)}_{j+1} \right\rvert^p \right)^{\frac{1}{p}} \leq C_v,
        \qquad
        \left( \expect \left\lvert \overline{y}^{(1)}_{j+1} \right\rvert^p \right)^{\frac{1}{p}} \leq C_y.
    \end{equation}
    Furthermore, it holds that for the Kalman gain that
    \begin{equation}
        \Bigl\lVert \mathcal K\bigl(\mfl_{j+1}\bigr) \Bigr\rVert^p
        \leq
         \left\lVert \mathcal C^{uh}\bigl(\mfl_{j+1}\bigr) \right\rVert^p \left\lVert \Gamma^{-1} \right\rVert^p
         =
        \left\lVert \Gamma^{-1} \right\rVert^p
        \left\lVert \mathcal C^{uu}\bigl(\mfl_{j+1}\bigr) H^\t \right\rVert^p \leq C,
        \label{eq:moment_gain1}
    \end{equation}
    where $C$ depends on $\mathcal{M}_{2}\bigl(\mu_0^{\kalman}\bigr),J,\kappa_y, \kappa_{\vect \Psi}, \kappa_{\vect h}, \mat \Sigma, \mat \Gamma$.

    \paragraph{Step 4: propagation of chaos}%
    \label{par:Step 4. Propagation of chaos}

    In this step,
    we prove a propagation of chaos result stating for all $p$,
    there exists a constant $C_p = C_p\Bigl(\mathcal{M}_{4^J\cdot p}\bigl(\mu_0^{\kalman}\bigr),J,\kappa_y, \kappa_{\Psi}, \kappa_h, \ell_{\Psi}, \ell_h, \Sigma, \Gamma\Bigr)$ such that
    \begin{equation}
        \label{eq:propoagation_of_chaos}
        \sup_{N \in \nat} \sqrt{N}\left( \expect \left\lvert u^{(1)}_J - \overline{u}^{(1)}_J \right\rvert^p \right)^{\frac{1}{p}}
        \leq C_{p}.
    \end{equation}
    To prove this result,
    we follow the strategy in~\cite{le2009large} and reason by induction.
    Fix $j \in \range{0}{J-1}$ and assume that for all $p \in \nat$
    there is $C_{j,p} =C_{j,p}\Bigl(\mathcal{M}_{4^j\cdot p}\bigl(\mu_0^{\kalman}\bigr),J,\kappa_y, \kappa_{\Psi}, \kappa_h, \ell_{\Psi}, \ell_h, \Sigma, \Gamma\Bigr)$ such that
    \begin{equation}
        \label{eq:induction_chaos}
        \sup_{N \in \nat} \sqrt{N}\left( \expect \left\lvert u^{(1)}_j - \overline{u}^{(1)}_j \right\rvert^p \right)^{\frac{1}{p}}
        \leq C_{j,p}.
    \end{equation}
    By Lipschitz continuity of $\Psi$ and $h$,
    we deduce immediately that
    \begin{equation}
        \label{eq:lip_meanfield1}
        \sup_{N \in \nat} \sqrt{N}\left( \expect \left\lvert v^{(1)}_{j+1} - \overline{v}^{(1)}_{j+1} \right\rvert^p \right)^{\frac{1}{p}}
        \leq \ell_{\Psi} C_{j,p},
        \qquad
        \sup_{N \in \nat} \sqrt{N}\left( \expect \left\lvert y^{(1)}_{j+1} - \overline{y}^{(1)}_{j+1} \right\rvert^p \right)^{\frac{1}{p}}
        \leq \ell_{h} \ell_{\Psi} C_{j,p}.
    \end{equation}
    Now we write
    \begin{align*}
        u^{(1)}_{j+1} - \overline{u}^{(1)}_{j+1}
        &=
        \left(v^{(1)}_{j+1} - \overline{v}^{(1)}_{j+1}\right)
        + \mathcal K\bigl(\mfl_{j+1}\bigr) \left(\overline{y}^{(1)}_{j+1} - y^{(1)}_{j+1}\right) \\
        &\qquad + \Bigl( \mathcal K\bigl(\emp_{j+1}\bigr) -  \mathcal K\bigl(\mfl_{j+1}\bigr) \Bigr)\left(y^{\dagger}_{j+1} - y^{(1)}_{j+1}\right).
    \end{align*}
    Thus, by the triangle and H\"older inequalities,
    \begin{align*}
        \sqrt{N} \left( \expect \left\lvert u^{(1)}_{j+1} - \overline{u}^{(1)}_{j+1} \right\rvert^p \right)^{\frac{1}{p}}
        &\leq
        \ell_{\Psi} C_{j,p}
        + \left\lVert \mathcal K\bigl(\mfl_{j+1}\bigr) \right\rVert  \ell_{h} \ell_{\Psi} C_{j,p}
         \\
        &\qquad + \sqrt{N} \left( \expect \left\lVert  \mathcal K\bigl(\emp_{j+1}\bigr) -  \mathcal K\bigl(\mfl_{j+1}\bigr) \right\rVert^{2p} \right)^{\frac{1}{2p}} \left(\expect \left\lvert y^{\dagger}_{j+1} - y^{(1)}_{j+1} \right\rvert^{2p}\right)^{\frac{1}{2p}}.
    \end{align*}
    The first two terms on the right-hand side are bounded uniformly in~$N$ in view of~\eqref{eq:moment_gain1}.
    In order to bound the last term,
    we note that the term
    $
    \expect \bigl\lvert y^{\dagger}_{j+1} - y^{(1)}_{j+1} \bigr\rvert^{2p}
    $
    is bounded uniformly in~$N$ by a constant which depends on $\mathcal{M}_{4^j\cdot 2p}\bigl(\mu_0^{\kalman}\bigr)$; this follows by~\eqref{eq:moment_vy1} and~\eqref{eq:lip_meanfield1}.
    To complete the inductive step, it remains to show that
    \[
        \sup_{N \in \nat}
        \sqrt{N} \left( \expect \left\lVert  \mathcal K\Bigl(\emp_{j+1}\Bigr) -  \mathcal K\Bigl(\mfl_{j+1}\Bigr) \right\rVert^{2p} \right)^{\frac{1}{2p}}
        < \infty.
    \]
    To this end,
    in line with the classical propagation of chaos approach,
    we decompose
    \begin{align}
        \notag
        \sqrt{N}\left( \expect \left\lVert  \mathcal K\Bigl(\emp_{j+1}\Bigr) -  \mathcal K\Bigl(\mfl_{j+1}\Bigr) \right\rVert^{2p} \right)^{\frac{1}{2p}}
        &\leq
        \sqrt{N}\left( \expect \left\lVert  \mathcal K\Bigl(\emp_{j+1}\Bigr) -  \mathcal K\Bigl(\empmfl_{j+1}\Bigr) \right\rVert^{2p} \right)^{\frac{1}{2p}} \\
        &\qquad +
        \sqrt{N}\left( \expect \left\lVert  \mathcal K\Bigl(\empmfl_{j+1}\Bigr) -  \mathcal K\Bigl(\mfl_{j+1}\Bigr) \right\rVert^{2p} \right)^{\frac{1}{2p}},
        \label{eq:two_terms_gain}
    \end{align}
    where we introduced the empirical measure associated with the mean field particles:
    \[
        \empmfl_{j+1} :=
        \frac{1}{N} \sum_{i=1}^{N} \delta_{\left(\overline{v}^{(i)}_{j+1}, \overline{y}^{(i)}_{j+1}\right)}.
    \]
    To conclude the proof of propagation of chaos,
    we must prove that
    \begin{itemize}
        \item
            The first term on the right-hand side of~\eqref{eq:two_terms_gain} is bounded uniformly in~$N$,
            which is achieved by employing the induction hypothesis~\eqref{eq:induction_chaos},
            as well as~\eqref{eq:lip_meanfield1}.
        \item
            The second term on the right-hand side of~\eqref{eq:two_terms_gain} is also bounded uniformly in~$N$.
            This follows from classical arguments based on the law of large number in~$L^p$ spaces.
    \end{itemize}
Let us first bound the second term.
    To this end,
    note that
    \begin{align*}
        \mathcal K\Bigl(\empmfl_{j+1}\Bigr) -  \mathcal K\Bigl(\mfl_{j+1}\Bigr)
        &= \cov^{uh} \left(\mfl_{j+1}\right)
        \left( \left( \Gamma + \cov^{hh}\Bigl(\empmfl_{j+1}\Bigr) \right)^{-1} - \left( \Gamma + \cov^{hh}\Bigl(\mfl_{j+1}\Bigr) \right)^{-1} \right) \\
        &\qquad + \left( \cov^{uh} \Bigl(\empmfl_{j+1}\Bigr) - \cov^{uh} \Bigl(\mfl_{j+1}\Bigr) \right) \left( \Gamma + \cov^{hh}\left(\empmfl_{j+1}\right) \right)^{-1}.
    \end{align*}
    Therefore, using that $A^{-1} - B^{-1} = B^{-1} (B - A) A^{-1}$,
    we have that
    \begin{align*}
        \left\lVert \mathcal K\Bigl(\empmfl_{j+1}\Bigr) -  \mathcal K\Bigl(\mfl_{j+1}\Bigr) \right\rVert
        &\leq \left\lVert \cov^{uh} \Bigl(\mfl_{j+1}\Bigr) \right\rVert
        \lVert \Gamma^{-1} \rVert^2 \left\lVert \cov^{hh}\Bigl(\empmfl_{j+1}\Bigr) - \cov^{hh}\Bigl(\mfl_{j+1}\Bigr) \right\rVert \\
        &\qquad + \lVert \Gamma^{-1} \rVert \left\lVert  \cov^{uh} \Bigl(\empmfl_{j+1}\Bigr) - \cov^{uh} \Bigl(\mfl_{j+1}\Bigr) \right \rVert \\
        &\leq C(\Gamma, H) \Bigl( 1 + \left\lVert \cov^{uu} \Bigl(\mfl_{j+1}\Bigr) \right\rVert  \Bigr) \left\lVert  \cov^{uu} \Bigl(\emp_{j+1}\Bigr) - \cov^{uu} \Bigl(\mfl_{j+1}\Bigr) \right \rVert.
    \end{align*}
    The term $\bigl\lVert \cov^{uu} \bigl(\mfl_{j+1}\bigr) \bigr\rVert$ is bounded by~\eqref{eq:moment_vy1},
    and the $L^{p}$ norm of the second term tends to zero with rate~$N^{-\frac{p}{2}}$ by classical law of large number arguments,
    see for example~\cite[Lemma 3]{MR4234152} and~\cite[Lemma 3]{vaes2024}; indeed it holds that 
    \[\expect\left\lVert  \cov^{uu} \Bigl(\emp_{j+1}\Bigr) - \cov^{uu} \Bigl(\mfl_{j+1}\Bigr) \right \rVert^{2p} \leq \frac{C\Bigl(\mathcal{M}_{4p}\bigl(\mu^{\kalman}_0\bigr)\Bigr)}{N^p}.\]
    It remains to bound the other term.
    Reasoning similarly to above, we have that
    \begin{equation}
        \left\lVert \mathcal K\Bigl(\emp_{j+1}\Bigr) -  \mathcal K\Bigl(\empmfl_{j+1}\Bigr)  \right\rVert
        \leq
        C(\Gamma, H) \Bigl( 1 + \left\lVert \cov^{uu} \Bigl(\empmfl_{j+1}\Bigr) \right\rVert  \Bigr) \left\lVert  \cov^{uu} \Bigl(\emp_{j+1}\Bigr) - \cov^{uu} \Bigl(\empmfl_{j+1}\Bigr) \right \rVert.
    \end{equation}
    By~\cite[Lemma 2]{vaes2024},
    it holds that
    \[
        \left\lVert  \cov^{uu} \Bigl(\emp_{j+1}\Bigr) - \cov^{uu} \Bigl(\empmfl_{j+1}\Bigr) \right \rVert^2
        \leq 2\left( \frac{1}{N} \sum_{i=1}^{N} \left\lvert v^{(i)}_{j+1} - \overline{v}^{(i)}_{j+1} \right\rvert^2 \right)
        \left( \frac{1}{N} \sum_{i=1}^{N} \left\lvert v^{(i)}_{j+1} \right\rvert^2 + \frac{1}{N} \sum_{i=1}^{N} \left\lvert \overline{v}^{(i)}_{j+1} \right\rvert^2 \right).
     \]
    Using the Cauchy-Schwarz and Jensen's inequalities, moment bounds and finally the induction hypothesis~\eqref{eq:propoagation_of_chaos} enables to conclude that
    \begin{equation*}
        \left(\expect \left\lVert  \cov^{uu} \Bigl(\emp_{j+1}\Bigr) - \cov^{uu} \Bigl(\empmfl_{j+1}\Bigr) \right \rVert^{2p}\right)^{\frac{1}{2p}} \leq \frac{C\Bigl(\mathcal{M}_{4^{j+1}p}\bigl(\mu^{\kalman}_0\bigr),J,\kappa_y, \kappa_{\Psi}, \kappa_h, \ell_{\Psi}, \ell_h, \Sigma, \Gamma\Bigr)}{\sqrt{N}}.
    \end{equation*}

    \paragraph{Step 5: concluding the proof}
    Using the propagation of chaos result from~\eqref{eq:propoagation_of_chaos} in~\eqref{eq:first_term} gives that
    \[
        \left(\expect~\Bigl|\frac1N\sum_{i=1}^N \phi\left(u^{(i)}_J \right)-\phi\left(\overline{u}^{(i)}_J \right)\Bigr|^2\right)^{\frac{1}{2}} \leq \frac{C}{\sqrt{N}},
    \]
    for some constant $C$ depending on $\mathcal{M}_{q}\bigl(\mu^{\kalman}_0\bigr),J,L_\phi,\varsigma,\kappa_y, \kappa_{\Psi}, \kappa_h, \ell_{\Psi}, \ell_h, \Sigma, \Gamma$ with $q=\max \{4^{J+1}, 4\varsigma \}$. Combining this result with \eqref{eq:bound_on_D_11} yields the desired result.
   \end{proof}

\end{document}
